\title{Lower Central Series Ideal Quotients Over $\mathbb{F}_p$ and $\mathbb{Z}$}
\newtheorem{theorem}{Theorem}[section]
\newtheorem{fact}{Fact}[section]
\newtheorem{lemma}[theorem]{Lemma}
\newtheorem{proposition}[theorem]{Proposition}
\newtheorem{corollary}[theorem]{Corollary}
\newtheorem{definition}[theorem]{Definition}
\newenvironment{proof}[1][Proof]{\begin{trivlist}
\item[\hskip \labelsep {\bfseries #1}]}{\end{trivlist}}
\newenvironment{remark}[1][Remark]{\begin{trivlist}
\item[\hskip \labelsep {\bfseries #1}]}{\end{trivlist}}
\newcommand{\qed}{\nobreak \ifvmode \relax \else
      \ifdim\lastskip<1.5em \hskip-\lastskip
      \hskip1.5em plus0em minus0.5em \fi \nobreak
      \vrule height0.75em width0.5em depth0.25em\fi}
\titlespacing{\section}{0pt}{*0}{*0}
\titlespacing{\subsection}{0pt}{*0}{*0}
\titlespacing{\subsubsection}{0pt}{*0}{*0}
\author[2,3]{Yael Fregier\thanks{yael.fregier@gmail.com}}
\author[]{Isaac Xia\thanks{isaacxia@college.harvard.edu}}
\affil[2]{Max Planck Institute for Mathematics, Bonn}
\affil[3]{LML, Artois University}
\date{}
\begin{document}
\pagenumbering{gobble}
\maketitle
\onehalfspacing

\begin{abstract}

Given a graded associative algebra $A$,  its lower central series is defined by $L_1 = A$ and $L_{i+1} = [L_i, A]$.  We consider successive quotients $N_i(A) = M_i(A) / M_{i+1}(A)$, where $M_i(A) = AL_i(A) A$.  These quotients are direct sums of graded components.  Our purpose is to describe the $\mathbb{Z}$-module structure of the components; i.e., their free and torsion parts.  Following computer exploration using {\it MAGMA}, two main cases are studied.  The first considers $A  =  A_n / (f_1,\dots, f_m)$, with $A_n$ the free algebra on $n$ generators $\{x_1, \ldots, x_n\}$ over a field of characteristic $p$. The relations $f_i$ are noncommutative polynomials in $x_j^{p^{n_j}},$ for some integers $n_j$.  For primes $p > 2$, we prove that $p^{\sum n_j} \mid \text{dim}(N_i(A))$.  Moreover, we determine polynomials dividing the Hilbert series of each $N_i(A)$.  The second concerns $A = \mathbb{Z} \langle x_1, x_2, \rangle / (x_1^m, x_2^n)$.  For $i = 2,3$, the bigraded structure of $N_i(A_2)$ is completely described.  
\end{abstract}

\section{Introduction}

Algebraic geometry is technically based on commutative algebra as one can reconstruct an affine algebraic variety from its commutative algebra of functions. This suggests to define a noncommutative ``space" via a noncommutative algebra which plays the role of the algebra of functions on this nonexistent space.

This can seem a very daring postulate, but it has proven to be a powerful one. It lies at the heart of the theory of  noncommutative geometry of Alain Connes and  Quantum groups of Vladimir Drinfeld.\\

Feigin and Shoikhet \cite{FS} initiated a new approach to the study of a given noncommutative algebra. Their idea was to approximate it by pieces whose degree of noncommutativity is controlled. This parallels the idea of approaching a function by polynomials in its Taylor expansion. One gains through these ``more commutative" approximations an access to tools of classical geometry.  

To be more precise, the first approximation of a noncommutative algebra $A$ is its abelianization $A_{ab}:=A/A[A,A]A.$ To generalize this construction to higher orders one can consider the {\it lower central series} $(L_i)_{i\in\mathbb{N}}$. It is defined inductively: the first term $L_1$ is  $A$ itself, while the following ones are defined as $L_{i+1}=[A,L_i]$. In particular the abelianization of $A$ can be interpreted as
$A_{ab}=A/A[A,A]A=M_1/M_2,$ where $M_i$ denotes the ideal generated by $L_i$, i.e. $M_i:=AL_iA$. This suggests to define
$N_i:=M_i/M_{i+1}$ as a generalization of $A_{ab}$.  Note that some other papers on the same subject define and study directly $B_i:=L_i/L_{i+1}$, without first forming an ideal.

The innovative work of Feigin and Shoikhet spawned a new line of research.  The structure of $B_i(A)$ was first studied by \cite{FS}, then by \cite{DKME}, \cite{DE}, \cite{AJ}, \cite{BaJ}, \cite{BB}, \cite{BoJ}, and \cite{BEJ+}.  Shortly after came the study of the $N_i(A)$, including papers by \cite{Ker}, \cite{BEJ+}, \cite{JO}, and lastly \cite{CFZ}. 
 
In their paper, \cite{FS} considered $A=A_n(\mathbb{C})$, the free associative algebra on $n$ letters, over the field of complex numbers, but their results remain valid over any field of characteristic zero, in particular over $\mathbb{Q}$. They have discovered that $A/M_3$ can be identified with the algebra $\Omega_{even}(\mathbb{C}^n)$ of even differential forms on $\mathbb{C}^n$ with Fedosov product.  Thus, one can wonder whether there are other incarnations of classical geometric objects hidden in the $N_i(A)$'s.

 This is a difficult question, and a first approach to understand the $N_i(A)$'s is to determine their dimensions.  We do not want to restrict ourselves to free algebras, but consider instead algebras with relations. We work with fields or rings different than $\mathbb{Q}$, for example over the integers $\mathbb{Z}$ or a finite field $k$ of characteristic $p$, as these are more accessible to computer assisted exploration.\\

  In the first section, we consider algebras of the form $A := A_n / (f_1, f_2, \ldots, f_m)$.  We show in Theorem \ref{WNkNiAaction} that $W_n(k)$, the Weyl algebra with divided powers, acts on $N_i(A)$. More generally there is an action of  $W_{n_1}(k)\otimes,..\otimes W_{n_r}(k)$, and one obtains (corollary \ref{NiDivisPN}) that $\text{dim}(N_i(A))$ is divisible by $p^{\sum n_j}$. We also deduce (corollary \ref{HilbertSeriesDivisN}) that the Hilbert series of $N_i(A)$ with respect to the corresponding variables $X_1, \ldots, X_r$ is divisible by 
$(1+X_1 + \cdots + X_1^{p^{n_1} - 1})\cdots(1+X_r + \cdots + X_r^{p^{n_r} - 1}).$

In the second section, we work over $\mathbb{Z}$ and consider algebras of the form $A:=A_2 / (x_1^m,x_2^n)$. We prove that the $\mathbb{Z}$-module structure of $N_2(A)$ and $N_3(A)$ are given by the tables (see notations in \ref{N2} and \ref{gcdDef})
\begin{table} [H] 
\small
\caption{Bigraded Description of $N_2 (A)$}
\label{Bigraded Description of $N_2 (A)$}
\begin{tabular}{|l||l|l|l|ll|l|l|}
\hline
$(m,n)$ & $0$ & $1$ & $2$ & $\cdots$ & $\cdots$ & $n-1$ & $n$ \\
\hline
\hline
$0$ & $0$ & $\cdots$ & $\cdots$ & $\cdots$ & $\cdots$ & $\cdots$ & $\cdots$  \\
\hline
$1$ & $\vdots$ & $\mathbb{Z}$ & $\mathbb{Z}$ & $\cdots$ & $\cdots$ & $\mathbb{Z}$ & $\mathbb{Z}_n$ \\
\hline		s
$2$ & $\vdots$ & $\mathbb{Z}$ & $\mathbb{Z}$ &$\cdots$ & $\cdots$ & $\mathbb{Z}$ & $\mathbb{Z}_n$  \\
\hline
$\ \vdots$ & $\vdots$ & $\vdots$ & $\vdots$ & $\ddots$ &  & $\vdots$ & $\vdots$  \\

$\ \vdots$ & $\vdots$ & $\vdots$ & $\vdots$ & & $\ddots$ & $\vdots$ & $\vdots$   \\
\hline
$m-1$ & $\vdots$ & $\mathbb{Z}$ & $\mathbb{Z}$ & $\cdots$ & $\cdots$ & $\mathbb{Z}$ & $\mathbb{Z}_n$\\ \hline
$m$ & $\vdots$ & $\mathbb{Z}_m$ & $\mathbb{Z}_m$ & $\cdots$ & $\cdots$ & $\mathbb{Z}_m$ & $\mathbb{Z}_{(m,n)}$  \\
\hline

\end{tabular}
\end{table}

and

\begin{table} [H] 
\small
\caption{Bigraded Description of $N_3 (A)$}
\label{Bigraded Description of $N_3 (A)$}
\begin{tabular}{|l||l|l|l|ll|l|l|l|}
\hline
$(m,n)$ & $0$ & $1$ & $2$ & $\cdots$ &  $\cdots$ & $n-1$ & $n$ & $n+1$  \\
\hline
\hline
$0$ & $0$ & $\cdots$ & $\cdots$ & $\cdots$ & $\cdots$ & $\cdots$ & $\cdots$ & $\cdots$ \\
\hline
$1$ & $\vdots$ & $0$ & $\mathbb{Z}$ & $\cdots$ & $\cdots$ & $\mathbb{Z}$ & $\mathbb{Z}$ & $\mathbb{Z}_{f(n)}$\\
\hline
$2$ & $\vdots$ & $\mathbb{Z}$ & $\mathbb{Z}^3$ & $\cdots$ & $\cdots$ & $\mathbb{Z}^3$ & $\mathbb{Z}^2 \oplus \mathbb{Z}_n$ & $\mathbb{Z}_n \oplus \mathbb{Z}_{f(n)}$  \\
\hline
$\ \vdots$ & $\vdots$ & $\vdots$ & $\vdots$ & $\ddots$ & & $\vdots$ & $\vdots$ & $\vdots$  \\

$\ \vdots$ & $\vdots$ & $\vdots$ & $\vdots$ & & $\ddots$ & $\vdots$ & $\vdots$ & $\vdots$ \\
\hline
$m-1$ & $\vdots$ & $\mathbb{Z} $ & $\mathbb{Z}^3$ & $\cdots$ & $\cdots$ & $\mathbb{Z}^3$ & $\mathbb{Z}^2 \oplus \mathbb{Z}_n$ & $\mathbb{Z}_n \oplus \mathbb{Z}_{f(n)}$ \\ \hline
$m$ & $\vdots$ & $\mathbb{Z}$ & $\mathbb{Z}^2 \oplus \mathbb{Z}_m$ & $\cdots$ & $\cdots$ & $\mathbb{Z}^2 \oplus \mathbb{Z}_m$ & $\mathbb{Z}_m \oplus \mathbb{Z}_n$ & $\mathbb{Z}_{f(n)} \oplus \mathbb{Z}_{(m,n)}$\\
\hline
$m+1$ & $\vdots$ & $\mathbb{Z}_{f(m)}$ & $\mathbb{Z}_m \oplus \mathbb{Z}_{f(m)}$ & $\cdots$ & $\cdots$ & $\mathbb{Z}_m \oplus \mathbb{Z}_{f(m)}$ & $\mathbb{Z}_{f(m)} \oplus \mathbb{Z}_{(m,n)}$ & $\mathbb{Z}_{(m,n)}$ \\
\hline
\end{tabular}
\end{table}

 We give an explicit basis of the non-torsion part and also compute the torsion in terms of $m$ and $n$.\\

\section{Divisibility of Total Dimensions in characteristic $p$}
\label{F_p section}

The main tool of this section, Proposition
\ref{WNkDivispN}, states that any finite dimensional module over $\otimes_j W_{n_j}(k)$, a sub algebra of the Weyl algebra with divided power structure, has dimension divisible by all $p^{n_j}.$ We show in Theorem \ref{WNkNiAaction}  that $N_i(A)$ can be equipped with an action of $\otimes_j W_{n_j}(k)$, and as a corollary, one obtains (corollary \ref{NiDivisPN}) that $\text{dim}(N_i(A))$ is divisible by all $p^{n_j}$.

\subsection{Weyl algebra with divided powers}\label{sectWeyl}

We first recall the definition \ref{Weyl algebra with divided powers} of the algebra  $W(k)$ and then give in lemma \ref{DpiGeneratesW*} a system of generators in order to formulate the definition \ref{Wn} of  $W_n(k)$. 

\begin{definition}
\label{Weyl algebra with divided powers}
The {\bf Weyl algebra with divided powers} over $\mathbb{Z}$, $W (\mathbb{Z})$, is the algebra of linear operators of the form$$\sum_{i,j} a_{ij} x^i \frac{D^j}{j!},$$
where $D := \dfrac{\partial}{\partial x}$ and the coefficients $a_{ij}$ are in $\mathbb{Z}.$ For a commutative ring $R$, one defines $W (R) := W (\mathbb{Z}) \otimes R$.
\end{definition}

  Note that the elements of $W(\mathbb{Z})$ define endomorphisms of $\mathbb{Z}[x]$ despite the denominators. If we denote  $D_j:= D^j/j!$ it is clear that $x$, together with $D_j$ for all non-negative $j$ generate $W (\mathbb{Z})$.  Also one has:
\begin{equation}
\label{Div eq1}
D_j D_r = \dfrac{D^j D^r}{j! r!} = \frac{(j+r)!}{j! r!} \frac{D^{j+r}}{(j+r)!} = \binom{j+r}{j} D_{j+r}.
\end{equation}

 From now on, $R$ will be a field $k$ of characteristic $p$.
We have a well-known Lemma:

\begin{lemma}
\label{DpiGeneratesW*}
If $k$ has characteristic $p$, the algebra generated by $D_j$ for all non-negative $j$ is also generated by $D_{p^i}$ for all non-negative $i$.  More precisely, if $a$ is a non-negative integer with representation $a = a_n p^n + \cdots + a_0$ in base $p$, we have

\begin{equation}
\label{Weyl eq 2}
D_a =  \dfrac{1}{C}\prod_s (D_{p^s})^{a_s}, \text{ with } C = \prod_s (a_s!).
\end{equation}
\end{lemma}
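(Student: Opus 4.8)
The plan is to prove Lemma~\ref{DpiGeneratesW*} by a direct computation using the multiplication rule \eqref{Div eq1} together with a classical fact about binomial coefficients modulo $p$. First I would observe that it suffices to establish the explicit formula \eqref{Weyl eq 2}, since that formula visibly exhibits $D_a$ as a polynomial (indeed a scalar multiple of a monomial) in the $D_{p^s}$; conversely each $D_{p^s}$ is trivially in the algebra generated by all the $D_j$, so the two generated subalgebras coincide. The one subtlety is that the constant $C = \prod_s (a_s!)$ must be invertible in $k$: since each digit $a_s$ satisfies $0 \le a_s \le p-1$, the factorial $a_s!$ is a product of integers strictly less than $p$, hence a nonzero element of $k$, so $C$ is a unit and the division in \eqref{Weyl eq 2} makes sense.

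Next I would prove \eqref{Weyl eq 2} itself. Iterating \eqref{Div eq1}, for any nonnegative integers $r_1, \dots, r_t$ one gets $D_{r_1} D_{r_2} \cdots D_{r_t} = \frac{(r_1 + \cdots + r_t)!}{r_1!\cdots r_t!}\, D_{r_1 + \cdots + r_t}$, i.e. the multinomial coefficient $\binom{r_1+\cdots+r_t}{r_1,\dots,r_t}$ times $D_{r_1+\cdots+r_t}$; this follows by an easy induction on $t$ from the two-term identity. Applying this with the multiset of exponents consisting of $a_s$ copies of $p^s$ for each $s$ (whose total is $\sum_s a_s p^s = a$), we obtain
\[
\prod_s (D_{p^s})^{a_s} \;=\; \binom{a}{\,a_0 \text{ copies of }1,\ \dots,\ a_n \text{ copies of } p^n\,}\, D_a .
\]
So \eqref{Weyl eq 2} reduces to the claim that this multinomial coefficient equals $C = \prod_s (a_s!)$ in $k$, equivalently that it is congruent to $\prod_s (a_s!)$ modulo $p$.

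The heart of the argument is therefore this congruence, which is a standard consequence of Kummer's theorem or of Lucas' theorem on carries in base $p$. The multinomial coefficient $\binom{a}{\cdot}$ counts ordered partitions of $a$ objects into blocks, $a_s$ of size $p^s$; writing it as an alternating product of ordinary binomials, each binomial $\binom{a_s p^s + a_{s+1} p^{s+1} + \cdots}{a_s p^s}$ involves, in base $p$, no carries (the lower number has digit $a_s$ in position $s$ and zeros elsewhere, while the upper number has digit at least $a_s$ there), so by Kummer's theorem it is a $p$-adic unit; and a short calculation with Lucas' theorem identifies its residue. Assembling these residues across all $s$ telescopes to $\prod_s (a_s!)$ modulo $p$. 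I expect this number-theoretic bookkeeping — carefully matching the base-$p$ digits so that Kummer/Lucas applies and the residues multiply to exactly $\prod_s a_s!$ — to be the main obstacle, though it is entirely routine; everything else (the induction giving the multinomial rule, the unit check for $C$, the two-way containment of subalgebras) is immediate. Once \eqref{Weyl eq 2} is in hand the lemma follows at once.
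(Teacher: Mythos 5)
Your proposal is correct and follows essentially the same route as the paper's proof: iterate the product rule \eqref{Div eq1} to express $\prod_s (D_{p^s})^{a_s}$ as a binomial/multinomial coefficient times $D_a$, then evaluate that coefficient modulo $p$ via Lucas's theorem (the paper just packages the computation as two separate steps --- splitting $a$ into digit blocks, where the coefficient is $1 \bmod p$, and then handling $(D_{p^s})^{a_s} = a_s!\,D_{a_s p^s}$ --- whereas you reduce one multinomial coefficient in a single telescoping pass). Your explicit check that $C=\prod_s a_s!$ is a unit in $k$ is a small point the paper leaves implicit.
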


\begin{proof}
One can write $a$ as the sum of $2$ elements $b$ and $c$, in a compatible way with its decomposition in basis $p$:
$$a=\underset{b}{\underbrace{a_np^n+\dots+a_kp^k}}+\underset{c}{\underbrace{a_{k-1}p^{k-1}+\dots+a_0}}.$$
We claim that $$D_a=D_bD_c.$$
According to eq. (\ref{Div eq1}), we already know that
$${a \choose b}D_a=D_bD_c.$$
Therefore it suffices to prove, that ${a \choose b}=1 \pmod{p}.$

Let us recall Lucas's Theorem: for all non-negative integers $m,n$ and prime $p$, we have
\begin{equation}
\label{lucas}
\binom{m}{n} \equiv \prod_{i=0}^k \binom{m_i}{n_i} \pmod{p},
\end{equation}
where $m = \sum_{i = 0}^k m_i p^i$ and $n = \sum_{i=0}^k n_i p^i$.
In our setting:
$${a \choose b}=\Pi_s  {a_s \choose b_s} \pmod{p}.$$
But we can decompose this product into two products (for $s\leq k-1$ and for $s>k-1$) and use the remark that by definition of $b$,
$b_s=\begin{cases}
a_s & \text{if $s>k-1$},\\
0  & \text{otherwise.}
\end{cases}$

In other words (mod $p$):
\begin{eqnarray*}
{a \choose b} & = & \Pi_s  {a_s \choose b_s}\\
                         & = & \Pi_{s> k-1}  {a_s \choose b_s}\Pi_{s\leq k-1}  {a_s \choose b_s}\\
                         & = & \Pi_{s> k-1}  {a_s \choose a_s}\Pi_{s\leq k-1}  {a_s \choose 0}\\
                         & = & 1.
\end{eqnarray*}
Iterating this result, one gets \begin{equation}\label{prod} D_a= \Pi_s D_{a_s p^s}.\end{equation}

We now want to prove by induction that \begin{equation}\label{pow}\alpha !D_{\alpha p^i}= (D_{p^i})^\alpha.\end{equation}
By eq. (\ref{Div eq1}),
$${\alpha p^i \choose p^i}D_{\alpha p^i }=D_{(\alpha-1) p^i }D_{p^i },$$
so we are looking for the expression of ${\alpha p^i \choose p^i}.$
But Lukas' theorem gives $${\alpha p^i \choose p^i}={\alpha  \choose 1}=\alpha,$$ which completes the induction step.  $\square$
\end{proof}

Thus, $W(k)$ is generated by $x$ and $D_{p^i}$ for all $i \ge 0$. 

\begin{definition}\label{Wn}
 Denote by $W_n (k)$ the subalgebra generated by $x$ and $D_{p}, \cdots, D_{p^{n-1}}$.  By Lemma \ref{DpiGeneratesW*}, it is generated by $x$ and all $D_j$ with $j < p^n$.  
 \end{definition}
 
 For example, $W_1(k)$ is generated by $x$ and $D$ with relations $[D,x] = 1$ and $D^p = 0$.

We will need the following lemma in the proof of proposition \ref{WNkDivispN}.

\begin{lemma}
\label{WNkDivispN algebra}
For $j <p^n$, all $D_j \in W_n (k)$ are nilpotent. Moreover $x^{p^n}$ is central in this algebra.
\end{lemma}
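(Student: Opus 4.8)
The plan is to prove both claims by reducing to explicit computations on the generators $x, D_p, D_{p^2}, \ldots, D_{p^{n-1}}$ of $W_n(k)$, using the two structural relations available in characteristic $p$: the commutation formula $[D, x] = 1$ (more generally $[D_j, x]$ is expressible via $D_{j-1}$) and the multiplication rule $D_j D_r = \binom{j+r}{j} D_{j+r}$ from equation (\ref{Div eq1}).

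For the nilpotence of $D_j$ with $j < p^n$: since $j < p^n$, its base-$p$ expansion only involves digits in positions $0, \ldots, n-1$, so by Lemma \ref{DpiGeneratesW*} it suffices to treat the generators $D_{p^i}$ for $0 \le i \le n-1$. I would show $(D_{p^i})^p = 0$ directly: by the iterated form of (\ref{Div eq1}), $(D_{p^i})^p$ is a scalar multiple of $D_{p \cdot p^i} = D_{p^{i+1}}$, and the scalar is $\binom{2p^i}{p^i}\binom{3p^i}{p^i}\cdots\binom{p \cdot p^i}{p^i} = \frac{(p^{i+1})!}{(p^i!)^p}$ (equivalently, by Lucas/Kummer, one checks this multinomial coefficient is $\equiv 0 \pmod p$ because carrying occurs when adding $p^i$ to itself $p$ times). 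Hence $(D_{p^i})^p = 0$ in $W_n(k)$, and since $W_n(k)$ is generated by elements that are nilpotent modulo... — more carefully, I would argue that every $D_j$ with $j < p^n$ lies in the augmentation-type ideal $I$ generated by the $D_{p^i}$, and using the grading/filtration of $W(k)$ by the "$D$-degree" (the operator $D_j$ lowers polynomial degree by $j$), any product of $p^n$ or more of the $D_{p^i}$ must collapse: a monomial $x^a \prod (D_{p^{i}})^{e_i}$ with $\sum e_i p^i \ge p^n$ annihilates $\mathbb{Z}[x]$ after... Actually the cleanest route: order the generators, use (\ref{Weyl eq 2}) to write $D_j$ as a scalar times $\prod_s (D_{p^s})^{a_s}$, and show that raising to a large enough power forces some $(D_{p^s})^p$ factor to appear after commuting, which vanishes. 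So $D_j^{N} = 0$ for $N$ large (one can take $N = p^n$).

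For the centrality of $x^{p^n}$: it suffices to check that $x^{p^n}$ commutes with each algebra generator. It obviously commutes with $x$. For $D_{p^i}$ with $i \le n-1$, I would use that $D_{p^i}$ acting on $\mathbb{Z}[x]$ sends $x^a \mapsto \binom{a}{p^i} x^{a - p^i}$, so the commutator $[D_{p^i}, x^{p^n}]$ is the operator $x^a \mapsto \left(\binom{a + p^n}{p^i} - \binom{a}{p^i}\right) x^{a + p^n - p^i}$; by Lucas's Theorem the digit of $a + p^n$ and of $a$ in positions $0, \ldots, n-1$ agree (adding $p^n$ only touches position $n$ and above, with no carry into the low digits relevant to a $\binom{\cdot}{p^i}$ with $i < n$), so $\binom{a+p^n}{p^i} \equiv \binom{a}{p^i} \pmod p$ and the commutator vanishes in $W_n(k)$. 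Since $\{x\} \cup \{D_{p^i} : i < n\}$ generate $W_n(k)$, $x^{p^n}$ is central.

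The main obstacle is the nilpotence statement: showing $(D_{p^i})^p = 0$ is a quick Lucas/Kummer computation, but upgrading this to "every element of $W_n(k)$ built from the $D_j$'s is nilpotent" requires keeping track of how commutators with $x$ interact with the $D$-grading — i.e., arguing that in a sufficiently long product of low-order $D$'s (possibly interspersed with powers of $x$) one can always maneuver a $(D_{p^i})^p$ into existence, or else the product already vanishes on $\mathbb{Z}[x]$ for degree reasons. I would formalize this via the PBW-type normal form for $W(k)$ (monomials $x^a D_{p^{n-1}}^{e_{n-1}} \cdots D_p^{e_1} D^{e_0}$) together with the faithfulness of the action on $k[x]$, which makes the degree-counting argument rigorous.
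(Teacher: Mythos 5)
Your argument is correct and follows essentially the same route as the paper: $(D_{p^i})^p$ is $p!\,D_{p^{i+1}}=0$ by Lucas, the general $D_j$ with $j<p^n$ is handled through its base-$p$ factorization from Lemma \ref{DpiGeneratesW*}, and centrality of $x^{p^n}$ is checked on the generators $D_{p^i}$ by evaluating the commutator on monomials and applying Lucas to $\binom{a+p^n}{p^i}-\binom{a}{p^i}$. The ``main obstacle'' you identify is illusory: the lemma asserts only that each individual operator $D_j$ with $j<p^n$ is nilpotent, not that arbitrary products of $D$'s interspersed with powers of $x$ are, and since all the $D_a$ commute with one another by equation (\ref{Div eq1}), one gets directly $D_j^p=\prod_s (D_{j_s p^s})^p=0$ (so $N=p$ suffices, and no PBW normal form, $D$-degree filtration, or faithfulness argument is needed) --- this is exactly what the paper does.
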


\begin{proof}
To show that all $D_j$ are nilpotent, we first show that all $D_{p^i}$ are nilpotent.

Since $D_{(m-1)p^i} D_{p^i} \stackrel{(\ref{Div eq1})}{=} \binom{mp^i}{p^i} D_{m p^i}$, an induction with Lucas's theorem $\binom{mp^i}{p^i} \stackrel{(\ref{lucas})}{=} \binom{m}{1} = m$ shows that $m!D_{mp^i} = D_{p^i}^m$.
In particular, for $m = p$, we have that
\begin{equation}
\label{Nilpotent D_p^i}
D_{p^i}^p = p!D_{p^{i+1}} = 0.
\end{equation}

For arbitrary $0 \le j < p^n$, we have by the proof of Lemma \ref{DpiGeneratesW*} that
$$D_j^p \stackrel{(\ref{prod})}{=}  \left( \prod_s D_{j_s p^s} \right)^p= \prod_s (D_{j_s p^s})^p.$$
  It remains to show that one of the terms in this product vanishes. Choosing any term in the product and noting that since all $j_s < p$, $j_s! \ne 0$, one has
\begin{equation}
\label{Nilpotent D_j}
D_{j_s p^s}^p \stackrel{(\ref{pow})}{=} \left( \dfrac{D_{p^s}^{j_s}}{j_s!} \right)^p  = \dfrac{(D_{p^s}^{j_s})^p}{j_s!^p} = \dfrac{(D_{p^s}^p)^{j_s}}{j_s!^p} \stackrel{(\ref{Nilpotent D_p^i})}{=}  \dfrac{0^{j_s}}{j_s!^p} = 0.
\end{equation}
Thus, we have shown that all $D_j$ are nilpotent.

It is clear that $x^{p^n}$ commutes with $x$. We now show that it commutes with $D_j$ as well. According to lemma \ref{DpiGeneratesW*} it suffices to show it for $D_{p^i}$, with $p^i < p^n$.  To this end, note that
$$[D_{p^i}, x^{p^n}] x^{\ell} = D_{p^i}(x^{p^n} x^{\ell}) - x^{p^n} (D_{p^i} x^{\ell})  = \binom{p^n + \ell}{p^i} x^{p^n + \ell - p^i} - \binom{\ell}{p^i} x^{p^n + \ell - p^i}.$$
Now, we show that $0 = (\binom{p^n + \ell}{p^i} - \binom{\ell}{p^i})$.  But, by Lucas's Theorem we have that
$$\binom{p^n + \ell}{p^i} - \binom{\ell}{p^i} = \binom{1}{0} \prod_{s}^{N-1} \binom{\ell_{s}}{p^{i}_s} - \prod_s^{N-1} \binom{\ell_s}{p^{i}_s} = 0.\  \square$$

\end{proof}

\subsection{Divisibility of dimensions of $\otimes_j W_{n_j}(k)$-modules}\label{tool}

We first recall that the tensor product $A\otimes B$ of two associative algebras $A$ and $B$ is also an associative algebra for the product $(a\otimes b)\cdot (a'\otimes b'):=a\cdot a'\otimes b\cdot b'$. One has a canonical injection of $A$ into $A\otimes B$ sending $a$ to $a\otimes 1_B$ (similarly for $B$). 
\begin{remark}One immediately see that elements of $A$ commute with those of $B$ in $A\otimes B$.
\end{remark}
We can therefore consider the associative algebra $\otimes_j W_{n_j}(k)$ which is generated by the elements
$$D_{i j}:= \underset{i-1}{\underbrace{1\otimes  \dots 1}}\otimes D_j\otimes 1\dots \otimes 1$$ and
$$x_{i }:= \underset{i-1}{\underbrace{1\otimes  \dots 1}}\otimes x \otimes 1\dots \otimes 1.$$

We start by stating a useful lemma

\begin{lemma}\label{commutenil}
Let $N_i$ be a finite family of commuting nilpotent endomorphisms of a vector space $V$, then there exists a non zero vector $v \in V$ annihilated by all the $N_i$'s.
\end{lemma}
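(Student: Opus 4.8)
The plan is to prove Lemma \ref{commutenil} by induction on the number $r$ of endomorphisms in the family $N_1, \dots, N_r$. The base case $r=1$ is immediate: a nilpotent endomorphism $N_1$ has $\ker N_1 \neq 0$, since if $N_1^k = 0$ with $k$ minimal and $k \geq 1$, any nonzero vector in the image of $N_1^{k-1}$ is killed by $N_1$ (and if $k=0$ then $V=0$, which we exclude, or rather $N_1 = 0$ and any nonzero vector works, assuming $V \neq 0$).

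For the inductive step, suppose the statement holds for any family of $r-1$ commuting nilpotent endomorphisms of any vector space. Given $N_1, \dots, N_r$, first apply the base-case reasoning to $N_r$: the subspace $W := \ker N_r$ is nonzero. The key observation is that $W$ is invariant under each $N_i$ for $i < r$: if $N_r v = 0$, then $N_r(N_i v) = N_i(N_r v) = 0$ because $N_i$ and $N_r$ commute, so $N_i v \in W$. Thus $N_1, \dots, N_{r-1}$ restrict to endomorphisms of $W$; they still commute, and they are still nilpotent (a restriction of a nilpotent operator to an invariant subspace is nilpotent, with nilpotency index no larger). Applying the induction hypothesis to the family $N_1|_W, \dots, N_{r-1}|_W$ on the nonzero space $W$ yields a nonzero $v \in W$ annihilated by all of $N_1, \dots, N_{r-1}$; since $v \in W = \ker N_r$, it is also annihilated by $N_r$, completing the induction.

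I do not expect a serious obstacle here — the argument is the standard one for simultaneous triangularization of commuting operators, restricted to the weaker conclusion of a common eigenvector (kernel vector). The only points requiring a line of care are: (i) checking that $\ker N_r$ is genuinely nonzero, which uses that $N_r$ is nilpotent and $V \neq 0$ (implicitly assumed, since the conclusion asserts the existence of a nonzero vector); and (ii) verifying the invariance of $\ker N_r$ under the other $N_i$, which is exactly where commutativity enters. If one wanted to avoid explicit mention of whether $V=0$, one can phrase the lemma as: for $V \neq 0$ there is such a $v$; this is the intended reading given its use in Proposition \ref{WNkDivispN}, where it will be applied to a nonzero finite-dimensional module.

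Note that finite-dimensionality of $V$ is not actually needed for the argument as stated, only that the family is finite; but stating it as in the lemma is harmless. In the application, $V$ will be a nonzero finite-dimensional $\otimes_j W_{n_j}(k)$-module, and the $N_i$ will be the (commuting, by the preceding remark, and nilpotent, by Lemma \ref{WNkDivispN algebra}) actions of the generators $D_{ij}$; the common kernel vector $v$ will then generate a one-dimensional quotient-type structure used to stratify $V$ and force the divisibility by $p^{n_j}$.
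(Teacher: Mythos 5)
Your proof is correct and is essentially the same induction on the number of operators that the paper uses: the paper's inductive step applies the highest nonvanishing power of the new operator $N_{k+1}$ to an existing common null vector $v_k$ and uses commutativity to check the earlier operators still kill the result, whereas you restrict the remaining operators to the invariant subspace $\ker N_r$ and recurse --- the two formulations are interchangeable and invoke commutativity at the same single point. Your side remarks (that only finiteness of the family, not finite-dimensionality of $V$, is needed, and that $V\neq 0$ is implicitly assumed) are accurate.
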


\begin{proof}
Our base case is true: as $N_1$ is nilpotent, for any $v$, there exists a certain power $s$ for which $v_1:=N_1^sv_k$ is nonzero, but $N_1^{s+1}v_k$ vanishes, so one has $N_1(v_1)=0.$
Now, suppose that $N_1, \ldots, N_k$ all share a common null vector $v_k \in V$.
Since $N_{k+1}$ is nilpotent, there exists some integer $\ell$ such that $N_{k+1}^{\ell} (v_k) = 0$ and $v_{k+1}:= N_{k+1}^{\ell - 1} (v_k)\neq 0$. In particular  $N_{k+1} (v_{k+1}) = 0$.  Additionally, for any $j \le k$, we have $N_j (v_{k+1}) = N_j (N_{k+1}^{\ell - 1} v_k) = N_{k+1}^{\ell - 1} N_j (v_k) = 0,$ so our induction is done.
\end{proof}

For the rest of this section, we assume that  $k$ is algebraically closed.

\begin{lemma}
\label{WNkDivispN module}
Let $V$ be a $\otimes_j W_{n_j}(k)$ module.  Then all of the $D_{ij}$ share a common null vector $v \in V$. Moreover, if  $V$ is irreducible, each of $x_j^{p^{n_j}}$ act by corresponding scalars $s_j \in k$.
\end{lemma}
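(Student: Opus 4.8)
The plan is to prove the two assertions separately, using Lemma \ref{WNkDivispN algebra} and Lemma \ref{commutenil} for the first, and Schur's Lemma (which requires $k$ algebraically closed, as assumed) for the second.

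For the first assertion, I would observe that the $D_{ij}$ form a finite commuting family of endomorphisms of $V$. They commute because $D_{ij}$ and $D_{i'j'}$ with $i \ne i'$ live in different tensor factors (by the Remark, elements of distinct tensor factors commute), and $D_{ij}, D_{ij'}$ with the same $i$ commute because both $D_j, D_{j'}$ lie in the commutative subalgebra of $W_{n_i}(k)$ generated by the $D_{p^s}$'s (indeed, by eq. (\ref{Div eq1}) all $D_j$ commute with each other). Each $D_{ij}$ is nilpotent: by Lemma \ref{WNkDivispN algebra}, every $D_j$ with $j < p^{n_i}$ is nilpotent in $W_{n_i}(k)$, hence $D_{ij} = 1 \otimes \cdots \otimes D_j \otimes \cdots \otimes 1$ is nilpotent in $\otimes_j W_{n_j}(k)$ and therefore acts nilpotently on $V$. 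Since $V$ is finite-dimensional (a finite family of nilpotents on a finite-dimensional space is exactly the setting of Lemma \ref{commutenil}), Lemma \ref{commutenil} yields a nonzero $v \in V$ annihilated by all the $D_{ij}$.

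For the second assertion, suppose $V$ is irreducible. By Lemma \ref{WNkDivispN algebra}, $x^{p^{n_j}}$ is central in $W_{n_j}(k)$, and since elements of one tensor factor commute with all others, $x_j^{p^{n_j}}$ is central in $\otimes_j W_{n_j}(k)$. Hence multiplication by $x_j^{p^{n_j}}$ is a $\otimes_j W_{n_j}(k)$-module endomorphism of $V$. By Schur's Lemma for modules over an algebra over an algebraically closed field — here one should note $V$ is finite-dimensional, so $\mathrm{End}_{\otimes_j W_{n_j}(k)}(V)$ is a finite-dimensional division algebra over $k$ and hence equals $k$ — this endomorphism is a scalar $s_j \in k$, i.e. $x_j^{p^{n_j}}$ acts as $s_j$.

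The only genuine subtlety, and the step I would be most careful about, is verifying the commutativity of the full family $\{D_{ij}\}$: one must separately handle same-tensor-factor pairs (using that all $D_j$ commute inside a single $W_{n_i}(k)$, a consequence of eq. (\ref{Div eq1})) and different-tensor-factor pairs (using the Remark). Everything else is a direct citation of the preceding lemmas, plus the standard Schur argument; no lengthy computation is needed. One should also make explicit the standing hypothesis that $V$ is finite-dimensional, which the paper's framing ($N_i(A)$ finite-dimensional) supplies and which both Lemma \ref{commutenil} and the Schur step implicitly use.
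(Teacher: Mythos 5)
Your proposal is correct and follows essentially the same route as the paper's own proof: commutativity of the $D_{ij}$ (within a factor and across factors) plus nilpotency from Lemma \ref{WNkDivispN algebra} feeding into Lemma \ref{commutenil}, and then centrality of $x_j^{p^{n_j}}$ plus Schur's lemma for the scalar action. Your added care about the two commutativity cases and about finite-dimensionality only makes explicit what the paper leaves implicit.
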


\begin{proof}
The $D_{ij} $'s commute with each other by definition for different $j's$ and by the above remark for different $i's$. They are nilpotent by Lemma \ref{WNkDivispN algebra}.  Lemma  \ref{commutenil} therefore applies.

In addition, since each $x_j^{p^{n_j}}$ is central in $\otimes_j W_{n_j}(k)$, and since $V$ is an irreducible $\otimes_j W_{n_j}(k)$-module, Schur's lemma asserts that $x_j^{p^{n_j}}$ acts by multiplication by a scalar. $\square$
\end{proof}

This lemma enables to derive the main result of this section:
\begin{proposition}
\label{WNkDivispN}

Any finite dimensional module over $W_N:=\bigotimes_j W_n(k)$ has dimension divisible by  $p^{\sum n_j}.$
\end{proposition}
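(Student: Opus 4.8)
The plan is to argue by induction on $\dim V$, reducing to the irreducible case via Lemma \ref{WNkDivispN module} and a composition-series (Jordan--Hölder) argument. Concretely, if $V$ is an arbitrary finite-dimensional $W_N$-module, choose a submodule chain $0 = V_0 \subset V_1 \subset \cdots \subset V_\ell = V$ with each $V_t / V_{t-1}$ irreducible; since $\dim V = \sum_t \dim(V_t/V_{t-1})$, it suffices to prove $p^{\sum n_j} \mid \dim W$ for every irreducible $W_N$-module $W$. So the crux is the irreducible case.

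For the irreducible case, I would use Lemma \ref{WNkDivispN module}: on an irreducible $W$, each central element $x_j^{p^{n_j}}$ acts by a scalar $s_j \in k$. The key observation is then that $W$ becomes a module over the finite-dimensional quotient algebra $\overline{W_N} := \bigotimes_j \bigl(W_{n_j}(k)/(x^{p^{n_j}} - s_j)\bigr)$. Each factor $W_{n_j}(k)/(x^{p^{n_j}} - s_j)$ has dimension exactly $p^{n_j} \cdot p^{n_j} = p^{2n_j}$: as a $k$-vector space $W_{n_j}(k)$ has basis $\{x^a D_{p^{i_1}}\cdots : a \ge 0,\ \text{exponents} < p\}$, but once $x^{p^{n_j}}$ is a scalar the powers of $x$ are reduced mod $p^{n_j}$, leaving a $k$-basis $\{x^a \prod_{s<n_j}(D_{p^s})^{b_s} : 0 \le a < p^{n_j},\ 0 \le b_s < p\}$, using Lemma \ref{DpiGeneratesW*} to handle the divided-power generators and equation (\ref{Nilpotent D_p^i}) to see $(D_{p^s})^p = 0$ bounds each $b_s$. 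Thus $\dim_k \overline{W_N} = \prod_j p^{2n_j} = p^{2\sum n_j}$. Now I would compute the dimension of $W$ by a counting/orbit argument: the left regular representation $\overline{W_N}$ of dimension $p^{2\sum n_j}$ decomposes, and one shows each irreducible appears in a way that forces $p^{\sum n_j} \mid \dim W$ — most cleanly by exhibiting that $\overline{W_N}$ acting on $W$ is "large enough," i.e. that $W$ is a free module over the commutative subalgebra $k[x_1,\dots,x_r]/(x_j^{p^{n_j}}-s_j)$, whose dimension is $p^{\sum n_j}$.

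The cleanest route to that last point: let $v \in W$ be a common null vector for all $D_{ij}$, guaranteed by Lemma \ref{WNkDivispN module}. Consider the subspace $U := \bigl(k[x_1,\dots,x_r]\bigr)\cdot v$, which by centrality/scalar action of the $x_j^{p^{n_j}}$ is spanned by $\{x_1^{a_1}\cdots x_r^{a_r} v : 0 \le a_j < p^{n_j}\}$, so $\dim U \le p^{\sum n_j}$. One checks these monomials are linearly independent — if not, apply a minimal-degree relation and hit it with an appropriate $D_{ij}$ (which lowers $x_i$-degree by $1$ while killing the bottom term, using the commutator computations from the proof of Lemma \ref{WNkDivispN algebra}) to get a shorter relation, a contradiction. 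Hence $\dim U = p^{\sum n_j}$. Finally, since $W$ is irreducible it is generated by $v$ over all of $W_N$, and one shows $W$ is a direct sum of $W_N$-translates of $U$ each of dimension $p^{\sum n_j}$ — equivalently, that $W$ is free over $k[x_1,\dots,x_r]/(x_j^{p^{n_j}}-s_j)$ — so $p^{\sum n_j} \mid \dim W$.

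The main obstacle I anticipate is the very last divisibility step: knowing $U \subseteq W$ with $\dim U = p^{\sum n_j}$ does not by itself give $p^{\sum n_j} \mid \dim W$; one genuinely needs the freeness of $W$ over the commutative polynomial truncation, which requires showing that the $D_{ij}$-action moves $U$ around without ever producing a partial overlap that breaks the grading by $x$-degree. I would handle this by equipping $W$ with the filtration by total $x$-degree (well-defined since $x_j^{p^{n_j}}$ is scalar, so degrees live in $\prod_j \mathbb{Z}/p^{n_j}$), observing the $D_{ij}$ strictly decrease it, and running the common-null-vector argument of Lemma \ref{commutenil} on each graded piece to split $W$ into $k[x]/(x_j^{p^{n_j}}-s_j)$-free summands. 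If that filtration bookkeeping proves delicate, the fallback is the standard fact that over an algebraically closed field a module over $B \otimes C$ with $B$ local (here $B = k[x_j]/(x_j^{p^{n_j}}-s_j)$, which is local with residue field $k$) and on which $B$ acts "faithfully enough" is free over $B$; either way the arithmetic conclusion $p^{\sum n_j}\mid \dim V$ follows.
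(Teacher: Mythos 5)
Your reduction to the irreducible case via a composition series and your use of Lemma \ref{WNkDivispN module} match the paper, and your linear-independence claim for the monomials $x_1^{a_1}\cdots x_r^{a_r}v$ is in substance the paper's proof that $k[x]/(x^{p^n})$ is irreducible. The genuine gap is the final step: deducing $p^{\sum n_j}\mid \dim W$ from the existence of a subspace $U\subseteq W$ with $\dim U = p^{\sum n_j}$. You flag this yourself, but neither proposed repair closes it: the filtration argument is not carried out, and the fallback ``a module over $B\otimes C$ with $B$ local is free over $B$'' is false without the unexplained ``faithfully enough'' hypothesis (the residue field of $B=k[x]/(x^{p^{n}}-s)$ is already a counterexample). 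The gap is, however, unnecessary. Every element of $W_N$ can be written with all the $D_{ij}$ on the right (commute them past the $x$'s using the relations of Section \ref{sectWeyl}, e.g.\ $[D_j,x]=D_{j-1}$), and the $D_{ij}$ annihilate $v$; hence $W_N\cdot v = k[x_1,\dots,x_r]\cdot v = U$. Since $W$ is irreducible and $W_N\cdot v$ is a nonzero submodule, $W=U$ and $\dim W$ equals $p^{\sum n_j}$ exactly — no freeness over the commutative truncation is needed. This is precisely the paper's route: it constructs a surjection from $k[x_1,\dots,x_m]/(x_1^{p^{n_1}}-s_1,\dots,x_m^{p^{n_m}}-s_m)$ onto $W_N\cdot v$ and proves injectivity by showing the source is irreducible.

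A smaller correction: ``hit it with an appropriate $D_{ij}$ which lowers $x_i$-degree by $1$'' does not quite work, since $D_{i,1}(x_i^a v)=a\,x_i^{a-1}v$ vanishes whenever $p\mid a$. One must instead apply the divided power $D_{i,a}$ matching the lowest occurring exponent $a$, so that the bottom monomial is sent to $v$ (coefficient $\binom{a}{a}=1$) and the remaining monomials to strictly higher ones, exactly as in the paper's irreducibility argument at the end of its proof of Proposition \ref{WNkDivispN}.
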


We recall the following basic result whose proof we omit:

\begin{lemma}
\label{Mapping between Spaces and Quotients}
Let $E,\ F$ be subspaces of vector spaces $V,\ W$ respectively.  Given a linear mapping $\phi : V \to W$ such that $\phi(E)\subset F$, the map $\bar{\phi} : V/E \to W/F$ given by $\bar{\phi}([v]) = [\phi(v)]$ for $v \in V$ is well defined and linear.  
\end{lemma}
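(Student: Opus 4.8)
The plan is to verify the two assertions separately: first that $\bar\phi$ is a well-defined function on $V/E$, and then that it respects the vector space operations. Since $\bar\phi$ is defined on equivalence classes via a choice of representative, the only real content is independence of that choice; linearity will then be essentially automatic.

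For well-definedness, I would take two representatives $v, v' \in V$ of the same class in $V/E$, so that $v - v' \in E$. Applying $\phi$ and using that it is linear gives $\phi(v) - \phi(v') = \phi(v - v') \in \phi(E)$, and by hypothesis $\phi(E) \subset F$, so $\phi(v) - \phi(v') \in F$. Hence $[\phi(v)] = [\phi(v')]$ in $W/F$, which says precisely that $\bar\phi([v])$ does not depend on the representative; so $\bar\phi$ is a genuine map $V/E \to W/F$.

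For linearity, I would let $[v_1], [v_2] \in V/E$ and $\lambda$ a scalar, and compute directly from the definitions of the quotient operations: $\bar\phi([v_1] + [v_2]) = \bar\phi([v_1 + v_2]) = [\phi(v_1 + v_2)] = [\phi(v_1) + \phi(v_2)] = [\phi(v_1)] + [\phi(v_2)] = \bar\phi([v_1]) + \bar\phi([v_2])$, using linearity of $\phi$ in the middle step and the definition of addition in $W/F$ at the end. The scalar identity $\bar\phi(\lambda [v]) = \bar\phi([\lambda v]) = [\phi(\lambda v)] = [\lambda \phi(v)] = \lambda [\phi(v)] = \lambda \bar\phi([v])$ is handled the same way. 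Together these establish that $\bar\phi$ is linear.

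There is no genuine obstacle here; the statement is the standard factorization-through-a-quotient fact and the proof is a short diagram chase. The only point that deserves care is the logical order: one must establish well-definedness before even speaking of linearity, since the formula $\bar\phi([v]) = [\phi(v)]$ is not a priori a function. I would present it in exactly that order and keep the argument to a few lines.
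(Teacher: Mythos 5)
Your proof is correct: the well-definedness argument via $v - v' \in E \Rightarrow \phi(v)-\phi(v') \in \phi(E) \subset F$ and the subsequent check of additivity and scalar compatibility is exactly the standard argument, presented in the right logical order. The paper explicitly omits the proof of this lemma as a basic fact, so there is nothing to compare against beyond noting that your write-up supplies the expected routine verification.
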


\begin{proof} (of prop. \ref{WNkDivispN}):
Let $M$ be a finite dimensional module over $W_N$.   If $M$ is not already irreducible, then we may find an irreducible submodule $V_1$ of $M$; then, we have that $M \cong V_1 \oplus M/V_1$.  If $M/V_1$ is not yet irreducible, then we may find an irreducible submodule $V_2 \subset M/V_1$; this implies the existence of a module $F_2 \subset M$ such that $F_1:=V_1 \subset F_2$ and $F_2 / F_1 \cong V_2$.  By continuing this process we build in a finite number of steps an exhausting filtration $F_1\subset \dots \subset F_n=M$ of $M$. The associated successive quotients $V_i:=F_{i} / F_{i-1} $ are by construction irreducible modules and together form the Jordan-H\"{o}lder decomposition of $M$ :
$$M = V_1 \oplus V_2 \oplus \cdots \oplus V_{d}.\footnote{$V_2$ may not be a submodule of $M$, as this is a decomposition as vector spaces.}  $$
 To prove the proposition, it suffices to show that each $V_i$ has dimension divisible by $p^{\sum n_j}.$
Let $V$ be one of these $V_i$.

Our strategy is to show that $$V \cong k[x_1, \ldots, x_m]/(x_1^{p^{n_1}}, \ldots, x_m^{p^{n_m}}),$$ which is clearly of dimension $p^{\sum n_j}.$ This isomorphism will be induced from a surjective map $$\bar{\bar{f}}: k[x_1, \ldots, x_m]/(x_1^{p^{n_1}}-s_1, \ldots, x_m^{p^{n_m}}-s_m) \longrightarrow V$$ with all of the $s_i$ given by lemma \ref{WNkDivispN module}. We will consider a multi-filtration on $ k[x_1, \ldots, x_m]/(x_1^{p^{n_1}}-s_1, \ldots, x_m^{p^{n_m}}-s_m)$  induced from the one on $ k[x_1, \ldots, x_m]$ given by the lexicographic order on the degrees.  The associated graded is isomorphic to $k[x_1, \ldots, x_m]/(x_1^{p^{n_1}}, \ldots, x_m^{p^{n_m}})$ so this will induce a map
$$gr(\bar{\bar{f}}): k[x_1, \ldots, x_m]/(x_1^{p^{n_1}}, \ldots, x_m^{p^{n_m}}) \longrightarrow V$$
which will be shown to be an isomorphism.

By Lemma \ref{WNkDivispN module}, there exists a common null-vector $v$ to all the $D_{ij}$.  Set $V' = W_N \cdot v$ to be the $W_N$ submodule of $V$ generated by $v$.  

Consider $W_N \cdot b$, the one-dimensional free $W_N$ module generated by a symbol $b$.  Then, we have a map
$$f: W_N\cdot b \longrightarrow W_N\cdot v.$$
It is defined on $b$ by $f(b) := v$, and extended to $w\cdot b \in W_N \cdot b$ by $f(w\cdot b) = w\cdot f(b) = w\cdot v$.  This map is clearly surjective.  We want to show that $k[x_1, \ldots, x_m]/(x_1^{p^{n_1}}-s_1, \ldots, x_m^{p^{n_m}}-s_m) \cdot b$ is a quotient of $W_N \cdot b$ and that $f$ will induce the map $\bar{\bar{f}}$ that we are looking for.
More precisely we will show that $f$ produces a surjective module morphism
$$\bar{f}: k[x_1, \ldots, x_m]\cdot b\twoheadrightarrow W_N\cdot v,$$
which in turn will induce
$$\bar{\bar{f}}: k[x_1, \ldots, x_m]/(x_1^{p^{n_1}}-s_1, \ldots, x_m^{p^{n_m}}-s_m)\cdot b \longrightarrow W_N.v.$$

Let $(D_{i p^k})$ be the left ideal generated by all $D_{i p^k}$ for $0 \le k < n_i$ with $1\le i\le m$.  Then, $(D_{i p^k})\cdot b$ is a submodule of $W_N\cdot b.$  If we show that $(D_{i p^k}) \cdot b \subset Ker(f)$, then by Lemma \ref{Mapping between Spaces and Quotients}, there will be an induced map 
$$\bar{f}: W_N\cdot b / (D_{i p^k})\cdot b \twoheadrightarrow W_N\cdot v.$$
Since $W_N \cdot b / (D_{i p^k}) \cdot b \cong \left( W_N / (D_{i p^k}) \right) \cdot b \cong k[x_1, \ldots, x_m] \cdot b,$ we will have the desired map $\bar{f}: k[x_1, \ldots, x_m]\cdot b\longrightarrow W_N\cdot v.$

We therefore show that $(D_{i p^k}) \cdot b \subset Ker(f)$.  Consider an arbitrary element in $(D_{i p^k}) \cdot b$. It is of the form $\sum u_{ik}D_{ip^k} \cdot b$ for some $u_{ik} \in W_N$.  Since $f(\sum u_{ik}D_{ip^k} \cdot b)=\sum u_{ik}D_{ip^k}  \cdot f(b)=\sum u_{ik}D_{ip^k} \cdot v$, and since we have chosen $v$ so that $D_{ip^k} v=0,$ we are done.

It remains to show that this map
$$\bar{f}: k[x_1, \ldots, x_m]\cdot b\longrightarrow W_N\cdot v$$

that we have just built indeed descends to a map
$$\bar{\bar{f}}: k[x_1, \ldots, x_m]/(x_1^{p^{n_1}}-s_1, \ldots, x_m^{p^{n_m}}-s_m)\cdot b \longrightarrow W_N.v.$$
 
By Lemma \ref{WNkDivispN module}, $x_j^{p^{n_j}}(v) = s_j(v)$, which means that $(x_1^{p^{n_1}}-s_1, \ldots, x_m^{p^{n_m}}-s_m)\cdot b \subset Ker(\bar{f})$. This in turn produces a map $\bar{\bar{f}}: k[x_1, \ldots, x_m]/(x_1^{p^{n_1}}-s_1, \ldots, x_m^{p^{n_m}}-s_m) \cdot b \twoheadrightarrow W_N \cdot v$ by Lemma \ref{Mapping between Spaces and Quotients}. This map is clearly surjective. Since $V$ is irreducible, and $W_N \cdot v$ is a non zero submodule, one has $V\cong W_N \cdot v$ and hence $\bar{\bar{f}}$ is onto.

To show that $\bar{\bar{f}}$ is injective, it suffices to show that  $k[x]/(x^{p^n})$ is irreducible.  So, let $B$ be a non-trivial submodule of $k[x]/(x^{p^n})$,  we will show that it coincides with $k[x]/(x^{p^n})$.  We want to show that  B contains an element of the form 1+higher terms, since such an
element generates the module $k[x]/x^{p^n}$ over $k[x]$. Let $b$ be an arbitrary nonzero vector in B. Let $x^n$ be the lowest monomial it contains. (We normalize $b$ so that $b = x^n + \text{higher terms}$.) As 
$$D_{n} b = 1+\text{higher terms},$$
 we have that $B = k[x]/(x^{p^n})$.  $\square$
\end{proof}

\subsection{Applications} \label{app}

Suppose that we work over  an algebraically closed field $k$ of characteristic $p$.  Denote the algebra $A_n := k \langle x_1, x_2, \ldots, x_n \rangle$.  Our noncommutative algebra is $A := A_n / (f_1, f_2, \ldots, f_m)$, where each  relation $f_i$ is a noncommutative polynomials in $x_j^{p^{n_j}},$ for some integers $n_j$.

\begin{theorem} 
\label{WNkNiAaction}
In the above setting, the algebra $\bigotimes_j W_{n_j}(k)$ acts on $N_i(A)$.
\end{theorem}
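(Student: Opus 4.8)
The plan is to exhibit, for each generator $x$ and each divided-power operator $D_{p^k}$ of $W_{n_j}(k)$, a corresponding operator on $N_i(A)$, and then to check that these operators satisfy the defining relations of $\bigotimes_j W_{n_j}(k)$ and descend correctly through the quotients $M_i/M_{i+1}$. The natural candidates come from the action of differential operators on the free algebra: multiplication by $x_j^{p^{n_j}}$ should act as the operator ``$x$'' (more precisely, the scaled variable whose $p^{n_j}$-th power is the honest variable $x_j$), and the divided-power derivative $D_{p^k}$ in the variable $x_j$ should act by a suitably normalized partial-derivative-type operator. First I would make precise the setup: because each relation $f_i$ is a noncommutative polynomial in the $x_j^{p^{n_j}}$, the two-sided ideal $(f_1,\dots,f_m)$ is stable under these differential operators, provided we only differentiate ``$p^{n_j}$ blocks at a time'' — this is exactly what $W_{n_j}(k)$, generated by $D_p,\dots,D_{p^{n_j-1}}$, is designed to do via Lemma~\ref{DpiGeneratesW*}. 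So the operators descend from $A_n$ to $A$.

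Next I would verify that these operators preserve the lower central series filtration, hence act on each $M_i(A)=AL_i(A)A$ and each quotient $N_i(A)=M_i(A)/M_{i+1}(A)$. The key point is that differentiation is a derivation (or, for the divided powers, satisfies a Leibniz-type rule with binomial coefficients), so it sends commutators to sums of commutators: $D(\,[a,b]\,) = [Da,b] + [a,Db]$, and more generally $D_{p^k}$ applied to an element of $L_i$ lands in $L_i$ since the Leibniz expansion keeps the bracket-depth. Likewise multiplication by $x_j^{p^{n_j}}$ sends $AL_iA$ into itself trivially. Since the operators preserve both $M_i$ and $M_{i+1}$, Lemma~\ref{Mapping between Spaces and Quotients} (or its module analogue) gives induced operators on $N_i(A)$. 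Then I would check the relations: $[D,x]=1$ becomes $[D_{p^k},\,\text{mult by }x_j^{p^{n_j}}]$ acting as a scalar on the appropriate block — this is precisely the computation carried out in the proof of Lemma~\ref{WNkDivispN algebra} with $\ell$ a multiple of $p^{n_j}$ — and the divided-power relations $D_{p^k}^p = 0$, $D_{p^j}D_{p^k} = \binom{p^j+p^k}{p^j}D_{p^j+p^k}$ follow from equations~(\ref{Div eq1}) and (\ref{Nilpotent D_p^i}) since they already hold as operator identities on $k[x]$ and are inherited by any quotient on which the operators are defined. Finally, operators attached to different indices $j$ commute because they involve disjoint sets of variables, giving the tensor-product structure $\bigotimes_j W_{n_j}(k)$.

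The main obstacle I anticipate is making the ``differentiation $p^{n_j}$ blocks at a time'' rigorous: a priori $N_i(A)$ is a quotient of $A$ as a vector space, but the differential operators are defined on the free algebra $A_n$ by a Leibniz rule over \emph{all} variables, and one must be careful that the well-definedness on $A=A_n/(f_1,\dots,f_m)$ genuinely uses that the $f_i$ lie in the subalgebra generated by the $x_j^{p^{n_j}}$ — and, crucially, that the \emph{correct} normalization of the divided-power operators is chosen so that they see $x_j^{p^{n_j}}$ as a single indeterminate rather than producing fractional or vanishing coefficients in characteristic $p$. Concretely, one wants $D_{p^k}$ in the $j$-th slot to act as $\partial/\partial y_j$ composed appropriately where $y_j = x_j^{p^{n_j}}$, which is legitimate exactly because $\binom{p^{n_j}}{p^k}$-type coefficients vanish mod $p$ for $0<p^k<p^{n_j}$ (Lucas), so lower-order differentiations of an $x_j^{p^{n_j}}$-block either vanish or reassemble into a $y_j$-derivative. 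Once this bookkeeping is set up, the descent through the $M_i$ and the relation-checking are routine applications of the Leibniz rule and the identities already established in Section~\ref{sectWeyl}.
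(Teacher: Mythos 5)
Your treatment of the divided-power half of the action is essentially the paper's: the paper packages these operators as the automorphism $T_j = e^{t\partial/\partial x_j}$ of $A_n\otimes k[t]/t^{p^{n_j}}$, which fixes each relation $f_i$ because $T_j(x_j^{p^{n_j}})=(x_j+t)^{p^{n_j}}=x_j^{p^{n_j}}$ in characteristic $p$, hence descends to $A$ and preserves every $L_i$ and $M_i$ (being an algebra morphism, cf.\ remark (\ref{inclusionNi})); extracting the coefficient of $t^k$ gives $(D_k)_{x_j}$. Your operator-by-operator Leibniz bookkeeping amounts to the same thing, though your phrase about lower-order differentiations ``reassembling into a $y_j$-derivative'' is off: the sub-$p^{n_j}$ divided powers simply kill the blocks $x_j^{p^{n_j}}$ (equivalently, $T_j$ fixes them by Lucas); nothing is reassembled.

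The genuine gap is in the other half of the action. First, the generator $x$ of $W_{n_j}(k)$ must act by multiplication by $x_j$ itself, not by $x_j^{p^{n_j}}$ (nor by some ``scaled variable''): the computation you cite from Lemma \ref{WNkDivispN algebra} shows precisely that $[D_{p^k}, x^{p^{n_j}}]=0$, i.e.\ $x^{p^{n_j}}$ is \emph{central}, so with your choice the Weyl relation $[D,x]=1$ fails and the operators generate a commutative algebra rather than $W_{n_j}(k)$; moreover Proposition \ref{WNkDivispN} and Corollaries \ref{NiDivisPN} and \ref{HilbertSeriesDivisN} rely on $x$ being the honest variable. Second, even with the correct choice, well-definedness of the multiplication action on $N_i(A)$ is not ``trivial'': left multiplication by $x_1$ and by $x_2$ do not commute, and their commutator --- left multiplication by $[x_1,x_2]\in M_2$ --- does not obviously carry $M_i$ into $M_{i+1}$, since $M_jM_k\subset M_{j+k-1}$ (Cor.~1.4 of \cite{BaJ}) requires $j$ or $k$ odd and hence does not give $M_2M_i\subset M_{i+1}$ for even $i$. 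The paper needs Proposition \ref{action} (using $M_3M_i\subset M_{i+2}\subset M_{i+1}$ to obtain an $A/M_3$-action) and Corollary \ref{cly} (the presentation of $A_n/M_3$ from \cite{BEJ+} together with Schur's lemma and a Jordan--H\"older argument showing the central nilpotents $u_{jk}=[x_j,x_k]$ act by zero) before the action factors through $k[x_1,\dots,x_n]$ and the multiplication operators commute with one another. Your proposal omits all of this, asserting instead that operators attached to disjoint sets of variables commute; for the multiplication operators that assertion is exactly what has to be proved.
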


Half of this action (cor. \ref{cly}) comes from the following proposition.

\begin{proposition} \label{action} The algebra $A_n/M_3$ acts on $N_i(A)$.
\end{proposition}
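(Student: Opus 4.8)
The plan is to obtain the action for free from the tautological bimodule structure on $N_i(A)$ and then to cut it down to $A_n/M_3$. For every $\ell$ the subspace $M_\ell(A)=A\,L_\ell(A)\,A$ is by construction a two-sided ideal of $A$, so multiplication in $A$ makes the quotient $N_i(A)=M_i(A)/M_{i+1}(A)$ an $A$-bimodule, via $a\cdot\overline m\cdot a':=\overline{a m a'}$ for $a,a'\in A$ and $m\in M_i(A)$. Composing with the canonical surjection $\pi\colon A_n\twoheadrightarrow A$ turns $N_i(A)$ into an $A_n$-bimodule, equivalently into a module over the algebra $A_n\otimes A_n^{\mathrm{op}}$. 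The whole point is then to show that the resulting action map $A_n\to\mathrm{End}_k(N_i(A))$ (and its right-handed counterpart) kills $M_3(A_n)$, so that $N_i(A)$ becomes an $A_n/M_3(A_n)$-bimodule.

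Here is how I would carry out that reduction. Since $\pi$ is a surjective algebra homomorphism it carries $[A_n,A_n]$ onto $[A,A]$, hence $L_3(A_n)$ onto $L_3(A)$, and hence the ideal $M_3(A_n)=A_nL_3(A_n)A_n$ onto $M_3(A)$. Thus $M_3(A_n)$ acts on $N_i(A)$ exactly as $M_3(A)$ does, and it suffices to prove
\[
M_3(A)\,M_i(A)\subseteq M_{i+1}(A)\qquad\text{and}\qquad M_i(A)\,M_3(A)\subseteq M_{i+1}(A).
\]
For $i=1$ both reduce to $M_3(A)\subseteq M_2(A)$, which is immediate; the content is the range $i\ge 2$. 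Once these inclusions are in hand, Lemma~\ref{Mapping between Spaces and Quotients} (applied to the action maps, with $F=\{0\}$ and $E=M_3(A_n)$) produces well-defined linear maps $A_n/M_3(A_n)\to\mathrm{End}_k(N_i(A))$ on either side, which are clearly algebra homomorphisms; so $A_n/M_3$ acts on $N_i(A)$.

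The two inclusions are instances of the multiplicativity of the $M_\ell$-filtration in its strong form $M_j\,M_k\subseteq M_{j+k-2}$, valid in any associative algebra — a standard structural property of the lower central series filtration (see, e.g., \cite{Ker}) — upon taking $j=3$, resp.\ $k=3$. Concretely: since the $M_\ell$ are two-sided ideals one absorbs the outer $A$-factors and is reduced to pushing a single $L_3$-factor past an element of $M_i$, which only generates terms involving $[L_3,A]=L_4$ and $[L_3,L_i]\subseteq L_{i+1}$; the one genuinely non-elementary point is that the leftover product of an $L_4$-element with an $L_i$-element again lies in $M_{i+1}$. This is exactly where the main obstacle sits: everything else is formal (the tautological bimodule structure, the behaviour of $M_\ell$ under the surjection $\pi$, the factoring through a quotient), whereas establishing $M_3\,M_i\subseteq M_{i+1}$ requires invoking — or re-deriving — the contraction property of products of lower central series terms, which is the only genuinely algebra-theoretic input of the proof.
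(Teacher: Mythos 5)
Your proposal is correct and follows essentially the same route as the paper: both arguments reduce the claim to the inclusion $M_3\,M_i\subseteq M_{i+1}$, obtained by citing the multiplicativity of the $M$-filtration (the paper uses Bapat--Jordan's $M_jM_k\subseteq M_{j+k-1}$ for $j$ or $k$ odd, you the general $M_jM_k\subseteq M_{j+k-2}$, either of which suffices), and then transport the action along the surjection $A_n\twoheadrightarrow A$, which sends $M_3(A_n)$ into $M_3(A)$. The only cosmetic differences are that you set up the full bimodule structure where the paper uses only left multiplication, and the order in which the quotient by $M_3$ and the pullback along the projection are performed.
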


We will need twice the remark, true by induction, that a morphism of algebras $\phi: A\longrightarrow B$ preserves $N_i's$, i.e 
\begin{equation} \label{inclusionNi}
\phi(N_i(A))\subset N_i(B).
\end{equation}

\begin{proof} (of prop. \ref{action})
We first remark that $A/M_3$ acts on $N_i(A)$. Indeed, Cor. 1.4 in \cite{BaJ}
states that $$M_jM_k\subset M_{j+k-1}$$ whenever $j$ or $k$ is odd. In particular $M_3M_j\subset M_{j+2}\subset M_j,$ and the left multiplication by $M_3$ on $M_j$ preserves $M_{j+1}$. The left multiplication by $A$ induces therefore an action of $A/M_3$ on $N_i(A):=M_i(A)/M_{j+1}(A).$
 Now, since the natural projection $p:A_n\longrightarrow A$ is a map of algebras, the remark $(\ref{inclusionNi})$ above  applies and in particular $p(M_3(A_n))\subset M_3(A)$. This means that $p$ descends to a morphism $\bar{p} :A_n/M_3\longrightarrow A/M_3$. So the $A/M_3$-module $N_i(A)$ becomes an $A_n/M_3$-module by composition with $\bar{p}$. $\square$

\end{proof}

\begin{corollary}\label{cly} The polynomial algebra $k[x_1,\dots,x_n]$ acts on $N_i(A)$.
\end{corollary}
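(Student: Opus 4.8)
The plan is to deduce Corollary \ref{cly} directly from Proposition \ref{action} by exhibiting $k[x_1,\dots,x_n]$ as a quotient of $A_n/M_3$, so that the $A_n/M_3$-module structure on $N_i(A)$ restricts (via the quotient map, in the direction algebra $\to$ quotient) to a $k[x_1,\dots,x_n]$-module structure. First I would recall that $M_2(A_n) = A_n[A_n,A_n]A_n$ is precisely the two-sided ideal whose quotient is the abelianization, i.e.\ $A_n/M_2(A_n) \cong k[x_1,\dots,x_n]$. Then I would observe that $M_3(A_n) \subseteq M_2(A_n)$ (this is immediate since $L_3 = [A_n,L_2] \subseteq L_2$, hence $M_3 = A_nL_3A_n \subseteq A_nL_2A_n = M_2$), so there is a canonical surjection of algebras $q: A_n/M_3 \twoheadrightarrow A_n/M_2 \cong k[x_1,\dots,x_n]$.

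The subtle point is the direction of the module structure: a surjection $q: B \twoheadrightarrow C$ of algebras does \emph{not} automatically let a $B$-module become a $C$-module. So instead I would argue the other way: since $k[x_1,\dots,x_n]$ is generated as an algebra by the images $\bar x_1,\dots,\bar x_n$, it suffices to check that these act on $N_i(A)$ compatibly. Concretely, the composite $A_n \to A_n/M_3 \to \operatorname{End}_k(N_i(A))$ from Proposition \ref{action} factors through $k[x_1,\dots,x_n] = A_n/M_2$ provided $M_2(A_n)$ acts by zero on $N_i(A)$. But by Cor.\ 1.4 of \cite{BaJ} used in the proof of Proposition \ref{action}, one has $M_3(A)M_j(A)\subseteq M_{j+2}(A)$; I would instead invoke the stronger containment that $M_2(A)M_j(A)$ lands inside $M_{j+1}(A)$ — indeed $M_2 M_j = A[A,A]A L_j A \subseteq A[A,[\cdots]]A$ and the commutator $[A, L_j] = L_{j+1}$ gives $[A,A]L_j \subseteq$ (span of) $L_{j+1}$ modulo $M_{j+1}$, more carefully $AM_2 M_j A \subseteq M_{j+1}$ because $[A,A]\cdot L_j$ generates an ideal inside $M_{j+1}$. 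Hence left multiplication by $M_2(A)$ on $M_i(A)$ preserves $M_{i+1}(A)$ and moreover sends $M_i(A)$ into $M_{i+1}(A)$, so it acts as zero on $N_i(A) = M_i(A)/M_{i+1}(A)$. Using the projection $p: A_n \to A$ as in Proposition \ref{action}, we get $p(M_2(A_n)) \subseteq M_2(A)$, so $M_2(A_n)$ also acts by zero, and the $A_n/M_3$-action descends through $A_n/M_2(A_n) = k[x_1,\dots,x_n]$.

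I expect the main obstacle to be verifying precisely that $M_2(A)$ (not merely $M_3(A)$) already annihilates $N_i(A)$, i.e.\ that left multiplication by $[A,A]$ carries $M_i$ into $M_{i+1}$; this requires a short computation with commutators analogous to, but slightly stronger than, the $M_3 M_j \subseteq M_{j+2}$ statement quoted from \cite{BaJ}. If that precise containment is not readily available, the fallback is to prove the corollary without passing to a commutative quotient of the acting algebra: simply note that $x_1,\dots,x_n \in A_n/M_3$ pairwise commute modulo their action on $N_i(A)$ — i.e.\ $[x_a,x_b] \in M_2(A_n)/M_3$ and this element acts trivially — so the subalgebra of $\operatorname{End}_k(N_i(A))$ they generate is a quotient of $k[x_1,\dots,x_n]$, which is all that is needed for a $k[x_1,\dots,x_n]$-module structure. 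Either route is short; the bookkeeping about which ideal annihilates what is the only place care is required.
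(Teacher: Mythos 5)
Your reduction hinges on the containment $M_2(A)\,M_j(A)\subseteq M_{j+1}(A)$, and this is where the argument breaks: that containment is false in general. The hypothesis ``$j$ or $k$ odd'' in Cor.~1.4 of \cite{BaJ} cannot be dropped. Already for $j=2$ you would need $M_2M_2\subseteq M_3$, i.e.\ that products of commutators vanish in $A_n/M_3$; but by the presentation recalled in the Fact (Prop.~3.1 of \cite{BEJ+}), $u_{12}u_{34}=-u_{13}u_{24}$ is a \emph{nonzero} element of $A_n/M_3$ as soon as $n\geq 4$, so $[x_1,x_2][x_3,x_4]\notin M_3(A_n)$. (The Feigin--Shoikhet identification of $A_n/M_3$ with even forms under the Fedosov product says the same thing: each $u_{ij}$ is a $2$-form and the product of two $2$-forms is a nonzero $4$-form.) Hence left multiplication by $M_2$ does not carry $M_i$ into $M_{i+1}$ when $i$ is even, and your fallback --- ``$[x_a,x_b]$ acts trivially'' --- is precisely the assertion that still requires proof; the only justification you offer for it is the same false containment. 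You correctly identify the subtlety that a surjection of algebras does not transport module structures in the needed direction, but the fix you propose does not close that gap.

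The paper's proof proceeds differently and needs no ideal containment beyond what Proposition \ref{action} already supplies. It uses the presentation of $A_n/M_3$: relations (2) and (3) make each $u_{ij}$ central, and relation (4) gives $u_{ij}^2=0$. On each irreducible subquotient of $N_i(A)$, Schur's lemma then forces $u_{ij}$ to act by a scalar whose square vanishes, hence by $0$, and the paper concludes via a Jordan--H\"older filtration that the $u_{ij}$ annihilate $N_i(A)$, so the action factors through $A_n/(M_3+(u_{ij}))\cong k[x_1,\dots,x_n]$. This representation-theoretic step --- centrality together with nilpotency of the $u_{ij}$, applied to the particular module $N_i(A)$ --- is the missing ingredient in your write-up; it is not interchangeable with the containment $M_2M_j\subseteq M_{j+1}$, which fails.
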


To prove this corollary, let us recall the following.

\begin{fact} (Prop. 3.1 in \cite{BEJ+})
The algebra $A_n/M_3$ is the algebra generated by $x_1,\dots, x_n$ and $u_{i,j}$ for $i,j\in [1,\dots,n]$ and $i\neq j$, with the following relations:\begin{enumerate}
\item[(1)] $u_{ij}=[x_i,x_j],$ and so $u_{ij}+u_{ji}=0$;
\item[(2)] $[x_i,u_{jk}]=0$ for all $i,j,k;$
\item[(3)] $u_{ij}$ commute with each other: $[u_{ij},u_{kl}]=0;$
\item[(4)] $u_{ij}u_{kl}=0$ if $i,j,k,l$ contains repetitions;
\item[(5)] $u_{ij}u_{kl}=-u_{i,k}u_{jl}$ if $i,j,k,l$ are all distincts.
\end{enumerate}
\end{fact}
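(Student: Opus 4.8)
The plan is to get Corollary~\ref{cly} out of Proposition~\ref{action} by pushing the $A_n/M_3$-module structure on $N_i(A)$ down the canonical surjection $\pi\colon A_n/M_3 \twoheadrightarrow A_n/M_2$, whose target is the free commutative algebra $k[x_1,\dots,x_n]$. First I would use the Fact to describe $\ker\pi$ exactly: in the presentation of $A_n/M_3$ by the generators $x_i,\ u_{ij}$ and relations (1)--(5), imposing $u_{ij}=0$ turns (1) into $[x_i,x_j]=0$ and makes (2)--(5) vacuous, so the quotient by the two-sided ideal $I=(u_{ij})$ is $k[x_1,\dots,x_n]$; and since $M_2$ is the ideal generated by commutators, this quotient is $A_n/M_2$, i.e.\ $\ker\pi=I$. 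Thus to manufacture a $k[x_1,\dots,x_n]$-action on $N_i(A)$ it suffices to check that $I$ annihilates $N_i(A)$: then by Lemma~\ref{Mapping between Spaces and Quotients} applied to left multiplication (or just the universal property of the quotient) one gets the induced map $k[x_1,\dots,x_n]=(A_n/M_3)/I\longrightarrow\operatorname{End}_k\!\big(N_i(A)\big)$.

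Next I would reduce ``$I$ annihilates $N_i(A)$'' to a one-generator statement. As $I$ is generated as a two-sided ideal by the finitely many $u_{ij}$ and the action is by algebra homomorphisms, it is enough that each $u_{ij}$ acts as $0$ on $N_i(A)$, since $(a\,u_{ij}\,b)\cdot v=a\cdot\big(u_{ij}\cdot(b\cdot v)\big)=0$. Transported through $A_n\to A$, the element $u_{ij}=[x_i,x_j]$ lands in $L_2(A)\subseteq M_2(A)$, and left multiplication on $N_i(A)=M_i(A)/M_{i+1}(A)$ sends the class of $m\in M_i(A)$ to the class of $u_{ij}m\in M_2(A)M_i(A)$; so the required fact is the filtration estimate $M_2(A)\,M_i(A)\subseteq M_{i+1}(A)$. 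For odd $i$ this is the product bound $M_jM_k\subseteq M_{j+k-1}$ (one index odd) from \cite{BaJ} that already powered Proposition~\ref{action}. Here is where the hypothesis on $A$ is meant to be used: the relations $f_\bullet$ are noncommutative polynomials in the elements $x_j^{p^{n_j}}$, and relations (1)--(2) give $[x_j^{p^{n_j}},x_k]=p^{n_j}x_j^{p^{n_j}-1}u_{jk}=0$ in characteristic $p$, so each $x_j^{p^{n_j}}$ is central in $A_n/M_3$; this centrality makes the ideal generated by the $f_\bullet$ compatible with $I$ and with the $M_\bullet$-filtration, and lets the estimate be verified inside $A$.

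The step I expect to be the genuine obstacle is exactly this containment $M_2(A)\,M_i(A)\subseteq M_{i+1}(A)$ for all the $i$ in play --- equivalently, that every commutator, and more generally all of $M_2(A)$, acts trivially on each $N_i(A)$. For odd $i$ it falls out of \cite{BaJ}, but for even $i$ the crude product bound only yields $M_2M_i\subseteq M_i$, so one has to call on the finer structure available --- the explicit presentation of $A_n/M_3$ from the Fact, the centrality of the $x_j^{p^{n_j}}$, and the fact that the defining relations live in those central $p$-th powers --- to exclude a commutator acting nontrivially on the even-degree slices. Once that lemma is in hand, the descended map above is the claimed polynomial-algebra action, and it constitutes ``half'' of the $\bigotimes_j W_{n_j}(k)$-action that Theorem~\ref{WNkNiAaction} will assemble.
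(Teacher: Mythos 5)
Your proposal does not prove the statement at hand. The statement is the presentation of $A_n/M_3$ itself --- that $A_n/M_3$ is generated by the $x_i$ and $u_{ij}=[x_i,x_j]$ subject to relations (1)--(5) --- and this is precisely what you take as an input: your argument starts from the presentation and uses it to deduce Corollary \ref{cly} (the $k[x_1,\dots,x_n]$-action on $N_i(A)$). Nowhere do you verify that the listed relations hold in $A_n/M_3$ (e.g.\ that $[x_i,u_{jk}]$ and $u_{ij}u_{kl}$ with repeated indices lie in $M_3$, or the identity (5)), nor --- the harder half --- that they suffice, i.e.\ that the canonical surjection from the algebra presented by (1)--(5) onto $A_n/M_3$ is injective (in \cite{BEJ+} and in the original Feigin--Shoikhet picture this rests on identifying $A_n/M_3$ with even differential forms under the Fedosov product). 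The paper itself quotes this statement as Prop.\ 3.1 of \cite{BEJ+} without proof, so there is no internal proof to compare against; but in any case an argument that assumes the presentation cannot serve as its proof.

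As a secondary point, even read as a proof of Corollary \ref{cly}, your route is incomplete exactly where you flag it: you reduce everything to the containment $M_2(A)\,M_i(A)\subseteq M_{i+1}(A)$, which the product bound of \cite{BaJ} gives only when an index is odd, and you leave the even case open. The paper's proof of that corollary does not need such a containment: it lets $u_{ij}$ act through the $A_n/M_3$-action already provided by Proposition \ref{action}, notes from relations (2)--(4) that $u_{ij}$ is central with $u_{ij}^{2}=0$, so by Schur's lemma it acts by the scalar $0$ on each irreducible constituent and hence, via a Jordan--H\"older filtration, acts by $0$ on $N_i(A)$; this is what descends the action to $k[x_1,\dots,x_n]$. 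If you want to salvage your approach, you would have to either prove the even-index containment (which is not available in this generality) or switch to the paper's representation-theoretic argument.
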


\begin{proof} (of cor. \ref{cly}) By  (2) and (3) one has that $u_{ij}$ is central in $A_n/M_3$. So $u_{ij}$ acts as a scalar on irreducible modules (Schur's lemma). This scalar is $0$ since $u_{ij}^2=0$ by (4) above. This implies, using the Jordan-Holder decomposition of an arbitrary module, that $u_{ij}$ acts as $0$. But this means that this action descends to an action of $k[x_1,\dots,x_n]$. We can then apply this result to the action given by prop. \ref{action}.$\square$
\end{proof}

To understand the origin of the other half of the action of $\bigotimes_j W_{n_j}(k)$ on $N_i(A)$, namely the action of the $(D_m)_{x_i}$'s, let us remark that $D_m$ is the coefficient of $t^m$ in the expression of the automorphism $$T:=e^{tD}=\sum_m \frac{D^m}{m!}t^m$$ of the algebra $A_n \otimes k[t] / t^{p^n}$. This is convenient because it suffices to define the action of $T$  to deduce the action of the $D_m$'s. 

\begin{proof} (of thm.\ref{WNkNiAaction})
  Consider the automorphism $T_j:=e^{t\frac{\partial}{\partial x_j}}$ of the algebra $A_n \otimes k[t] / t^{p^n}.$ It is given on generators by 
$$T_j(x_i) := \begin{cases}
x_j + t & i = j \\ 
x_i & i\neq j.
\end{cases}
 $$
We first want to show that $T_j$ descends to an automorphism of $A \otimes k[t] / t^{p^n}.$ It suffices to check that $T_j(f)= f$ for $f$ a non commutative polynomials in $x_j^{p^{n_j}}$'s.  Since $f$ is of the form $f=\sum \Pi_j (\alpha_j x_1^{p^n}) \alpha_l,$ with $ \alpha_k\in k \langle  x_1, \ldots, x_n \rangle$ not containing $x_j$, one has that $T_j (\alpha_k)=\alpha_k$. So in particular
 $$T_j(f)=\sum \Pi_j  (\alpha_j T_j(x_1^{p^n})) \alpha_l = f,$$ since  $ T_j(x_j^{p^n}) = (T_j(x_j))^{p^n} = (x_j + t)^{p^n} = x_j ^{p^n} +t^{p^n}=x_j ^{p^n}.$

We can now apply the remark $(\ref{inclusionNi})$ to $T_j$ to obtain the action of  $T_j$ on $N_i(A)$. We then define the action of $(D_k)_{x_j}$ on $N_i(A)$, for $k< p^{n_j}$, to be the coefficient of $t^k$ in the representation of $T_j$ on $N_i(A)$, i.e.  $$T_j =: \sum t^k (D_k)_{x_j}.$$  
$\square$
\end{proof}

\begin{corollary}
\label{NiDivisPN}
With the conditions of Theorem \ref{WNkNiAaction}, if $N_i(A)$ is finite dimensional (i.e. if the abelianization $A_{ab}$ is finite dimensional), and if the relations are noncommutative polynomials in the variables $x_1^{p^{n_1}}, \ldots, x_m^{p^{n_m}})$, then $\text{dim}(N_i(A))$ is divisible by $p^{\sum{n_i}}$.
\end{corollary}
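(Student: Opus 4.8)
The plan is to chain together the two principal results of the section. Theorem \ref{WNkNiAaction} endows $N_i(A)$ with the structure of a module over $\bigotimes_j W_{n_j}(k)$, and by hypothesis this module is finite dimensional (the parenthetical equivalence with finiteness of the abelianization $A_{ab}$ being a known structural fact about the $N_i$, for which one supplies a reference). Proposition \ref{WNkDivispN} asserts that any finite dimensional $\bigotimes_j W_{n_j}(k)$-module has dimension divisible by $p^{\sum n_j}$. Applying it to the module $N_i(A)$ immediately gives $p^{\sum n_j}\mid \dim(N_i(A))$, which is the assertion.

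The one subtlety is the standing hypothesis, carried over from Lemma \ref{WNkDivispN module} and Proposition \ref{WNkDivispN}, that $k$ be algebraically closed; throughout the \emph{Applications} subsection this is already assumed, so in that setting nothing further is needed. If instead one wants the conclusion over an arbitrary field $k$ of characteristic $p$, I would pass to the algebraic closure: set $\bar A := A\otimes_k \bar k = A_n(\bar k)/(f_1,\dots,f_m)$, observe that each $f_s$ is still a noncommutative polynomial in the $x_j^{p^{n_j}}$ over $\bar k$, and use that the formation of $N_i$ commutes with the flat base change $-\otimes_k\bar k$. Concretely, the lower central series terms $L_i$ and the ideals $M_i = A L_i A$ are built from $k$-bilinear operations (bracket, multiplication, ideal generation) and quotients of $k$-vector spaces are preserved under $-\otimes_k\bar k$, so $N_i(\bar A)\cong N_i(A)\otimes_k\bar k$ and hence $\dim_{\bar k} N_i(\bar A) = \dim_k N_i(A)$. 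Theorem \ref{WNkNiAaction} applies to $\bar A$, making $N_i(\bar A)$ a finite dimensional $\bigotimes_j W_{n_j}(\bar k)$-module, and Proposition \ref{WNkDivispN} over $\bar k$ then concludes.

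Since this is a corollary, there is no genuine obstacle. The only point demanding a little care is verifying that $N_i$ is stable under base change — a routine but not entirely vacuous check, because $M_i$ involves an ideal-generation step — and, if one wants the parenthetical strengthening, locating a reference for the claim that finiteness of $A_{ab}$ forces each $N_i(A)$ to be finite dimensional.
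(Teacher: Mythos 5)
Your proposal is correct and follows the paper's own argument exactly: the corollary is obtained by combining the module structure from Theorem \ref{WNkNiAaction} with the divisibility statement of Proposition \ref{WNkDivispN}. The extra discussion of base change to $\bar{k}$ is not needed here, since the \emph{Applications} subsection already assumes $k$ algebraically closed, but it is a reasonable remark if one wishes to relax that hypothesis.
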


\begin{proof}
According to Proposition \ref{WNkDivispN}, each finite dimensional representation of $W_n(k)$ has dimension divisible by $p^n$.  In the case of the relations being polynomials of $x_i^{p^{n_i}}$ with $1 \le i \le r$, the tensor product algebra $\bigotimes_i W_{n_i}(k)$ acts on $N_i(A)$.  Because this is a tensor product of irreducible representations of $W_{n_i}(k)$, each of its irreducible representations has dimension divisible by $p^{\sum n_i}$.   $\square$
 
\end{proof}

\begin{corollary}
\label{HilbertSeriesDivisN}
Except for finite dimensionality of $N_i(A)$, suppose that in the situation of Corollary \ref{NiDivisPN}, the relations are homogeneous in $x_1, \ldots, x_r$.  Then, the Hilbert series of $N_i(A)$ with respect to the corresponding variables $X_1, \ldots, X_r$ is divisible by 
$$(1+X_1 + \cdots + X_1^{p^{n_1} - 1})\ldots(1+X_r + \cdots + X_r^{p^{n_r} - 1}),$$
 in the sense that the ratio is a power series with non-negative integer coefficients.
\end{corollary}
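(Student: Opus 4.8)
The plan is to mimic the structure of the proof of Corollary \ref{NiDivisPN}, but keeping track of the grading. First I would observe that since the relations $f_1, \ldots, f_m$ are homogeneous in $x_1, \ldots, x_r$, the algebra $A$ inherits a $\mathbb{Z}^r$-grading (refining it along with the usual total grading, or whatever other variables appear), and hence so do $M_i(A)$, $M_{i+1}(A)$, and therefore $N_i(A) = M_i(A)/M_{i+1}(A)$. The automorphisms $T_j = e^{t\,\partial/\partial x_j}$ of $A_n \otimes k[t]/t^{p^{n_j}}$ used in the proof of Theorem \ref{WNkNiAaction} do not preserve the grading, but they are filtered: $T_j(x_j) = x_j + t$ shifts degree downward, so if we assign $t$ the same degree as $x_j$ in the $j$-th grading, then $T_j$ is homogeneous of degree $0$ on $A_n \otimes k[t]/t^{p^{n_j}}$, and the operator $(D_k)_{x_j}$, being the $t^k$-coefficient, lowers the $j$-th degree by $k$ (and preserves the others). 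Thus each graded piece $N_i(A)_{\mathbf{d}}$ (for a multidegree $\mathbf{d} = (d_1, \ldots, d_r, \ldots)$) is acted on by the $x_j$'s (raising degree $j$ by one, from Corollary \ref{cly}) and the $(D_k)_{x_j}$'s (lowering degree $j$), compatibly with the relations $D_{p^i}^p = 0$ etc.

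Next I would fix a multidegree in the ``other'' variables (those not among $x_1, \ldots, x_r$) and collect all the graded pieces of $N_i(A)$ with that fixed other-degree into a single module $V$ over $\bigotimes_{j=1}^r W_{n_j}(k)$; this $V$ is finite-dimensional (each graded component of a finitely generated graded module is, and finite dimensionality of $A_{ab}$ in each multidegree is automatic since $A_{ab}$ is a quotient of a polynomial ring — or one simply works degree by degree, where finiteness is clear). Moreover $V$ is a $\mathbb{Z}^r$-graded module: $x_j$ raises and $(D_k)_{x_j}$ lowers the $j$-th grading. I would then want a graded refinement of Proposition \ref{WNkDivispN}: a finite-dimensional $\mathbb{Z}^r$-graded module over $\bigotimes_j W_{n_j}(k)$, on which the grading operators act as above, has Hilbert series (in the variables $X_1, \ldots, X_r$) divisible by $\prod_j (1 + X_j + \cdots + X_j^{p^{n_j}-1})$. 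Running the Jordan--Hölder argument of Proposition \ref{WNkDivispN} in the graded category, it suffices to prove this for a graded-irreducible module, and the proof there exhibits such a module as $k[x_1,\ldots,x_r]/(x_1^{p^{n_1}}, \ldots, x_r^{p^{n_r}})$ up to a shift; this ring, with its evident $\mathbb{Z}^r$-grading, has exactly the Hilbert polynomial $\prod_j(1 + X_j + \cdots + X_j^{p^{n_j}-1})$, and a grading shift multiplies the Hilbert series by a monomial, which does not affect divisibility. Summing over graded-composition factors, the Hilbert series of $V$ is a sum of monomial multiples of this polynomial, hence divisible by it with non-negative integer coefficient ratio.

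Finally, I would assemble: the full Hilbert series of $N_i(A)$ with respect to $X_1, \ldots, X_r$ is $\sum V(X_1, \ldots, X_r)$ summed over the fixed ``other'' multidegrees (possibly with extra formal variables recording those), and since each $V$'s Hilbert series is $\prod_j(1 + X_j + \cdots + X_j^{p^{n_j}-1})$ times a series with non-negative integer coefficients, so is the sum. This gives the claimed divisibility in the stated sense.

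The main obstacle I anticipate is the bookkeeping that makes the graded Jordan--Hölder argument legitimate: one must check that $\bigotimes_j W_{n_j}(k)$ together with the grading operators forms a $\mathbb{Z}^r$-graded algebra acting on the $\mathbb{Z}^r$-graded space $V$, that graded submodules and graded-irreducible quotients behave as in the ungraded case (Schur's lemma in the graded setting, the nilpotency-based common-null-vector lemma picking a \emph{homogeneous} null vector — which is fine since the $D_{ij}$ are homogeneous), and that the isomorphism $V \cong k[x_1,\ldots,x_r]/(x_1^{p^{n_1}},\ldots,x_r^{p^{n_r}})$ produced in Proposition \ref{WNkDivispN} can be arranged to be homogeneous up to an overall degree shift. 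None of this is deep, but it is where care is needed; the algebraic content is entirely contained in the results already proved.
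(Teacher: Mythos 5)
There is a genuine gap, and it sits exactly where the corollary's hypothesis differs from Corollary \ref{NiDivisPN}: the statement explicitly drops the finite dimensionality of $N_i(A)$, but your argument reinstates it. The module $V$ you form by fixing the multidegree in the ``other'' variables and summing over all degrees in $x_1,\ldots,x_r$ is \emph{not} a single graded component; it is an infinite direct sum of them, and it can be infinite dimensional. For instance, take $A = A_2/(x_1^{p^{n_1}}x_2^{p^{n_2}})$: the single relation is a homogeneous noncommutative polynomial in $x_1^{p^{n_1}}, x_2^{p^{n_2}}$, there are no ``other'' variables, and already $N_1(A)=A_{ab}=k[x_1,x_2]/(x_1^{p^{n_1}}x_2^{p^{n_2}})$ is infinite dimensional. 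Your parenthetical justification confuses finiteness of each individual multigraded piece (which is automatic) with finiteness of $V$. Once $V$ is infinite dimensional, the graded Jordan--H\"older decomposition into finitely many graded-irreducible factors of the form $k[x_1,\ldots,x_r]/(x_1^{p^{n_1}},\ldots,x_r^{p^{n_r}})$ up to shift is no longer available, so the core of your argument does not go through in precisely the case the corollary is written to cover.

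The paper's proof avoids this by not using irreducibility at all: working with one variable (the general case being similar), it builds an exhaustive filtration $F_1\subset F_2\subset\cdots$ of $M=N_i(A)$ by taking at each stage a \emph{homogeneous} common null vector $v$ of the $D_j$'s in $M/F_{i-1}$ (such a vector exists because the $D_j$ strictly lower the nonnegative grading) and letting the next quotient be the cyclic module $k[x]\cdot v$. Each quotient is then either $k[x]/(x^{jp^{n_1}})$ for some $j\ge 1$ (not necessarily $j=1$, so these need not be irreducible) or the infinite-dimensional module $k[x]$ itself; in both cases the Hilbert series, $X^{\ell}(1+X+\cdots+X^{jp^{n_1}-1})$ or $X^{\ell}(1+X+X^2+\cdots)$, factors through $1+X+\cdots+X^{p^{n_1}-1}$ with non-negative coefficients, and summing over the filtration gives the claim. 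Your graded bookkeeping for the operators $(D_k)_{x_j}$ and the observation that a degree shift only multiplies the Hilbert series by a monomial are correct and are indeed the points the paper relies on; what is missing is the allowance for the infinite cyclic factor $k[x]$ (and for $j>1$), which is what replaces the finite Jordan--H\"older decomposition.
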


\begin{proof}
Consider the case $r = 1$, as the general proof follows similarly.  Let $M = N_i (A)$.  It is a $\mathbb{Z}$-graded module over $W_n(k)$, with a grading given by $deg(x) = 1$, $deg(D) = -1$, and nonnegative degrees of the vectors.
Because of this, we may take any homogeneous vector and apply $D_j$ until getting $0$; thus, there exists a common null vector of $D_j$, namely $v_1 \ne 0$.
Let $M_1 = F_1$ be the submodule generated by $v_1$, then it has a basis of $\langle v_1, xv_1, x^2 v_1\ldots \rangle$.
Thus, we have two cases for $M_1$.  First, if none of these $x^{s} v_1 = 0$, then $M_1 \cong k[x]$.  Second, if $x^s v_1 = 0$, where $s$ is minimal but positive, then we have that $s$ is a multiple of $p^{n_1}$ as by Theorem \ref{WNkNiAaction}.  Thus, $M_1 \cong k[x]/(x^{jp^{n_1}})$ for some positive integer $j$.

Next, let $v_2 \ne 0$ be a common null vector of $D_j \in M/F_1$.  We define $M_2$ as the submodule in $M/F_1$ generated by $v_2$, and $F_2$ as the preimage of $M_2$ in $M$.  Continuing this construction, we make an exhaustive filtration $F_1 \subset F_2 \subset F_3 \subset \ldots$ of $M$ such that $F_i / F_{i-1} = M_i$, and all $M_i \cong k[x]$ or $k[x]/(x^{jp^{n_1}})$.

If $E$ is a graded vector space, denote $h_E$ as the Hilbert Series of $E$, i.e., if $E = \bigoplus_i E_i$, then $h_E = \sum_i \text{dim}(E_i) X^i$.

Since $h_{N_i(A)} = h_{M_1} + h_{M_2} + \cdots$, we are done if each $h_{M_i}$ is divisible by the desired polynomial.  To this end, note that if $M_i  \cong k[x] \cdot v = \langle v, xv, \ldots \rangle$ and $deg(v) = \ell$, then $h_{M_i} = X^{\ell} + X^{\ell+1} + \cdots = (1+X+ \cdots + X^{p^{n_1}-1})(X^{\ell} + X^{\ell + p^{n_1}} + \cdots)$.  And, if $M_i \cong k[x]/(x^{p^{n_1} j}) \cdot v'$, where $deg(v') = \ell'$, then $h_{M_i} = X^{\ell'} + X^{\ell' +1} + \cdots +X^{\ell' + (j-1)p^{n_1}} = (1+X + \cdots + X^{p^{n_1} - 1})(X^{\ell'} + X^{\ell' + p^{n_1}} + \cdots + X^{\ell' + (j-1)p^{n_1}})$.  $\square$
\end{proof}


\section{Bigraded Structure of $N_2$ and $N_3$ over $\mathbb{Z}$}
\label{Z section}

In this section, we give complete descriptions of the abelian group of $N_i(A)$ for $i = 2,3$ and $A = A_2 /(x_1^m, x_2^n)$, where $A_2 = \mathbb{Z} \langle x_1, x_2 \rangle$.
A bigrading of $A_k$, the free algebra with $k$ generators, is given by the total degree in $x_1, x_2, \ldots, x_k$.  This gives us more information about the inherent structure of the algebra.

However, with the added relations from the ideal $(x_1^m, x_2^n)$, which is generated by homogeneous terms in $x_1, x_2$, $A$ inherits a bigrading from $A_2$ which is bounded by $_m,n)$.  More precisely, the bigrading of a monomial $P$ is given by $(|P|_{x_1}, |P|_{x_2})$, where $|P|_{x_1}$ denotes the total degree in $x_1$ of $P$ and $|P|_{x_2}$ denotes the total degree in $x_2$ of $P$.  For example, the bigrading of the term $x_1 x_2^3 x_1$ is given by $(2,3)$.

In fact, the bigrading over $A_2$ and $A$ induce a grading over $N_2 (A)$ and $N_3 (A)$.  

When we view $N_i(A_2)$ as finite-dimensional abelian groups, we may induce a bigrading based upon the degrees of each generator.

Since these are abelian groups, they may be decomposed into a free part (copies of $\mathbb{Z}$) and a torsion part (direct sum of $\mathbb{Z}_m$ for integral $m$) by the Fundamental Theorem of Finitely Generated Abelian Groups.  Thus, using the data generated by our {\it MAGMA} program, we conjecture and prove the structures of $N_2$ and $N_3$.

We will use the simple but well-known Leibniz Rule throughout:
\begin{lemma}
\label{Leibniz Rule}
$$[a_1 \ldots a_n, b] = \sum_{i = 1}^n a_1 \ldots a_{i-1} [a_i, b] a_{i+1} \ldots a_n$$
and
$$[a, b_1 \ldots b_n] = \sum_{i = 1}^n b_1 \ldots b_{i-1} [a, b_i] b_{i+1} \ldots b_n.$$
\end{lemma}

\subsection{Structure of $N_2$}\label{N2}

The aim of this section is to show that the abelian group structure of $N_2$ is given by the following table:

\begin{table} [H] 
\small
\caption{Bigraded Description of $N_2 (A)$}
\label{Bigraded Description of $N_2 (A)$}
\begin{tabular}{|l||l|l|l|ll|l|l|}
\hline
$(m,n)$ & $0$ & $1$ & $2$ & $\cdots$ & $\cdots$ & $n-1$ & $n$ \\
\hline
\hline
$0$ & $0$ & $\cdots$ & $\cdots$ & $\cdots$ & $\cdots$ & $\cdots$ & $\cdots$  \\
\hline
$1$ & $\vdots$ & $\mathbb{Z}$ & $\mathbb{Z}$ & $\cdots$ & $\cdots$ & $\mathbb{Z}$ & $\mathbb{Z}_n$ \\
\hline
$2$ & $\vdots$ & $\mathbb{Z}$ & $\mathbb{Z}$ &$\cdots$ & $\cdots$ & $\mathbb{Z}$ & $\mathbb{Z}_n$  \\
\hline
$\ \vdots$ & $\vdots$ & $\vdots$ & $\vdots$ & $\ddots$ & & $\vdots$ & $\vdots$  \\

$\ \vdots$ & $\vdots$ & $\vdots$ & $\vdots$ & & $\ddots$ & $\vdots$ & $\vdots$   \\
\hline
$m-1$ & $\vdots$ & $\mathbb{Z}$ & $\mathbb{Z}$ & $\cdots$ & $\cdots$ & $\mathbb{Z}$ & $\mathbb{Z}_n$\\ \hline
$m$ & $\vdots$ & $\mathbb{Z}_m$ & $\mathbb{Z}_m$ & $\cdots$ & $\cdots$ & $\mathbb{Z}_m$ & $\mathbb{Z}_{(m,n)}$  \\
\hline

\end{tabular}
\end{table}
where $(m,n) = gcd(m,n)$.
In other terms we want to show that

\begin{theorem}
\label{N_2 Basis over Z}
The free part of $N_2 (A)$ as a $\mathbb{Z}$-module has a basis $\{ x_1^ix_2^jy \mid 0 \leq i \leq m-1, 0\leq j \leq n-1 \}$.  (Free part description)
\end{theorem}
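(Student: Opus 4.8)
We work with $A = \mathbb{Z}\langle x_1,x_2\rangle/(x_1^m, x_2^n)$. The element $y$ should be understood (following the paper's conventions in \ref{N2}) as the class of the commutator $[x_1,x_2]$, so that $N_2(A) = M_2/M_3$ is generated as a module over the abelianization $A_{ab} = \mathbb{Z}[x_1,x_2]/(x_1^m,x_2^n)$ by the single element $y$; this uses the $A/M_3$-module structure on $N_i(A)$ (the analogue over $\mathbb{Z}$ of Corollary \ref{cly}). Thus every element of $N_2(A)$ is a $\mathbb{Z}$-linear combination of the monomials $x_1^i x_2^j y$ with $0\le i\le m-1$, $0\le j\le n-1$, and the only thing left is to determine the relations among them — equivalently, to show that these $mn$ elements are $\mathbb{Z}$-linearly independent modulo torsion, i.e. that the free rank is exactly $mn$ with this set as a basis.

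\textbf{Plan.} First I would establish the spanning statement: $N_2(A)$ is spanned over $\mathbb{Z}$ by $\{x_1^ix_2^jy : 0\le i\le m-1,\ 0\le j\le n-1\}$. This follows from the fact that $M_2$ is generated as a two-sided ideal by $y=[x_1,x_2]$, that modulo $M_3$ one can move all $x$'s to the left (since $[x_k, y]\in M_3$ by the Leibniz rule and the defining relations of $A_n/M_3$), and that $x_1^m = x_2^n = 0$ in $A$ kills the out-of-range monomials. Second, and this is the heart of the argument, I would prove linear independence of these monomials in $N_2(A)\otimes\mathbb{Q}$ (equivalently, that none of their nonzero $\mathbb{Z}$-combinations is torsion). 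The natural route is to exhibit a concrete linear functional, or better a module map to a space where independence is transparent: map $N_2(A_2)$ (free algebra, no relations) to $N_2(A)$ by the projection, compute a basis of the relevant bigraded pieces of $N_2(A_2)$ — or use an explicit realization of $M_2(A_2)/M_3(A_2)$ inside differential forms à la Feigin–Shoikhet — and track which classes survive modulo $(x_1^m, x_2^n)$. Concretely, since $A_2/M_3 \cong \mathbb{Z}\langle x_1,x_2\rangle/M_3$ has the explicit presentation recalled in the Fact (with $u_{12}=[x_1,x_2]$ central and $u_{12}^2=0$), the module $N_2(A_2) = M_2(A_2)/M_3(A_2)$ is the free $\mathbb{Z}[x_1,x_2]$-module on $y=u_{12}$, and passing to $A$ simply imposes $x_1^m y = 0$, $x_2^n y = 0$, yielding exactly the truncated polynomial module $\mathbb{Z}[x_1,x_2]/(x_1^m,x_2^n)\cdot y$ — whose free part has the asserted basis. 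The bigraded bookkeeping then reads off the table: in bidegree $(i,j)$ with $i<m$, $j<n$ one gets a free $\mathbb{Z}$; the torsion $\mathbb{Z}_m$ in row $m$ and $\mathbb{Z}_n$ in column $n$ and $\mathbb{Z}_{(m,n)}$ in the corner arise from the relations $x_1^m y=0$ interacting with $x_2$-shifts and vice versa, but those are torsion and do not affect the free-part claim.

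\textbf{Main obstacle.} The delicate point is the linear independence over $\mathbb{Z}$ (no hidden torsion in the claimed free positions): a priori the passage from $A_2$ to $A=A_2/(x_1^m,x_2^n)$ could introduce extra relations beyond $x_1^m y = x_2^n y = 0$, because $M_3(A)$ might be strictly larger than the image of $M_3(A_2)$ plus the ideal contribution. I would resolve this by working with the explicit presentation of $A/M_3$: one checks that $A/M_3$ is generated by $x_1,x_2,u_{12}$ with the same relations as in the Fact plus $x_1^m = x_2^n = 0$ and the consequent $x_1^{m-1}u_{12}$-type relations, and that in this algebra $M_2/M_3$ is freely $\mathbb{Z}[x_1,x_2]/(x_1^m,x_2^n)$-generated by $u_{12}$ in the free-part sense. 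Equivalently — and perhaps cleaner for a rigorous write-up — I would construct by hand a $\mathbb{Z}$-linear map $N_2(A)\to \big(\mathbb{Z}[x_1,x_2]/(x_1^m,x_2^n)\big)^{\mathrm{free}}$ sending $x_1^ix_2^jy\mapsto x_1^ix_2^j$, prove it is well defined by verifying it kills all defining relations of $M_3$ (using the Leibniz Rule, Lemma \ref{Leibniz Rule}), and observe it is surjective with the spanning set mapping to a basis, forcing that set to be a basis of the free part. I expect the relation-checking for well-definedness, together with the claim that no torsion lurks in bidegrees $(i,j)$, $i\le m-1$, $j\le n-1$, to be the most technical part; everything else is routine commutator manipulation.
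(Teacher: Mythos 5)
The spanning half of your proposal is essentially the paper's own argument: rewrite elements of $M_2$ in the normal form $x_1^ix_2^jy$ using the presentation of $A_2/M_3$ (Theorem \ref{BEJ+ A_2/M_3}, Proposition \ref{Basis of N_2(A_2)}), then kill the out-of-range monomials with $x_1^m=x_2^n=0$. The gap is in your independence step. It is false that passing from $A_2$ to $A$ ``simply imposes $x_1^my=0$, $x_2^ny=0$'': since $x_1^m=0$ in $A$, the commutator $[x_1^m,x_2]$ vanishes, and by the Leibniz rule $[x_1^m,x_2]\equiv m\,x_1^{m-1}y \pmod{M_3}$, so $m\,x_1^{m-1}x_2^jy=0$ in $N_2(A)$ for every $j$, and symmetrically $n\,x_1^ix_2^{n-1}y=0$. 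Hence $N_2(A)$ is \emph{not} the truncated polynomial module $\mathbb{Z}[x_1,x_2]/(x_1^m,x_2^n)\cdot y$ and is not free of rank $mn$; these extra relations are precisely the torsion recorded in Theorem \ref{N_2 Torsion over Z} and in the computer tables (e.g.\ the $\mathbb{Z}\langle x_1,x_2\rangle/(x_1^3,x_2^7)$ table, whose row $3$ and column $7$ are pure torsion). Your ``cleaner'' fallback collapses at exactly the point you flag as the technical one: the map $x_1^ix_2^jy\mapsto x_1^ix_2^j$ into $\mathbb{Z}[x_1,x_2]/(x_1^m,x_2^n)$ cannot be well defined, since it would have to send $m\,x_1^{m-1}y$, which is $0$ in $N_2(A)$, to $m\,x_1^{m-1}\neq 0$.

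What the paper does instead of building a global isomorphism is a bidegree argument: every relation coming from $(x_1^m,x_2^n)$ lives in bidegrees with $x_1$-degree at least $m$ or $x_2$-degree at least $n$ (for instance $m\,x_1^{m-1}x_2^jy$ sits in bidegree $(m,j+1)$), so the monomials $x_1^ix_2^jy$ with $i\le m-2$ and $j\le n-2$ are unaffected and remain $\mathbb{Z}$-independent, exactly as in $N_2(A_2)$, while the boundary monomials $i=m-1$ or $j=n-1$ become torsion of orders $m$, $n$, $(m,n)$ (Theorem \ref{N_2 Torsion over Z}). Read this way, the stated index range $0\le i\le m-1$, $0\le j\le n-1$ is a generating set, and the genuinely free part has rank $(m-1)(n-1)$; your argument, if it went through, would instead prove free rank $mn$, which contradicts the paper's torsion theorem and data. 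To repair your writeup you must either restrict the independence claim to the interior bidegrees via the grading, or enlarge the target of your map to account for the relations $m\,x_1^{m-1}x_2^jy=0$ and $n\,x_1^ix_2^{n-1}y=0$.
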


and that 

\begin{theorem}
\label{N_2 Torsion over Z}
As a $\mathbb{Z}$-module, the elements $x_1^ix_2^{n-1}y$ for $0 \le i \le m-2$ (resp $x_1^{m-1}x_2^jy$) are of torsion of order $n$ (resp $m$), except when $i =m-1$ for which $x_1^{m-1}x_2^{n-1}y$ is of order $(m,n)$.  (Torsion part description)
\end{theorem}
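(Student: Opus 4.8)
The plan is to present $N_2(A)$ as an explicit quotient of $N_2(A_2)$ and pin down the kernel by computing inside $A_2/M_3(A_2)$. Recall (the two–generator case of Proposition 3.1 of \cite{BEJ+}) that $A_2/M_3(A_2)$ is generated by $x_1,x_2$ and $y:=[x_1,x_2]$, with $y$ central and $y^2=0$; hence it splits as a $\mathbb{Z}$-module as $\mathbb{Z}[x_1,x_2]\oplus\mathbb{Z}[x_1,x_2]\,y$, and $N_2(A_2)=M_2(A_2)/M_3(A_2)$ is the two-sided ideal it generates by $y$, i.e.\ the free module $\mathbb{Z}[x_1,x_2]\,y$ with basis $\{x_1^ix_2^j y\}_{i,j\ge 0}$. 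The canonical surjection $A_2\twoheadrightarrow A$ carries $[A_2,A_2]$ onto $[A,A]$, hence $M_2(A_2)$ onto $M_2(A)$ and $M_3(A_2)$ onto $M_3(A)$, so it induces a surjection $N_2(A_2)\twoheadrightarrow N_2(A)$ whose kernel is the image in $N_2(A_2)$ of $M_2(A_2)\cap I$, equivalently $N_2(A_2)\cap\overline{I}$, where $\overline{I}\subseteq A_2/M_3(A_2)$ is the image of the ideal $I=(x_1^m,x_2^n)$. Thus $N_2(A)\cong\bigl(\mathbb{Z}[x_1,x_2]\,y\bigr)/\bigl(\mathbb{Z}[x_1,x_2]\,y\cap\overline{I}\bigr)$, and the whole problem reduces to computing the submodule $\mathbb{Z}[x_1,x_2]\,y\cap\overline{I}$.

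I claim this submodule equals $J\,y$, where $J:=(x_1^m,\ x_2^n,\ mx_1^{m-1},\ nx_2^{n-1})\subseteq\mathbb{Z}[x_1,x_2]$. Granting this, $N_2(A)\cong\mathbb{Z}[x_1,x_2]\,y/J\,y\cong\mathbb{Z}[x_1,x_2]/J$ via $g\,y\mapsto g$, and reading off the abelian group $\mathbb{Z}[x_1,x_2]/J$ gives exactly the tables: the lattice $\mathbb{Z}[x_1,x_2]/(x_1^m,x_2^n)$ has monomial basis $\{x_1^ix_2^j\}_{0\le i<m,\,0\le j<n}$, and the generators $mx_1^{m-1}$, $nx_2^{n-1}$ put $\mathbb{Z}/m$ on the last $x_1$-column, $\mathbb{Z}/n$ on the last $x_2$-row, and $\mathbb{Z}/(m,n)$ at their overlap — which, transported back by $g\mapsto g\,y$, is precisely Theorem \ref{N_2 Torsion over Z} (together with the companion free-part statement, Theorem \ref{N_2 Basis over Z}). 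The inclusion $J\,y\subseteq\overline{I}$ is the quick half, giving the torsion orders as upper bounds: by the Leibniz rule (Lemma \ref{Leibniz Rule}) and centrality of $y$, $[x_1^m,x_2]=\sum_{k=0}^{m-1}x_1^k[x_1,x_2]x_1^{m-1-k}=mx_1^{m-1}y$ in $A_2/M_3(A_2)$; since $x_1^m\in\overline{I}$ this lies in $\overline{I}$, and multiplying by $x_1^ax_2^b$ gives $mx_1^{a+m-1}x_2^b y\in\overline{I}$ for all $a,b\ge 0$; symmetrically $nx_1^ax_2^{b+n-1}y\in\overline{I}$, while $x_1^mx_2^by,\ x_1^ax_2^ny\in\overline{I}$ directly.

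The substance of the argument is the reverse inclusion $\mathbb{Z}[x_1,x_2]\,y\cap\overline{I}\subseteq J\,y$. Here I would take an arbitrary element of $\overline{I}$, lift it to $z=\sum_k a_k x_1^m b_k+\sum_\ell c_\ell x_2^n d_\ell\in A_2$ with each $a_k,b_k,c_\ell,d_\ell$ a word in $x_1,x_2$, and compute the $y$-component (in the splitting $A_2/M_3(A_2)=\mathbb{Z}[x_1,x_2]\oplus\mathbb{Z}[x_1,x_2]\,y$) of each summand $a\,x_1^m b$. Commuting the block $x_1^m$ leftward through $b$ by the Leibniz rule — each $x_2$ it crosses contributing a multiple of $mx_1^{m-1}y$ — gives $x_1^m b=b\,x_1^m+m\,|b|_{x_2}\,x_1^{\,m-1+|b|_{x_1}}x_2^{\,|b|_{x_2}-1}\,y$, so $a\,x_1^m b=(ab)\,x_1^m+m\,|b|_{x_2}\,a\,x_1^{\,m-1+|b|_{x_1}}x_2^{\,|b|_{x_2}-1}\,y$. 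The $y$-component of $(ab)\,x_1^m$ is the $y$-component of $ab$ times $x_1^m$, hence divisible by $x_1^m$; and since $y^2=0$, the $y$-component of the second term is $m\,|b|_{x_2}\cdot(\text{abelianization of }a)\cdot x_1^{\,m-1+|b|_{x_1}}x_2^{\,|b|_{x_2}-1}$, divisible by $mx_1^{m-1}$. The $x_2^n$-summands are handled identically, and summing shows the $y$-component of $z$ lies in $J$, as required. (Cleanly stated, this is the submodule inclusion $(A_2/M_3(A_2))\cdot x_1^m\cdot(A_2/M_3(A_2))\subseteq x_1^m\mathbb{Z}[x_1,x_2]\oplus(x_1^m,mx_1^{m-1})\,y$, proved by checking it on each of the two $\mathbb{Z}$-module summands.)

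The main obstacle is exactly this reverse inclusion: one must uniformly control the noncommutative correction terms — the $\mathbb{Z}[x_1,x_2]\,y$-part created when a word is normal-ordered via $x_2x_1=x_1x_2-y$ — for a completely arbitrary two-sided combination $\sum a_k x_1^m b_k+\sum c_\ell x_2^n d_\ell$, and confirm that projecting to the $y$-component never escapes $J$. The commutator computation above is how I would keep this in check; once the submodule inclusion is in hand, the exact torsion orders of $x_1^ix_2^{n-1}y$, $x_1^{m-1}x_2^jy$ and $x_1^{m-1}x_2^{n-1}y$ asserted in Theorem \ref{N_2 Torsion over Z} fall straight out of the structure of $\mathbb{Z}[x_1,x_2]/J$.
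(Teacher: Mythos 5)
Your proof is correct, but it takes a genuinely different and in fact more complete route than the paper. The paper's own argument is a direct term-by-term computation: it expands $0=[x_1^m,x_2]$ via the Leibniz rule to get $mx_1^{m-1}y=0$ (and symmetrically $nx_2^{n-1}y=0$), multiplies by monomials, and invokes B\'ezout at the corner $(m,n)$. This only establishes that $m$, $n$, resp.\ $(m,n)$ \emph{annihilate} the elements in question, i.e.\ upper bounds on the orders; the claim that these are the exact orders (that no smaller multiple dies) is asserted rather than proved, implicitly leaning on the \emph{MAGMA} data. Your approach instead computes the full kernel of $N_2(A_2)\twoheadrightarrow N_2(A)$ by identifying $\mathbb{Z}[x_1,x_2]\,y\cap\overline{I}=J\,y$ with $J=(x_1^m,x_2^n,mx_1^{m-1},nx_2^{n-1})$: the easy inclusion recovers exactly the paper's Leibniz computations, while the reverse inclusion — normal-ordering an arbitrary two-sided combination $\sum a_kx_1^mb_k+\sum c_\ell x_2^nd_\ell$ and checking that the $y$-component never escapes $J$ — supplies the missing lower bounds, since $J$ is generated by monomials and integer multiples of monomials, so the exact orders $m$, $n$, $(m,n)$ can be read off coefficient by coefficient. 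As a bonus your presentation $N_2(A)\cong\mathbb{Z}[x_1,x_2]/J$ reproves Theorem \ref{N_2 Basis over Z} simultaneously. The one foundational input you use beyond the paper's is the $\mathbb{Z}$-module splitting $A_2/M_3(A_2)=\mathbb{Z}[x_1,x_2]\oplus\mathbb{Z}[x_1,x_2]\,y$ (freeness of both summands over $\mathbb{Z}$), which is the two-generator Feigin--Shoikhet description underlying Theorem \ref{BEJ+ A_2/M_3} and Proposition \ref{Basis of N_2(A_2)}; granting that, and the modular-law identification of the kernel with the image of $M_2(A_2)\cap I$, your argument is complete and strictly stronger than the one in the paper.
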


Our chain of reasoning in proving Theorem \ref{N_2 Basis over Z} starts with forming a basis of $M_2(A_2)$ (Lemma \ref{Basis of M_2}). This induces a generating family of $N_2(A_2)=M_2(A_2)/M_3(A_2)$ with eventually some redundancy.  In order to eliminate this redundancy, we will rewrite these elements using $R$ to arrive to a normal form and obtain a basis of $N_2(A_2)$.  Finally, if we take into account the extra relations of $A$ to find as basis of $N_2 (A)$ (Theorem \ref{N_2 Basis over Z}), then some torsion appears.  This torsion part induced by the relations will be separated from the free part of $N_2(A)$.

Let us recall a presentation of $A/M_3$ from \cite{BEJ+}, inspired by the seminal paper \cite{FS} by Feigin and Shoikhet.

\begin{theorem}
\label{BEJ+ A_2/M_3}
$A_2/M_3 = \langle x_1,x_2,y \rangle /(R)$ where $R$ is the set of relations 

\begin{equation}
\label{BEJ rel 1}
[x_1,x_2]=y,
\end{equation}
\begin{equation}
\label{BEJ rel 2}
[x_1,y]=[x_2,y]=y^2=0.
\end{equation}
\end{theorem}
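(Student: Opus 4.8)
The plan is to prove Theorem~\ref{BEJ+ A_2/M_3} by carefully tracking what the presentation of $A_n/M_3$ from Prop.~3.1 in \cite{BEJ+} (recalled above as the Fact) specializes to when $n=2$, and checking that the surviving relations and generators are exactly those listed. First I would instantiate the Fact with $n=2$: the generators are $x_1, x_2$ together with the $u_{ij}$ for $i\neq j$ in $\{1,2\}$, so only $u_{12}$ and $u_{21}$ appear. Relation (1) gives $u_{12}=[x_1,x_2]$ and $u_{12}+u_{21}=0$, so $u_{21}=-u_{12}$ is redundant; setting $y:=u_{12}$ recovers $[x_1,x_2]=y$, which is \eqref{BEJ rel 1}. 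Relations (2) and (3) with only the index pair $\{1,2\}$ available give $[x_i, u_{12}]=0$ for $i=1,2$ and $[u_{12},u_{12}]=0$ (vacuous); translating, $[x_1,y]=0$ and $[x_2,y]=0$, two of the three relations in \eqref{BEJ rel 2}. Relation (4) says $u_{ij}u_{kl}=0$ whenever the multiset $\{i,j,k,l\}$ has a repetition; with $n=2$ every such product, in particular $u_{12}u_{12}$, has a repeated index, so $y^2=0$, the last relation in \eqref{BEJ rel 2}. Relation (5) requires four distinct indices $i,j,k,l$, which is impossible when $n=2$, so it contributes nothing.

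Having matched the relations, I would then argue the two presentations are genuinely isomorphic as algebras, not merely that one is a quotient of the other. The substitution $y\mapsto u_{12}$, $u_{21}\mapsto -u_{12}$ defines a surjective algebra map from $\langle x_1,x_2,y\rangle/(R)$ onto $A_2/M_3$, since every relation in $R$ is a consequence of relations (1)--(5) as shown above. Conversely, every defining relation (1)--(5) of $A_2/M_3$ (with $n=2$) is a consequence of $R$ together with the definitions $u_{12}=[x_1,x_2]$, $u_{21}=-u_{12}$, so there is an inverse map; this gives the isomorphism. One should double check there are no extra relations from \cite{BEJ+} that were suppressed — e.g.\ that the presentation in the Fact is complete and that no implicit grading or normalization conventions differ — but this is bookkeeping.

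The main obstacle I anticipate is not conceptual but careful index-chasing: one must be sure that \emph{all} instances of relations (4) and (5) are accounted for when $n=2$, and in particular that relation (5), the only one producing a genuinely noncommutative-looking identity among the $u$'s, is entirely vacuous here (it needs four distinct indices). A secondary point to verify is that the relation $[x_1,y]=[x_2,y]=0$ together with $[x_1,x_2]=y$ and $y^2=0$ does not accidentally collapse the algebra further than $A_2/M_3$; this is guaranteed by the isomorphism direction above, but it is worth noting that $y=[x_1,x_2]$ being central and square-zero is exactly the structure that makes $A_2/M_3$ the expected object (a Heisenberg-like extension). Since Theorem~\ref{BEJ+ A_2/M_3} is stated as a recollection of a known result from \cite{BEJ+}, the cleanest proof is simply to cite Prop.~3.1 of \cite{BEJ+} and perform this $n=2$ specialization explicitly, which is what I would write.
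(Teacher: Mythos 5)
Your proposal is correct, and it matches the paper's intent: the paper offers no proof of this statement at all, merely recalling it from \cite{BEJ+}, so your explicit $n=2$ specialization of Prop.~3.1 (the Fact) is a valid way to fill in what the paper leaves to citation. The index-chasing you flag is indeed the only content --- with $n=2$ relation (5) is vacuous, (4) reduces to $y^2=0$, and $u_{21}=-u_{12}$ eliminates the redundant generator --- and your two-way check that each relation set implies the other correctly upgrades the comparison to an isomorphism of presentations.
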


Below are three tools we use intermediately to prove \ref{N_2 Basis over Z}:

\begin{remark}
\label{Basis of A_n}
A basis of $A_n$, denoted $\mathcal{B}(A_n)$, is given by monomials in the generators $x_1, \cdots, x_n$.
\end{remark}

\begin{lemma}
\label{Basis of M_2}
A basis of $M_2(A_2)$
is given by $\{ v y w \mid v,w \in \mathcal{B}(A_n) \}$.
\end{lemma}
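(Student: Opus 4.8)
The plan is to verify separately that the family $\mathcal{F}:=\{\,vyw\mid v,w\in\mathcal{B}(A_2)\,\}$ \emph{spans} $M_2(A_2)$ over $\mathbb{Z}$ and that it is \emph{$\mathbb{Z}$-linearly independent}; throughout, $y$ denotes the element $[x_1,x_2]\in A_2$ of the presentation recalled in Theorem~\ref{BEJ+ A_2/M_3}. Note that $M_2(A_2)$, as a subgroup of the free abelian group $A_2$, is itself free, so that ``$\mathbb{Z}$-basis'' is meaningful.

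\emph{Spanning.} By definition $M_2(A_2)=A_2\,L_2(A_2)\,A_2=A_2\,[A_2,A_2]\,A_2$, and since $[\,\cdot\,,\,\cdot\,]$ is $\mathbb{Z}$-bilinear, $M_2(A_2)$ is the $\mathbb{Z}$-span of the elements $a\,[u,u']\,b$ with $a,b,u,u'$ monomials in $x_1,x_2$ (Remark~\ref{Basis of A_n}). Writing $u=x_{i_1}\cdots x_{i_p}$, $u'=x_{j_1}\cdots x_{j_q}$ and applying the Leibniz Rule (Lemma~\ref{Leibniz Rule}) twice — once to expand $[u,u']$ over the letters of $u$, then to expand each resulting $[x_{i_k},u']$ over the letters of $u'$ — rewrites $[u,u']$ as a $\mathbb{Z}$-combination of terms $p\,[x_i,x_j]\,q$ with $p,q$ monomials. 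Since $[x_1,x_1]=[x_2,x_2]=0$ and $[x_2,x_1]=-[x_1,x_2]=-y$ by~(\ref{BEJ rel 1}), and since the outer monomials $a,b$ can be absorbed into $p,q$, every generator $a[u,u']b$ lies in $\operatorname{span}_{\mathbb{Z}}\mathcal{F}$; hence $M_2(A_2)=\operatorname{span}_{\mathbb{Z}}\mathcal{F}$.

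\emph{Independence.} Since $A_2$ is bigraded by $(|\cdot|_{x_1},|\cdot|_{x_2})$, so is $M_2(A_2)=\bigoplus_{(a,b)}M_2(A_2)_{(a,b)}$; each summand is free of finite rank, and every $vyw$ is bihomogeneous, so it suffices to work in a fixed bidegree $(a,b)$. A useful handle is that $A_2/M_2(A_2)$ is the abelianization of $A_2$, hence isomorphic to $\mathbb{Z}[x_1,x_2]$ with the ordered monomials as a $\mathbb{Z}$-basis; comparing this with the monomial basis of $A_2$ presents $M_2(A_2)_{(a,b)}$ as the bidegree-$(a,b)$ part of $\ker\!\big(A_2\twoheadrightarrow\mathbb{Z}[x_1,x_2]\big)$, a free abelian group of explicitly computable rank. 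The plan is then to recover the coefficients $c_{v,w}$ from an arbitrary $f=\sum_{v,w}c_{v,w}\,vyw\in M_2(A_2)$: expanding $y=x_1x_2-x_2x_1$, the coefficient of a word $W$ in $f$ equals $\sum_{W=vx_1x_2w}c_{v,w}-\sum_{W=vx_2x_1w}c_{v,w}$, and one wants to invert this linear system by fixing a total order on the pairs $(v,w)$ — e.g. lexicographic in $v$, then in $w$, tie-broken by length — along which it becomes triangular. Triangularity gives injectivity of $(c_{v,w})\mapsto f$, i.e. independence, and together with the spanning above shows $\mathcal{F}$ is a basis.

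I expect the independence step to be the main obstacle. The difficulty is that a single word $W$ can display the factor $x_1x_2$ (and likewise $x_2x_1$) in several distinct positions — for instance $W=(x_1x_2)^k$ — so that no ``leading word'' is attached injectively to a pair $(v,w)$, and a naive leading-term argument fails; the combinatorics of these overlaps is exactly what must be controlled to build the triangularizing order, equivalently to rule out any $\mathbb{Z}$-linear relation among the $vyw$ beyond those already implied by spanning. Once Lemma~\ref{Basis of M_2} is established, reducing $\mathcal{F}$ modulo $M_3(A_2)$ yields the (still redundant) generating family of $N_2(A_2)$ that the remainder of the section whittles down, using the relations $R$, to the basis of Theorem~\ref{N_2 Basis over Z}.
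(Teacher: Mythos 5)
Your spanning argument is exactly the paper's: expand $[u,u']$ by the Leibniz rule down to single letters, absorb the outer monomials, and conclude that the elements $vyw$ generate $M_2(A_2)$ over $\mathbb{Z}$. That half is correct and complete. The independence half, however, is only a plan, and the obstacle you flag (a word can exhibit the factor $x_1x_2$ in several positions, so no leading word is attached injectively to a pair $(v,w)$) is not a technical nuisance to be engineered around --- it is fatal, because the family is in fact \emph{not} linearly independent. Already in bidegree $(2,2)$ one has
$$y\,x_1x_2 \;-\; y\,x_2x_1 \;-\; x_1x_2\,y \;+\; x_2x_1\,y \;=\; y\cdot(x_1x_2-x_2x_1)\;-\;(x_1x_2-x_2x_1)\cdot y\;=\;y^2-y^2\;=\;0,$$
a nontrivial $\mathbb{Z}$-linear relation among four distinct members of the family. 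A rank count confirms that no ordering can triangularize your linear system: in bidegree $(a,b)$ the family has $(a+b-1)\binom{a+b-2}{a-1}$ members (Vandermonde), while $M_2(A_2)_{(a,b)}$, being the bidegree-$(a,b)$ part of $\ker\bigl(A_2\twoheadrightarrow\mathbb{Z}[x_1,x_2]\bigr)$ as you correctly note, has rank $\binom{a+b}{a}-1$; at $(2,2)$ these numbers are $6$ and $5$.

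So the statement is true only as a claim that the $vyw$ form a generating set. The paper's own proof disposes of independence with the phrase ``it is simply a routine checking,'' which is precisely the step that fails; your instinct that this is where the difficulty lies was right, but the correct resolution is a counterexample, not a cleverer order on the pairs $(v,w)$. Fortunately nothing downstream is damaged: the lemma is used only to produce a generating family of $N_2(A_2)=M_2(A_2)/M_3(A_2)$ ``with eventually some redundancy,'' and the subsequent normal-form argument needs only spanning. Keep your spanning proof, record the relation above, and drop the attempt to establish independence.
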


\begin{proposition}
\label{Basis of N_2(A_2)}
A basis of $N_2(A_2)$ as a $\mathbb{Z}$-module is given by $\{x_1^{i} x_2^{j} y \}$.
\end{proposition}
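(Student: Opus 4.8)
\textbf{Proof plan for Proposition \ref{Basis of N_2(A_2)}.}

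The plan is to start from the basis $\{v y w \mid v, w \in \mathcal{B}(A_2)\}$ of $M_2(A_2)$ provided by Lemma \ref{Basis of M_2}, pass to the quotient $N_2(A_2) = M_2(A_2)/M_3(A_2)$, and use the relations $R$ from Theorem \ref{BEJ+ A_2/M_3} to put every element $\overline{v y w}$ into a normal form $x_1^i x_2^j y$. Concretely, $v$ and $w$ are monomials in $x_1, x_2$; the relations $[x_1,y] = [x_2,y] = 0$ say that $y$ is central modulo $M_3$, so $\overline{v y w} = \overline{v w\, y}$, reducing the problem to monomials of the form $\overline{u y}$ with $u = vw$ a monomial in $x_1, x_2$. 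Then I would use $[x_1, x_2] = y$ to sort the variables in $u$: swapping an adjacent pair $x_2 x_1 \rightsquigarrow x_1 x_2$ introduces an error term $x_2 x_1 = x_1 x_2 - y$, and the correction $(\text{monomial})\cdot y \cdot y$ that this spawns lies in $M_3$ (indeed $y^2 = 0$ already in $A_2/M_3$, and more to the point any word containing two $y$'s lies in $M_3(A_2)$), so modulo $M_3(A_2)$ each such swap costs nothing. Hence $\overline{u y} = \overline{x_1^i x_2^j y}$ where $i = |u|_{x_1}$, $j = |u|_{x_2}$. This shows $\{x_1^i x_2^j y\}_{i,j \ge 0}$ spans $N_2(A_2)$.

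\textbf{Linear independence.} For the second half I would exhibit a $\mathbb{Z}$-linear map witnessing that no nontrivial $\mathbb{Z}$-combination of the $x_1^i x_2^j y$ lies in $M_3(A_2)$. The cleanest route is to use the explicit description $A_2/M_3 = \langle x_1, x_2, y\rangle/(R)$ from Theorem \ref{BEJ+ A_2/M_3} together with $N_2(A_2) \subset A_2/M_3$ being the image of $M_2$: one checks that in $\langle x_1,x_2,y\rangle/(R)$ the monomials $x_1^i x_2^j y$ are $\mathbb{Z}$-linearly independent, e.g. by constructing a PBW-type normal form for $\langle x_1,x_2,y\rangle/(R)$ (the relations let one order words as $x_1^a x_2^b y^c$ with $c \le 1$, and these are a free $\mathbb{Z}$-basis by a diamond-lemma/rewriting argument), and then noting $N_2(A_2)$ is exactly the span of those with $c = 1$. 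Alternatively, and perhaps more in the spirit of the surrounding computations, one can bigrade everything and work degree by degree: in bidegree $(i+1, j+1)$ the element $x_1^i x_2^j y$ is (up to the identification $y = [x_1,x_2]$) the unique surviving generator of $N_2$, which can be verified against the free-algebra computation.

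\textbf{Main obstacle.} The spanning half is routine bookkeeping with the Leibniz rule (Lemma \ref{Leibniz Rule}) and the observation that two-$y$ words sit in $M_3$; the real work is the linear independence. The subtlety is that we are over $\mathbb{Z}$, so ``independence'' must be established integrally, not just over a field, which is exactly why one wants either an honest $\mathbb{Z}$-basis for $A_2/M_3$ via a confluent rewriting system on $\langle x_1,x_2,y\rangle/(R)$, or a family of $\mathbb{Z}$-valued functionals on $A_2$ (built from reading off coefficients of chosen monomials) that separate the $x_1^i x_2^j y$ and annihilate $M_3(A_2)$. I expect confirming confluence of the rewriting system for $(R)$ — checking that the overlap ambiguities among $[x_1,x_2]-y$, $[x_1,y]$, $[x_2,y]$, $y^2$ all resolve — to be the one genuinely load-bearing verification, after which the result drops out. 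This also sets up the passage to $N_2(A)$ in Theorems \ref{N_2 Basis over Z} and \ref{N_2 Torsion over Z}, where imposing $x_1^m = x_2^n = 0$ is what creates the torsion $\mathbb{Z}_m, \mathbb{Z}_n, \mathbb{Z}_{(m,n)}$ recorded in the table.
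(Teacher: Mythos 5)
Your spanning argument is essentially identical to the paper's: starting from the basis of $M_2(A_2)$ in Lemma \ref{Basis of M_2}, the paper also pushes $y$ to the right using $[x_1,y]=[x_2,y]=0$ and then commutes $x_1$ and $x_2$ past each other in any word containing a $y$, the correction term $u y^2 w$ dying because of $y^2=0$ (equivalently, because two-$y$ words lie in $M_3$). The linear independence over $\mathbb{Z}$, which you rightly single out as the load-bearing step, is simply asserted in the paper ("the set \dots is now linearly independent in the quotient"), so your proposed verification via a confluent rewriting system for $\langle x_1,x_2,y\rangle/(R)$ or via the bidegree-by-bidegree comparison would supply a justification the paper omits rather than diverge from it.
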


\begin{proof}
Recall the definition of $M_2(A_2) = A_2 L_2(A_2) A_2 = A_2[A_2,A_2] A_2.$   Any element in this $M_2$ is a linear combination of $u[v,w]z$ where $u,v,w,z \in \mathcal{B}(A_2)$.

The Leibniz rule gives $u[v,w]z = u(\sum v_i y w_i)z$ for some $v_i, w_i \in \mathcal{B}(A_2)$; note this means that there is at least one $y$ term in each monomial, and $M_2$ is spanned by $\{v' y w' \mid v', w' \in \mathcal{B}(A_2) \}$.
It is simply a routine checking to verify the linear independence of this basis.  $\square$

\end{proof}

We now prove Proposition \ref{Basis of N_2(A_2)}.

\begin{proof}
Starting with a basis $\mathcal{B}_2$ of $M_2(A_2)$ given by Theorem \ref{BEJ+ A_2/M_3}, we use the relations from Theorem \ref{N_2 Basis over Z} to rewrite the elements of its image $\bar{\mathcal{B}_2}$ in $N_2(A_2) = M_2 (A_2) / M_3(A_2)$ in a normal form.  Using relation (2), we may commute $y$ anywhere within, so we push them to the right of every term by convention.  

We now show that $x_1$ and $x_2$ commute in monomials which contain a $y$.  Let $u,w \in \mathcal{B}(A_2)$:
$$u x_2 x_1 w y \stackrel{(\ref{BEJ rel 1})}{=} u (x_1 x_2 - y) w y = u x_1 x_2 w y - u y^2 \stackrel{(\ref{BEJ rel 1})}{=} ux_1 x_2 w y.$$
Thus, any element of $\bar{\mathcal{B}_2}$ may be rewritten in the form of $x_1^i x_2^j y$; the set of all such elements is still a generating family, but now is linearly independent in the quotient. $\square$
\end{proof}

These set us up for the proof of Theorem \ref{N_2 Basis over Z}.
\begin{proof}

Now, we finally work with $N_2(A)$.
To show that $0 \le i \le m-1$ and $0 \le j \le n-1$, recall that $A$ has the additional relations $x_1^m, x_2^n$, so if $i \ge m$ or $j \ge n$, then $x_1^i x_2^j y$ vanishes.
But, for $0 \le i \le m-1$ and $0 \le j \le n-1$, no torsion can occur in total degree $i+j < m$ or $n$.
This is because $m$ and $n$ are the degrees of $A$'s relations.  $\square$
\end{proof}

And Theorem \ref{N_2 Torsion over Z}:

\begin{proof}
For each bidegree with torsion, we specifically calculate the terms causing torsion.  For example, to find those with bidegree $(m,1)$, we first note that the term must be of the form $x_1^m y$ by Proposition \ref{Basis of N_2(A_2)}.  The generators of $N_2(A)$ are the images of the generators of $N_2(A_2)$ modulo relations $x_1^m = 0, x_2^n = 0$.  To show its torsion, note that:
$$0 = [x_1^m, x_2] = \sum_{s = 0}^{m-1} x_1^s [x_1,x_2] x_1^{m-s-1} = \sum_{s= 0}^{m-1} x_1^s y x_1^{m-s-1} = \sum_{s=0}^{m-1} x_1^{m-1} y = mx_1^{m-1}y.$$

Similarly, we find that $nx_2^{n-1}y = 0.$

Thus, for all $j < n$, we have that $mx_1^{m-1}x_2^{j}y = 0$, so there is $\mathbb{Z}_m$ torsion there.  Likewise, we find $nx_1^{i} x_2^{n-1} y = 0$ for $i < m$, so there is $\mathbb{Z}_n$ torsion there.

However, let us consider what happens with $x_1^{m-1}x_2^{n-1}y$.  We know that $mx_1^{m-1}x_2^{n-1}y = nx_1^{m-1}x_2^{n-1}y = 0$.  Let $k$ be the order of $x_1^{m-1}x_2^{n-1}y$; then, since for all $a,b$, $amx_1^{m-1} x_2^{n-1}y = bnx_1^{m-1}x_2^{n-1}y = 0$, by Bezout's Lemma we have $k \mid (m,n)$.
Thus, the term generates the group $\mathbb{Z}_{(m,n)}$.   $\square$
\end{proof}

\subsection{Structure of $N_3$}
In this section, we prove that the non-zero terms in the bigraded structure of $N_3$ are given by the following table:

\begin{table} [H] 
\small
\caption{Bigraded Description of $N_3 (A)$}
\label{Bigraded Description of $N_3 (A)$}
\begin{tabular}{|l||l|l|l|ll|l|l|l|}
\hline
$(m,n)$ & $0$ & $1$ & $2$ & $\cdots$ &  $\cdots$ & $n-1$ & $n$ & $n+1$  \\
\hline
\hline
$0$ & $0$ & $\cdots$ & $\cdots$ & $\cdots$ & $\cdots$ & $\cdots$ & $\cdots$ & $\cdots$ \\
\hline
$1$ & $\vdots$ & $0$ & $\mathbb{Z}$ & $\cdots$ & $\cdots$ & $\mathbb{Z}$ & $\mathbb{Z}$ & $\mathbb{Z}_{f(n)}$\\
\hline
$2$ & $\vdots$ & $\mathbb{Z}$ & $\mathbb{Z}^3$ & $\cdots$ & $\cdots$ & $\mathbb{Z}^3$ & $\mathbb{Z}^2 \oplus \mathbb{Z}_n$ & $\mathbb{Z}_n \oplus \mathbb{Z}_{f(n)}$  \\
\hline
$\ \vdots$ & $\vdots$ & $\vdots$ & $\vdots$ & $\ddots$ & $$ & $\vdots$ & $\vdots$ & $\vdots$  \\

$\ \vdots$ & $\vdots$ & $\vdots$ & $\vdots$ & $$ & $\ddots$ & $\vdots$ & $\vdots$ & $\vdots$ \\
\hline
$m-1$ & $\vdots$ & $\mathbb{Z} $ & $\mathbb{Z}^3$ & $\cdots$ & $\cdots$ & $\mathbb{Z}^3$ & $\mathbb{Z}^2 \oplus \mathbb{Z}_n$ & $\mathbb{Z}_n \oplus \mathbb{Z}_{f(n)}$ \\ \hline
$m$ & $\vdots$ & $\mathbb{Z}$ & $\mathbb{Z}^2 \oplus \mathbb{Z}_m$ & $\cdots$ & $\cdots$ & $\mathbb{Z}^2 \oplus \mathbb{Z}_m$ & $\mathbb{Z}_m \oplus \mathbb{Z}_n$ & $\mathbb{Z}_{f(n)} \oplus \mathbb{Z}_{(m,n)}$\\
\hline
$m+1$ & $\vdots$ & $\mathbb{Z}_{f(m)}$ & $\mathbb{Z}_m \oplus \mathbb{Z}_{f(m)}$ & $\cdots$ & $\cdots$ & $\mathbb{Z}_m \oplus \mathbb{Z}_{f(m)}$ & $\mathbb{Z}_{f(m)} \oplus \mathbb{Z}_{(m,n)}$ & $\mathbb{Z}_{(m,n)}$ \\
\hline
\end{tabular}
\end{table}

Where $(m,n) = gcd(m,n)$ and
\begin{definition}
\label{gcdDef}
The function $f: \mathbb{N} \longrightarrow \mathbb{N}$ is defined by
$$f(k) := \begin{cases}
k & k \text{ odd} \\ 
\dfrac{k}{2} & k \text{ even}.
\end{cases}
 $$
\end{definition}
In addition to the notation $y := [x_1, x_2]$ from the previous section, we introduce the following two terms: $z_1 := [x_1, y]$, $z_2 := [x_2, y].$

In this section, we prove the following lemmas about the structure of  $N_3(A)$ using tools similar to those from the previous section:

\begin{lemma}
\label{N_3 Basis}
The free part of $N_3$ is generated as a $\mathbb{Z}$-module by the following terms: $x_1^i x_2^j z_1$, $x_1^i x_2^j z_2$, $x_1^i x_2^j y^2$, for $0 \le i \le m-1$, $0 \le j \le n-1$.
(Free part description)
\end{lemma}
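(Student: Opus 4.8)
The plan is to mimic the strategy used for $N_2$: start from a spanning set of $M_3(A_2)$, pass to the quotient $N_3(A_2) = M_3(A_2)/M_4(A_2)$, rewrite everything into a normal form using the relations of $A_2/M_4$, and then impose the extra relations $x_1^m = x_2^n = 0$ to cut down to the stated degree range. First I would establish, analogously to Lemma \ref{Basis of M_2}, that $M_3(A_2)$ is spanned by monomials containing at least one of $z_1$, $z_2$, or $y^2$: indeed $M_3 = A_2 L_3 A_2$ and $L_3 = [A_2, L_2]$, so by the Leibniz rule (Lemma \ref{Leibniz Rule}) every element $u[w, [v_1, v_2]]z$ expands as a sum of monomials in which a bracket $[x_a, y]$ (i.e.\ $z_1$ or $z_2$) or a product $[x_a,x_b]\cdot(\text{something with a }y)$ — hence a $y^2$ after further expansion — appears. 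So $N_3(A_2)$ is generated by images of monomials $v \cdot t \cdot w$ with $t \in \{z_1, z_2, y^2\}$ and $v,w$ words in $x_1,x_2$.

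The core step is the normal-form reduction, which requires knowing the relations of $A_2/M_4$ well enough to commute $x_1,x_2$ past $z_1,z_2,y^2$ and past each other modulo $M_4$. Here I would invoke (or re-derive) the structural facts: $y^2$, $z_1$, $z_2$ are central modulo $M_4$ up to terms that land in $M_4$; $[x_1,x_2] = y$ but $[x_i, y^2] \in M_4$, $[x_i, z_j] \in M_4$; and crucially the relation forcing $x_1, x_2$ to commute inside any monomial already containing one of $z_1,z_2,y^2$ — the same trick as in the $N_2$ proof, since commuting $x_2 x_1 \to x_1 x_2$ produces an extra $y$ factor, and $y \cdot z_i$, $y\cdot y^2 = y^3$ already lie deep enough in the lower central filtration to vanish in $N_3$. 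One also needs the identity $[x_1,z_2] = [x_2,z_1]$ (Jacobi) modulo $M_4$, which is what collapses what would naively be four generators ($z_1, z_2$ and the two orderings) down to the $\mathbb{Z}^3$ seen generically in the table, i.e.\ exactly the three families $x_1^i x_2^j z_1$, $x_1^i x_2^j z_2$, $x_1^i x_2^j y^2$. After these reductions every spanning monomial is rewritten as $x_1^i x_2^j \cdot t$ with $t \in \{z_1, z_2, y^2\}$, giving a generating set for $N_3(A_2)$.

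Finally I would bring in the relations of $A = A_2/(x_1^m, x_2^n)$. Since these relations are homogeneous of bidegree $(m,0)$ and $(0,n)$, and $z_1, z_2, y^2$ have bidegrees $(2,1), (1,2), (2,2)$ respectively, the images $x_1^i x_2^j t$ with $i \geq m$ or $j \geq n$ vanish in $N_3(A)$, leaving precisely the families in the statement. For the \emph{free} part claim one argues that in total degree strictly below $\min(m,n)$ (more precisely, before the relations can act) nothing new is killed, so these elements are $\mathbb{Z}$-linearly independent modulo torsion; this is where I expect the main obstacle, since unlike $N_2$ the three families $z_1, z_2, y^2$ interact, and one must be careful that the reduction to normal form has not introduced hidden $\mathbb{Z}$-linear dependencies among them — establishing genuine linear independence of $\{x_1^i x_2^j z_1, x_1^i x_2^j z_2, x_1^i x_2^j y^2\}$ in $N_3(A_2)$ presumably requires either an explicit dual/pairing argument or a careful dimension count against the known Hilbert series of $N_3(A_2)$ over $\mathbb{Q}$. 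The torsion statements (the $\mathbb{Z}_m$, $\mathbb{Z}_n$, $\mathbb{Z}_{f(\cdot)}$, $\mathbb{Z}_{(m,n)}$ entries) are deferred to subsequent lemmas, so for this lemma it suffices to exhibit the generating set and confirm that everything outside the listed bidegrees dies.
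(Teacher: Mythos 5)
Your proposal follows essentially the same route as the paper: it first shows that $M_3$ is spanned by monomials containing $z_1$, $z_2$, or two $y$'s (the paper's Proposition \ref{M_3 Basis}, proved via the Leibniz rule and the Jacobi identity), then uses the presentation of $A_2/M_4$ to commute the $x_i$'s into the normal form $x_1^i x_2^j t$ with $t\in\{z_1,z_2,y^2\}$, and finally imposes $x_1^m=x_2^n=0$ to restrict the exponent range. The paper is just as brief as you are about the linear independence of the resulting family, so deferring that point matches its level of detail.
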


\begin{lemma}
\label{N_3 Torsion}
As a $\mathbb{Z}$-module, the $x_1^{m-1} x_2^{j}y^2$ and $x_1^{m-1} x_2^{j+1} z_1$ (resp $x_1^i x_2^{n-1} y^2$ and $x_1^{i+1} x_2^{m-1} z_2$) terms are of torsion of order $m$ and $f(m)$ (resp $ n $ and $ f(n) $), except when $j=n-1$, for which $x_1^{m-1}x_2^{n-1}y$ is of order $(m,n)$.  (Torsion part description)

\end{lemma}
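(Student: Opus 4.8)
The plan is to mirror the strategy used for $N_2$ in Theorem \ref{N_2 Torsion over Z}, but now applied to each of the three families of generators $x_1^i x_2^j z_1$, $x_1^i x_2^j z_2$, $x_1^i x_2^j y^2$ listed in Lemma \ref{N_3 Basis}. The essential mechanism is the same: the relations $x_1^m = 0$ and $x_2^n = 0$ in $A$ force certain commutator expansions to collapse, and expanding $[x_1^m, -]$ or $[x_2^n, -]$ via the Leibniz rule (Lemma \ref{Leibniz Rule}) produces an integer multiple of a single basis element of $N_3$, which must therefore vanish. So the first step is to work out, in $A_2/M_4$ (using the presentation analogous to Theorem \ref{BEJ+ A_2/M_3}, with generators $x_1, x_2, y, z_1, z_2$ and relations expressing that $z_1, z_2, y^2$ are central modulo $M_4$ and that products among them lie in $M_4$), normal forms showing that $x_1$ and $x_2$ commute inside any monomial containing one of $z_1, z_2, y^2$, exactly as in the proof of Proposition \ref{Basis of N_2(A_2)}. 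This justifies writing every generator as $x_1^i x_2^j (\ast)$.

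Next I would compute the torsion relations directly. For the $y^2$ family: since $0 = [x_1^m, x_2]$ still holds in $A$, expanding by Leibniz and reducing gives $0 = m x_1^{m-1} y$ in $N_2$, but we need the $N_3$-level statement; applying the same expansion to $[x_1^m, y]$ or multiplying the $N_2$ relation by $y$ gives $m x_1^{m-1} y^2 = 0$, hence $\mathbb{Z}_m$-torsion on $x_1^{m-1} x_2^j y^2$ for $j < n$, and symmetrically $n x_1^i x_2^{n-1} y^2 = 0$. For the $z_1$ family the factor $f(m)$ should arise as follows: expand $[x_1^m, y] = \sum_{s=0}^{m-1} x_1^s [x_1, y] x_1^{m-1-s} = \sum_s x_1^{m-1} z_1 = m x_1^{m-1} z_1$, but there is a second, competing relation coming from $[x_1^{m}, x_2]$ pushed one degree further — namely applying $[-, x_2]$ to the $N_2$ relation $m x_1^{m-1} y = 0$ and reorganizing — which yields something like $\binom{m}{2} x_1^{m-2} z_1$ (the coefficient of the second-order term in the Leibniz expansion of $x_1^m$), and it is the interaction of $m$ and $\binom{m}{2}$ that produces $\gcd$-type behavior collapsing to $f(m) = m$ or $m/2$ according to the parity of $m$. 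I would make this precise by writing out the degree-$(m+1,1)$ component explicitly: the relations there are generated by the images of $[x_1^m, x_2]$-type and $[x_1^{m-1} y, x_1]$-type expressions, and a short $\mathbb{Z}$-linear-algebra computation on the two-or-three generators in that bidegree pins the torsion at $f(m)$.

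Finally, the corner cases: the top bidegrees $(m+1, n+1)$, $(m+1, n)$, $(m, n+1)$ require combining \emph{both} families of relations (those from $x_1^m$ and those from $x_2^n$) acting on the same element, and here — just as in the last paragraph of the proof of Theorem \ref{N_2 Torsion over Z} — one gets that the order $k$ of the relevant generator divides both the "$x_1$-side" order and the "$x_2$-side" order, so by Bézout $k \mid (m,n)$, and one shows equality by exhibiting that $(m,n)$ times the generator is the smallest vanishing multiple; the statement that $x_1^{m-1} x_2^{n-1} y$ has order $(m,n)$ is literally inherited from Theorem \ref{N_2 Torsion over Z} via the surjection $N_2 \to$ (relevant piece), and the $y^2$, $z_1$, $z_2$ corner entries follow by the same Bézout argument applied to their respective pairs of divisibility constraints. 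The main obstacle I anticipate is bookkeeping in the $z_1, z_2$ families: unlike $y^2$, the elements $z_1, z_2$ are not symmetric under swapping $x_1 \leftrightarrow x_2$ in a way that respects the relation degrees, so one must carefully track which Leibniz expansion (of $[x_1^m, -]$ versus $[x_2^n, -]$) hits which generator, and verify that no \emph{extra} relations beyond those producing the $f$-torsion are imposed — i.e. that the free ranks in the table ($\mathbb{Z}^3$ interior, $\mathbb{Z}^2 \oplus \mathbb{Z}_m$ on the edges, etc.) are not further cut down. Checking this non-degeneracy — that the computed relations are exactly the ones claimed and not more — is where most of the care is needed, and it is here that cross-referencing the MAGMA data is most valuable as a consistency check.
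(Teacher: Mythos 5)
Your proposal follows essentially the same route as the paper: the torsion is extracted from the Leibniz expansion of $[x_1^m,x_2]=0$ carried to second order (via $yx_1^i = x_1^i y - i x_1^{i-1}z_1$), i.e.\ $0 = m x_1^{m-1}y + \tfrac{m(m-1)}{2}x_1^{m-2}z_1$, followed by right-multiplication by $y$, $x_1$, and $x_2$ to isolate the relations $m x_1^{m-1}y^2=0$, $m x_1^{m-1}z_1=0$, and $\tfrac{m(m-1)}{2}x_1^{m-1}z_1=0$, yielding orders $m$ and $\gcd\bigl(m,\tbinom{m}{2}\bigr)=f(m)$, with the Bézout argument at the corner bidegrees --- exactly the paper's computation. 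One small correction: $[x_1^m,y]$ expands to $m x_1^{m-1}z_1$, not to a $y^2$-term, so the relation $m x_1^{m-1}y^2=0$ must come from your second alternative (right-multiplying the corrected degree-$(m,1)$ relation by $y$, which kills the $z_1$-term since $z_1y=0$).
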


First, we show that 

\begin{proposition}
\label{M_3 Basis}
$M_3$ is generated by $u [x_1, y] v$, $u [x_2, y] v$, and $uyvyw$, for $u, v, w \in A$.
\end{proposition}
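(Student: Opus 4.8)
The plan is to mimic the strategy used for $M_2$ in Proposition \ref{Basis of N_2(A_2)}: start from the definition $M_3 = A_2 L_3(A_2) A_2 = A_2 [[A_2,A_2],A_2] A_2$, and reduce a general spanning element $u[[v,w],z]t$ (with $u,v,w,z,t$ monomials in $\mathcal{B}(A_2)$) to the claimed normal forms by repeated application of the Leibniz rule (Lemma \ref{Leibniz Rule}) together with the identities $y=[x_1,x_2]$, $z_1=[x_1,y]$, $z_2=[x_2,y]$. First I would apply the Leibniz rule to $[v,w]$ to write it as $\sum_i v_i y w_i$, so that every spanning element of $L_3$ becomes a sum of terms $[av b, z]$ where the inner word contains at least one explicit $y=[x_1,x_2]$. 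Applying the Leibniz rule once more to bring the outer bracket with $z$ inside, I expand $[avb,z]$ as a sum of terms in which the bracket lands either on a letter of $a$, a letter of $b$, or on the $y$ itself. When it lands on a letter $x_s$ of $a$ or $b$, we produce a new factor $[x_s,x_t]=\pm y$ (or $0$), so the monomial now carries \emph{two} explicit $y$'s — giving a term of the shape $u\, y\, v\, y\, w$. When it lands on the $y$, we produce $[y,x_s] = -z_s$, giving a term of the shape $u[x_1,y]v$ or $u[x_2,y]v$. Since $z$ itself is a monomial, iterating the Leibniz rule over its letters only multiplies out more terms of these same two shapes. Hence $M_3$ is spanned, as a two-sided ideal (equivalently, as a $\mathbb{Z}$-module of the stated $u\cdot\!-\!\cdot v$, $u\cdot\!-\!\cdot v\,y\,w$ type expressions), by the three families claimed.

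The one subtlety I would be careful about is bookkeeping of where the outer differentiations can act and making sure no genuinely new type of term (e.g.\ a bracket of the form $[x_s, z_t]$ with the differentiation hitting a previously-created $y$ that has already become part of a $z_t$, or a triple-nested commutator landing with no $y$ at all) escapes the classification. This is handled by observing that $L_3 = [[A_2,A_2],A_2]$ is spanned by left-normed brackets $[[v,w],z]$ on monomials, so only two rounds of the Leibniz rule are ever needed: the first produces one $y$, the second either produces a second $y$ or converts the first $y$ into a $z_1$ or $z_2$. No third round is required because after the second bracket is fully distributed over the letters of $z$, what remains inside each resulting monomial is an ordinary product of letters and of the symbols $y, z_1, z_2$, with no unresolved commutators. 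I expect this bookkeeping — precisely tracking the finitely many ways the second application of Leibniz distributes and verifying each yields one of the three prescribed shapes — to be the main (though entirely routine) obstacle; the algebra itself is a short computation.

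I would then remark, as in the $M_2$ case, that linear independence / minimality of this spanning set is not asserted here (the proposition only claims a generating family), so no further work is needed beyond the spanning argument; refining this to an actual $\mathbb{Z}$-basis of $N_3(A_2)$, and then of $N_3(A)$, is deferred to the subsequent lemmas where the relations from Theorem \ref{BEJ+ A_2/M_3} (namely $[x_i,y]$-type relations, $y^2=0$ in low degree, etc.) are used to eliminate redundancy and extract the torsion.
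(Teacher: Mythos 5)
Your proof is correct, and it reaches the same normal forms by a slightly different route than the paper. The paper writes a spanning element of $L_3$ as $[a,[b,c]]$, first reduces the \emph{outer} argument $a$ to a single generator $g$ by Leibniz, and then must invoke the Jacobi identity $[g,[b,c]]=[b,[g,c]]-[c,[g,b]]$ to push that generator into the inner slot before a final round of Leibniz expansions sorts the terms into the shapes $u[g,y]v$ and $uyvyw$. You instead resolve the \emph{inner} bracket first, writing $[v,w]=\sum_i v_i\,y\,w_i$ exactly as in the $M_2$ case, and then distribute the outer bracket with $z$ over the factors of $v_i\,y\,w_i$ and over the letters of $z$; each resulting term either creates a second $y$ (giving $uyvyw$) or converts the existing $y$ into $[y,x_s]=-z_s$ (giving $u[x_s,y]v$). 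This avoids the Jacobi identity altogether and makes the case analysis you worry about in your second paragraph genuinely exhaustive: since there are only two nested commutators, after the two stages of expansion every monomial contains either exactly one $z_s$ and no unresolved bracket, or exactly two $y$'s, so no third shape can arise. The trade-off is essentially aesthetic: the paper's Jacobi step produces a canonical nested form $[g,[g',d]]$ with both bracketing slots reduced to generators before the final expansion, whereas your argument is more direct but requires the bookkeeping over all factor positions that you correctly identify as the only delicate point. Your closing remark is also accurate: the proposition asserts only a generating family, so the spanning argument suffices and independence is deferred.
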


\begin{proof}
We first show that $M_3$ is generated by $u [g, y] v$ and $uyvyw$, for $u, v, w \in A$ and $g \in \{x_1, x_2\}$.
By definition, $M_3 = A [A,[A,A]] A$, so any of its elements may be written as $u [a,[b,c]]v$ for some $u,a,b,c,v$ in $A_2$.

We will concentrate on showing that $[a,[b,c]]$ can be written as a sum of $u[g, [b,c]]u'$.
Consider $a = a_1 \cdots a_k$, where each of $a_i \in \{x_1, x_2\}$.  We are done if we use the Leibniz Rule:
$$[a,[b,c]] = [a_1 \cdots a_k, [b,c]] = \sum_{i = 1}^k a_1 \cdots a_{i-1} [a_i, [b,c]] a_{i+1} \cdots a_k.$$

Next, we will show that $[g,[b,c]]$ can be written as a sum of $u[g,[g',d]]v$, with $g,g' \in \{x_1, x_2\}$ and $u,d,v \in A$.  We apply the Jacobi identity to get $[g,[b,c]] = [b,[g,c]] - [c, [g,b]].$
Looking at the first term $[b,[g,c]]$, we can apply the Leibniz rule as before to show that it can be written as a sum of $u[g,[g',d]]v$.  Since the second term is the same up to order as the first term, we are done.

Finally, we consider terms of the form $[g,[g',d]]$, showing that they can be written as a sum of the desired basis terms of $u[g,y]v$ and $uyvyw$.  
Let $d = d_1 \cdots d_j$, with each $d_i \in \{x_1, x_2\}$.  We apply the Leibniz rule once again, this time to $d$.  Thus,
$$[g,[g',d]] = \sum_{i = 1}^j [g,u_i [g', d_i] v_i] = \sum_{i=1}^j (u_i [g,[g',d_i]]v_i + u_i[g',d_i][g,v_i] +  [g,u_i][g',d_i]v_i)$$
for some $u_i, v_i \in A$.  The term $u_i [g,[g',d_i]]v_i$ is of the form of $u [g,y] v$ already, as $[g',d_i] = y \text{ or } 0$.
To show that $[g,v_i]$ (and simultaneously $[g,u_i]$) is in the form of $uyw$ (or is equal to $0$) with $u,w \in A$, we apply the Leibniz rule to $v_i = v_{i,1} \cdots v_{i ,\ell}$ with $v_{i,j} \in \{x_1, x_2\}$.
$$u_i y [g,v_i] = u_i y \sum_{j = 1}^{\ell}  v_{i,1} \cdots v_{i,j-1} [g, v_{i,j}] v_{i,j+1} \cdots v_{i,\ell} = \mathop{\sum_{j=1}^{\ell}}_{v_{i,j} \ne g} u_i y w_j y v_j$$
with $w_j, v_j \in A$, which completes the proof.  $\square$
\end{proof}

Then, we recall a theorem by \cite{EKM} that is the $M_4$ analogue of Theorem \ref{BEJ+ A_2/M_3}.

\begin{theorem}
\label{EKM A_2/M_4}
A presentation of $A_2 / M_4$ is given by the generators $x_1, x_2$  the following relations:
$$[x_1, z_2] = [x_1, z_1] = [x_2, z_1] = [x_2, z_2] = 0, \ \ y z_1 = y z_2 = y^3 = 0,\ \  z_1^2 = z_1 z_2 = z_2^2 = 0.$$
\end{theorem}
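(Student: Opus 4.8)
The plan is a presentation argument in three moves: build a surjection from the candidate algebra $B$ onto $A_2/M_4$, extract a spanning set of $B$, and show that its image is linearly independent in $A_2/M_4$.

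Let $B$ be the ring presented by the generators $x_1,x_2$ together with the displayed relations, writing $y:=[x_1,x_2]$, $z_1:=[x_1,y]$, $z_2:=[x_2,y]$ for the corresponding elements of $B$. First I would check that all of these relations hold in $A_2/M_4$. The four commutator relations are immediate: $z_j\in L_3(A_2)$, hence $[x_i,z_j]\in L_4(A_2)\subseteq M_4(A_2)$. For the monomial relations I would invoke the inclusion $M_aM_b\subseteq M_{a+b-1}$ valid whenever $a$ or $b$ is odd (Cor.\ 1.4 of \cite{BaJ}, already used above) together with $y^2\in M_3(A_2)$, i.e.\ the relation $y^2=0$ in $A_2/M_3$ from Theorem \ref{BEJ+ A_2/M_3}; these give $yz_j\in M_2M_3\subseteq M_4$, $z_iz_j\in M_3M_3\subseteq M_5\subseteq M_4$, and $y^3=y\cdot y^2\in M_2M_3\subseteq M_4$. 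So there is a surjective ring homomorphism $\phi\colon B\twoheadrightarrow A_2/M_4$.

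Next I would find a spanning set for $B$. The relation $x_2x_1=x_1x_2-y$ sorts the $x$'s; the relations asserting that $z_1,z_2$ are central, together with $yz_1=yz_2=z_1^2=z_1z_2=z_2^2=y^3=0$, let one gather every occurrence of $y,z_1,z_2$ and annihilate all of their pairwise products except $y^2$. A double induction (on word length, and for fixed length on the number of out-of-order adjacent pairs) then shows that $B$ is spanned over the ground ring by
$$\{\, x_1^a x_2^b\, w \ :\ a,b\ge 0,\ \ w\in\{1,\;y,\;y^2,\;z_1,\;z_2\}\,\}.$$
Under $\phi$ the families $x_1^a x_2^b$, $x_1^a x_2^b y$, and $x_1^a x_2^b\{y^2,z_1,z_2\}$ fall respectively into the three layers of the filtration $M_3/M_4\subseteq M_2/M_4\subseteq A_2/M_4$, whose successive quotients are $(A_2)_{ab}$, $N_2(A_2)$ and $N_3(A_2)$. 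Thus $\phi$ is an isomorphism provided each family is independent in its layer: $\{x_1^a x_2^b\}$ is a basis of the polynomial ring $(A_2)_{ab}$; $\{x_1^a x_2^b y\}$ is a basis of $N_2(A_2)$ by Proposition \ref{Basis of N_2(A_2)}; and $\{x_1^a x_2^b y^2,\ x_1^a x_2^b z_1,\ x_1^a x_2^b z_2\}$ must be shown to be a basis of $N_3(A_2)=M_3(A_2)/M_4(A_2)$.

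This last statement is the heart of the matter, and the step I expect to be the real obstacle. Spanning follows by running the generating set $u[x_1,y]v,\ u[x_2,y]v,\ uyvyw$ of $M_3$ (Proposition \ref{M_3 Basis}) through the same ``collect and cancel'' reduction modulo $M_4$. For linear independence I would either straighten arbitrary integral combinations in $A_2$ directly to a normal form and observe that none lies in $M_4(A_2)$, or import the bigraded Hilbert series of $N_3$ of the free algebra from the Feigin--Shoikhet circle of results and match it bidegree-by-bidegree against the proposed basis (which, one checks, contributes $3$ elements in bidegree $(p,q)$ for $p,q\ge 2$, one on the two edges, and none in bidegree $(1,1)$ --- precisely the interior pattern of the $N_3$ table above). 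Since we work over $\mathbb{Z}$, the cleanest route is to exhibit the explicit integral basis rather than argue by ranks, so as to preclude any hidden torsion. Everything except this lower bound on $N_3(A_2)$ is routine manipulation with the Leibniz and Jacobi identities (Lemma \ref{Leibniz Rule}); the single genuine difficulty is proving that $N_3(A_2)$ is no smaller than predicted.
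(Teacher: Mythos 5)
First, note that the paper offers no proof of this statement at all: Theorem \ref{EKM A_2/M_4} is imported from \cite{EKM} as a recalled result, so your argument is not competing with an in-paper proof but supplying one. Your overall scheme --- a surjection $\phi\colon B\twoheadrightarrow A_2/M_4$ from the presented algebra $B$, a rewriting argument showing $B$ is spanned by $x_1^ax_2^bw$ with $w\in\{1,y,y^2,z_1,z_2\}$, and independence of the images checked layer by layer along the filtration $M_3/M_4\subseteq M_2/M_4\subseteq A_2/M_4$ --- is the standard and correct one, and the verifications you do carry out (the relations hold in $A_2/M_4$ via $M_aM_b\subseteq M_{a+b-1}$ for $a$ or $b$ odd, and the reduction to normal form using centrality of $z_1,z_2$ and the vanishing of all products among $y,z_1,z_2$ other than $y^2$) are sound. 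One small point you should make explicit in the spanning step: the listed relations give only $yz_i=0$, so you also need $z_iy=0$; this does follow, since $[z_i,y]=[z_i,[x_1,x_2]]$ expands by the Leibniz rule into terms involving $[x_j,z_i]=0$, but it is a derived relation, not a listed one.

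The genuine gap is the one you flag yourself and then leave open: the $\mathbb{Z}$-linear independence of $\{x_1^ax_2^by^2,\ x_1^ax_2^bz_1,\ x_1^ax_2^bz_2\}$ in $N_3(A_2)=M_3(A_2)/M_4(A_2)$. Everything else in your argument is routine rewriting; this lower bound is precisely the nontrivial content of the theorem, and neither of your two suggested routes is actually carried out. The Hilbert-series route is moreover insufficient as stated: matching bigraded ranks against the characteristic-zero computations in the Feigin--Shoikhet circle of results would only control $N_3(A_2)\otimes\mathbb{Q}$, whereas the presentation is asserted over $\mathbb{Z}$, so you must additionally rule out torsion in $N_3(A_2)$ for the free algebra --- you acknowledge this but do not do it. As it stands, the proposal establishes that the displayed relations hold in $A_2/M_4$ and that they suffice to reduce every element of $B$ to the claimed normal form, i.e. that $A_2/M_4$ is a quotient of an algebra that is no bigger than expected; the reverse inequality, which is what makes this a presentation theorem rather than an upper bound, is still owed.
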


Armed with Lemma \ref{M_3 Basis} and Theorem \ref{EKM A_2/M_4}, we can find a basis of $M_3 / M_4 = N_3$. 

\begin{proof}
Our aim is to rewrite the terms $E$ and $F$ in a normal form using rewriting rules from $M_4$'s basis, where $E := u [x, y] v$ and $F:= uyvyw$ for $u,v,w \in A$ and $x \in \{x_1, x_2\}$.

Using methods similar to those in Theorem \ref{N_2 Basis over Z}, we find that $x_1$ and $x_2$ in monomials like $E$ commute, and so $E = x_1^i x_2^j z_1$ or $x_1^i x_2^j z_2$.

Next, we will rewrite $F$.  We first note that if there is more than one $y$ present in any monomial, then all the $y's$ commute with everything within that term, so $F$ may be rewritten as $uvw y^2$.  Like previously, if $F = uvw y^2 \ne x_1 ^i x_2 ^j y^2$, then we can also commute each $x_1$ and $x_2$ in these terms.  $\square$

\end{proof}

We will use a fact in the proof of Lemma \ref{N_3 Torsion}:
\begin{proposition}
\label{xiy comm}
Let $i \ge 1$.  Then, $yx_1^i = x_1^i y - ix_1^{i-1} z_1$ and $yx_2^i = x_2^i y - i x_2^{i-1} z_2$.
\end{proposition}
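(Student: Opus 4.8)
The plan is to prove both identities by induction on $i$, using the Leibniz Rule (Lemma \ref{Leibniz Rule}) together with the definitions $z_1 = [x_1, y]$ and $z_2 = [x_2, y]$. I will prove the first identity $yx_1^i = x_1^i y - i x_1^{i-1} z_1$ in detail; the second follows by the symmetric argument with $x_1, z_1$ replaced by $x_2, z_2$. Note that all computations take place in $N_3(A) = M_3/M_4$, so I may freely use the relations of $A_2/M_4$ from Theorem \ref{EKM A_2/M_4}; in particular $[x_1, z_1] = 0$, so $z_1$ commutes with powers of $x_1$, which is what makes the closed form clean.

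First I would handle the base case $i = 1$: by definition $[y, x_1] = -[x_1, y] = -z_1$, hence $yx_1 = x_1 y + [y,x_1] = x_1 y - z_1$, which is the claimed formula for $i=1$. For the inductive step, assume $yx_1^i = x_1^i y - i x_1^{i-1} z_1$. Then
$$yx_1^{i+1} = (yx_1^i)x_1 = (x_1^i y - i x_1^{i-1} z_1)x_1 = x_1^i (y x_1) - i x_1^{i-1} z_1 x_1.$$
Using the base case $yx_1 = x_1 y - z_1$ on the first summand and the relation $[x_1, z_1] = 0$ (so $z_1 x_1 = x_1 z_1$) on the second, this becomes
$$x_1^i(x_1 y - z_1) - i x_1^{i-1} x_1 z_1 = x_1^{i+1} y - x_1^i z_1 - i x_1^i z_1 = x_1^{i+1} y - (i+1) x_1^i z_1,$$
completing the induction. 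The identity for $x_2$ is obtained verbatim after swapping indices, using $[x_2, z_2] = 0$ from Theorem \ref{EKM A_2/M_4}.

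There is essentially no serious obstacle here: the only subtlety is bookkeeping with the quotient relations, namely making sure $z_1$ genuinely commutes with $x_1$ in $N_3$, which is exactly relation $[x_1, z_1]=0$ of $A_2/M_4$, and that no higher commutator terms survive. One could alternatively avoid invoking Theorem \ref{EKM A_2/M_4} and instead expand $[y, x_1^i]$ directly via the Leibniz Rule as $\sum_{s=0}^{i-1} x_1^s [y, x_1] x_1^{i-1-s} = -\sum_{s=0}^{i-1} x_1^s z_1 x_1^{i-1-s}$, then collapse the sum using $[x_1, z_1] = 0$; this gives the same answer in one line and makes the role of the relation transparent. Either way the argument is short, and I would present the induction version for self-containedness.
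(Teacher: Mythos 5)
Your argument is correct, and in fact the paper never proves this proposition at all: the \texttt{proof} environment that follows it in the source is actually the proof of Lemma \ref{N_3 Torsion} (it begins ``To find the torsion, we identify all relations for bidegree $(m+1,2)$\dots''), so there is nothing to compare your route against --- your write-up supplies a missing argument. Both of your versions work: the induction is sound, and the one-line Leibniz expansion $[y,x_1^i]=\sum_{s=0}^{i-1}x_1^s[y,x_1]x_1^{i-1-s}=-\sum_{s}x_1^s z_1 x_1^{i-1-s}$, collapsed to $-i\,x_1^{i-1}z_1$ via $[x_1,z_1]=0$, is arguably the cleaner presentation since it isolates exactly where the relation from Theorem \ref{EKM A_2/M_4} enters. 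One small imprecision: the identity is not a computation ``in $N_3(A)=M_3/M_4$'' --- the element $yx_1^i$ lies in $M_2$, not $M_3$ --- but rather an identity in $A_2/M_4$ (equivalently, a congruence modulo $M_4$, which is legitimate because $M_4$ is a two-sided ideal so right multiplication by $x_1$ descends). Since what you actually use is precisely the presentation of $A_2/M_4$, this does not affect the validity of the proof.
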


\begin{proof}

To find the torsion, we identify all relations for bidegree $(m+1,2)$, and work our way up from there.

We start off with some algebraic manipulation to get that
\begin{equation}
0 = mx_1^{m-1}y + \frac{m(m-1)}{2} x_1^{m-2} z_1.
\end{equation}
Let $E$ be the right hand of equation $(1)$.

First, we would like to prove $m(m-1) x_1^{m-1} y^2 = 0.$  Starting with $0 = [E, x_2]$, we get that 
$$ 0 = m(m-1)x_1^{m-2}y^2. $$
Multiplying on the right by $x_1$ yields our first relation.

Second, we would like to show that $mx_1^{m-1}y^2 = 0.$  Right multiplication on equation (1) by $y$ yields the relation.

Third, we will show that $m x_1^{m-1} x_2 z_1 = 0$.  With right multiplication by $x_2$ on the equation $mx_1^{m-1}z_1 = 0$, commutativity of $z_1$ with everything yields our desired relation.

Finally, we will show that $\dfrac{m(m-1)}{2} x_1^{m-1} x_2 z_1 = 0$.  If we right multiply equation (1) by $x_1$, we get the following:
$$0 = \frac{m(m-1)}{2} x_1^{m-1} z_1. $$

To finish, we right multiply by $x_2$.  

Notice that these monomials end with either $y^2 \text{ or } z_1$, which both commute with $x_2$; thus, if we right multiply by $x_2^{j}$ for $0 \le j \le n-3$ we get our desired results.

So, we have found two terms that generate groups: $x_1^{m-1} x_2^{j}y^2$, and $x_1^{m-1} x_2^{j+1} z_1$, both with bidegrees $(m+1, j+2)$.  The first term generates a torsion part of order $gcd(m, m(m-1)) = m$, while the second generates a torsion part of order $(m, \frac{m(m-1)}{2})$.  Thus, the torsion in the bidegree is $\mathbb{Z}_m \oplus \mathbb{Z}_{(m, \frac{m(m-1)}{2})}$.  For odd $m$, this is equal to $\mathbb{Z}_m \oplus \mathbb{Z}_m$, and $\mathbb{Z}_m \oplus \mathbb{Z}_{\frac{m}{2}}$ for even $m$, so our prior Definition \ref{gcdDef} of $f(k)$ holds.

Since $x_1$ is symmetric with respect to $x_2$, we obtain the same results for the bidegrees $(i+2,n+1)$ for $0 \le i \le m-3$; i.e., the torsion is $\mathbb{Z}_n \oplus \mathbb{Z}_{(n, \frac{n(n-1)}{2})}$.
$\square$
\end{proof}

\section{Conclusion}
	In this project, we programmed {\it MAGMA} \cite{BCP} to compute data about the dimensions and ranks of these lower central series ideal quotients for various algebras.  Using this data, we formulated and proved conjectures concerning these quotients $N_i(A)$.  Just like how knowing sufficiently the divisors of an integer, we have proven a partial result about the substructure of an infinite and complex family of algebras in {\bf Section \ref{F_p section}}.  And, in {\bf Section \ref{Z section}} we characterized the bigraded structure of $N_2(A)$ and $N_3(A)$ for algebras with two generators over $\mathbb{Z}$.  In addition, we have gathered over $250$ bigraded tables and nearly $100$ totally graded tables, which can aid further exploration of these algebraic structures and applications.  
	
\section{Future Work}
There is still much that may be explored in this topic.  Over $\mathbb{Z}$, we could describe the bigraded structure of $N_4 (A_2)$ by utilizing a recently published paper by \cite{CK} that outlined a basis of $A/M_5$.  In addition, we could try to produce code and explore individual grading of more than just $2$ variables.  In general, we would like to be able to describe $N_i(A)$, where $A \cong \mathbb{Z} \langle x_1, \cdots, x_k \rangle / (x_1^{m_1}, \cdots, x_k^{m_k})$.
Potential further work is to perform individual grading on the $B_i(A)$ defined in the introduction.

There are several conjectures we were not able to prove by the time of submission:
\begin{enumerate}
\item By comparing Tables $5$ and $6$ in Section $5$, where the only difference is that they were calculated over $\mathbb{Z}$ versus $\mathbb{F}_p$, we seem to be able to recover Table $6$'s data from the others'.  We mod out the free parts by $p\mathbb{Z}$, leaving a copy of $\mathbb{Z}_p$.  If there was torsion $\mathbb{Z}_m$ in the Table over $\mathbb{Z}$, then there would be a copy of $\mathbb{Z}_{(p,m)}$ over $\mathbb{F}_p$.  All our tables support this.
\item We have a conjecture about generators for the free part of $N_4(A)$, that they are $x_1^i, x_2^j v$, where $v \in A$ has one of the following bidegrees: $(1,3), (2,2), (3,1), (2,3), (3,2), (3,3)$.  
\item Though we have a complete description of $N_2 (A)$, with $A = A_2 / (x_1^m, x_2^n)$, we have found proofs of the same fashion that allow us to conjecture the number of generating terms there are in the basis $N_2(A_k)$:
$$\sum_{i = 1}^{\lfloor \frac{k}{2} \rfloor} \dbinom{k}{2i}.$$
By using a complex filtration, a closed form of this expression can be found:
$$Re((1+i)^k).$$

\end{enumerate}


\section{Methods and Tables}
In order to calculate free and torsion subgroups of $N_i(A)$, we use preexisting code that calculated $N_i(A)$ over $\mathbb{Q}$ for one relation.  This required us to modify the code to allow for multiple relations, calculations over $\mathbb{Z}$ and $\mathbb{F}_p$, and most importantly: to calculate bigraded data (that is,  degrees of individual generators in $A_2$).  The code computes each $N_i$ after computing the corresponding $L_i$ and $M_i$, then moves on to the subsequent $N_{i+1}$.

However, computers can only handle linear systems of size a few thousands.  The dimension of $A_2$ in degree $n$ is $2^n$, so to compute data with degree $n$ about $N_i(A_2)$, we need to solve linear systems of size $2^n$.  Realistically, our last calculable value $n = 12$, as $2^{12} = 4096$ bigraded entries.  So, we work with many data tables of $N_i(A)$ for small $i < 12$, automating the collection process by writing {\it Java} and {\it BASH} scripts to convert data to {\it LaTeX} tables.  Below, we present a small selection of our data collection, which contains over $350$ tables.

The rows represent $m$ and the columns represent $n$, where our relations are $x_1^m  = x_2^n = 0$.  A cell with a small {\tiny $0$} represents no free component there, while a blank cell indicates that the computer was not able to calculate data there.  Each non-trivial cell is of the form $R, (T)$, where $R$ represents the rank of the free component $(\mathbb{Z}^R)$, while $(T)$, in parentheses, represents the torsion structure.  For example, in $(2,5)$ of Table \ref{refExmpTable}, $T = (2 \cdot 4)$ represents $\mathbb{Z}_2 \oplus \mathbb{Z}_{4}$.  Absence of parentheses indicates an absence of torsion.

\begin{table} [H] 
\footnotesize
\caption{$N_2:\ \mathbb{Z} \langle x_1,x_2\rangle /(x_1^3,x_2^7)$, Time: 906.16 sec, Memory: 780.78MB}
\begin{tabular} {|l||l|l|l|l|l|l|l|l|l|l|l|l|}
\hline
$(m,n)$ & $0$ & $1$ & $2$ & $3$ & $4$ & $5$ & $6$ & $7$ & $8$ & $9$ & $10$ & $11$ \\ \hline \hline
$0$ &{\tiny $0$ } & {\tiny $0$ } & {\tiny $0$ } & {\tiny $0$ } & {\tiny $0$ } & {\tiny $0$ } & {\tiny $0$ } & {\tiny $0$ } & {\tiny $0$ } & {\tiny $0$ } & {\tiny $0$ } & {\tiny $0$ } \\ \hline
$1$ &{\tiny $0$ } & $1$ & $1$ & $1$ & $1$ & $1$ & $1$ & {\tiny $0$ }{  $(7)$} & {\tiny $0$ } & {\tiny $0$ } & {\tiny $0$ } &  \\ \hline
$2$ &{\tiny $0$ } & $1$ & $1$ & $1$ & $1$ & $1$ & $1$ & {\tiny $0$ }{  $(7)$} & {\tiny $0$ } & {\tiny $0$ } &  &  \\ \hline
$3$ &{\tiny $0$ } & {\tiny $0$ }{  $(3)$} & {\tiny $0$ }{  $(3)$} & {\tiny $0$ }{  $(3)$} & {\tiny $0$ }{  $(3)$} & {\tiny $0$ }{  $(3)$} & {\tiny $0$ }{  $(3)$} & {\tiny $0$ } & {\tiny $0$ } &  &  &  \\ \hline
$4$ &{\tiny $0$ } & {\tiny $0$ } & {\tiny $0$ } & {\tiny $0$ } & {\tiny $0$ } & {\tiny $0$ } & {\tiny $0$ } & {\tiny $0$ } &  &  &  &  \\ \hline
$5$ &{\tiny $0$ } & {\tiny $0$ } & {\tiny $0$ } & {\tiny $0$ } & {\tiny $0$ } & {\tiny $0$ } & {\tiny $0$ } &  &  &  &  &  \\ \hline
$6$ &{\tiny $0$ } & {\tiny $0$ } & {\tiny $0$ } & {\tiny $0$ } & {\tiny $0$ } & {\tiny $0$ } &  &  &  &  &  &  \\ \hline
$7$ &{\tiny $0$ } & {\tiny $0$ } & {\tiny $0$ } & {\tiny $0$ } & {\tiny $0$ } &  &  &  &  &  &  &  \\ \hline
$8$ &{\tiny $0$ } & {\tiny $0$ } & {\tiny $0$ } & {\tiny $0$ } &  &  &  &  &  &  &  &  \\ \hline
$9$ &{\tiny $0$ } & {\tiny $0$ } & {\tiny $0$ } &  &  &  &  &  &  &  &  &  \\ \hline
$10$ &{\tiny $0$ } & {\tiny $0$ } &  &  &  &  &  &  &  &  &  &  \\ \hline
$11$ &{\tiny $0$ } &  &  &  &  &  &  &  &  &  &  &  \\ \hline
\end{tabular}
\end{table}

\begin{table} [H] 
\label{refExmpTable}
\footnotesize
\caption{$N_2:\ \mathbb{Z} \langle x_1,x_2 \rangle /(x_1^4,x_2^6)$, Time: 911.82 sec, Memory: 769.03MB}
\begin{tabular} {|l||l|l|l|l|l|l|l|l|l|l|l|l|}
\hline
$(m,n)$ & $0$ & $1$ & $2$ & $3$ & $4$ & $5$ & $6$ & $7$ & $8$ & $9$ & $10$ & $11$ \\ \hline \hline
$0$ &{\tiny $0$ } & {\tiny $0$ } & {\tiny $0$ } & {\tiny $0$ } & {\tiny $0$ } & {\tiny $0$ } & {\tiny $0$ } & {\tiny $0$ } & {\tiny $0$ } & {\tiny $0$ } & {\tiny $0$ } & {\tiny $0$ } \\ \hline
$1$ &{\tiny $0$ } & $1$ & $1$ & $1$ & $1$ & $1$ & {\tiny $0$ }{  $(2 \cdot 3)$} & {\tiny $0$ } & {\tiny $0$ } & {\tiny $0$ } & {\tiny $0$ } &  \\ \hline
$2$ &{\tiny $0$ } & $1$ & $1$ & $1$ & $1$ & $1$ & {\tiny $0$ }{  $(2 \cdot 3)$} & {\tiny $0$ } & {\tiny $0$ } & {\tiny $0$ } &  &  \\ \hline
$3$ &{\tiny $0$ } & $1$ & $1$ & $1$ & $1$ & $1$ & {\tiny $0$ }{  $(2 \cdot 3)$} & {\tiny $0$ } & {\tiny $0$ } &  &  &  \\ \hline
$4$ &{\tiny $0$ } & {\tiny $0$ }{  $(4)$} & {\tiny $0$ }{  $(4)$} & {\tiny $0$ }{  $(4)$} & {\tiny $0$ }{  $(4)$} & {\tiny $0$ }{  $(4)$} & {\tiny $0$ }{  $(2)$} & {\tiny $0$ } &  &  &  &  \\ \hline
$5$ &{\tiny $0$ } & {\tiny $0$ } & {\tiny $0$ } & {\tiny $0$ } & {\tiny $0$ } & {\tiny $0$ } & {\tiny $0$ } &  &  &  &  &  \\ \hline
$6$ &{\tiny $0$ } & {\tiny $0$ } & {\tiny $0$ } & {\tiny $0$ } & {\tiny $0$ } & {\tiny $0$ } &  &  &  &  &  &  \\ \hline
$7$ &{\tiny $0$ } & {\tiny $0$ } & {\tiny $0$ } & {\tiny $0$ } & {\tiny $0$ } &  &  &  &  &  &  &  \\ \hline
$8$ &{\tiny $0$ } & {\tiny $0$ } & {\tiny $0$ } & {\tiny $0$ } &  &  &  &  &  &  &  &  \\ \hline
$9$ &{\tiny $0$ } & {\tiny $0$ } & {\tiny $0$ } &  &  &  &  &  &  &  &  &  \\ \hline
$10$ &{\tiny $0$ } & {\tiny $0$ } &  &  &  &  &  &  &  &  &  &  \\ \hline
$11$ &{\tiny $0$ } &  &  &  &  &  &  &  &  &  &  &  \\ \hline
\end{tabular}
\end{table}

\begin{table} [H]

\footnotesize
\caption{$N_3:\ \mathbb{Z} \langle x,y \rangle /(x^3,y^4)$, Time: 912.87 sec, Memory: 789.53MB}
\begin{tabular} {|l||l|l|l|l|l|l|l|l|l|l|l|l|}
\hline
$(m,n)$ & $0$ & $1$ & $2$ & $3$ & $4$ & $5$ & $6$ & $7$ & $8$ & $9$ & $10$ & $11$ \\ \hline \hline
$0$ &{\tiny $0$ } & {\tiny $0$ } & {\tiny $0$ } & {\tiny $0$ } & {\tiny $0$ } & {\tiny $0$ } & {\tiny $0$ } & {\tiny $0$ } & {\tiny $0$ } & {\tiny $0$ } & {\tiny $0$ } & {\tiny $0$ } \\ \hline
$1$ &{\tiny $0$ } & {\tiny $0$ } & $1$ & $1$ & $1$ & {\tiny $0$ }{  $(2)$} & {\tiny $0$ } & {\tiny $0$ } & {\tiny $0$ } & {\tiny $0$ } & {\tiny $0$ } &  \\ \hline
$2$ &{\tiny $0$ } & $1$ & $3$ & $3$ & $2${  $(4)$} & {\tiny $0$ }{  $(2 \cdot 4)$} & {\tiny $0$ } & {\tiny $0$ } & {\tiny $0$ } & {\tiny $0$ } &  &  \\ \hline
$3$ &{\tiny $0$ } & $1$ & $2${  $(3)$} & $2${  $(3)$} & $1${  $(3 \cdot 4)$} & {\tiny $0$ }{  $(4)$} & {\tiny $0$ } & {\tiny $0$ } & {\tiny $0$ } &  &  &  \\ \hline
$4$ &{\tiny $0$ } & {\tiny $0$ }{  $(3)$} & {\tiny $0$ }{  $(3^{2})$} & {\tiny $0$ }{  $(3^{2})$} & {\tiny $0$ }{  $(3)$} & {\tiny $0$ } & {\tiny $0$ } & {\tiny $0$ } &  &  &  &  \\ \hline
$5$ &{\tiny $0$ } & {\tiny $0$ } & {\tiny $0$ } & {\tiny $0$ } & {\tiny $0$ } & {\tiny $0$ } & {\tiny $0$ } &  &  &  &  &  \\ \hline
$6$ &{\tiny $0$ } & {\tiny $0$ } & {\tiny $0$ } & {\tiny $0$ } & {\tiny $0$ } & {\tiny $0$ } &  &  &  &  &  &  \\ \hline
$7$ &{\tiny $0$ } & {\tiny $0$ } & {\tiny $0$ } & {\tiny $0$ } & {\tiny $0$ } &  &  &  &  &  &  &  \\ \hline
$8$ &{\tiny $0$ } & {\tiny $0$ } & {\tiny $0$ } & {\tiny $0$ } &  &  &  &  &  &  &  &  \\ \hline
$9$ &{\tiny $0$ } & {\tiny $0$ } & {\tiny $0$ } &  &  &  &  &  &  &  &  &  \\ \hline
$10$ &{\tiny $0$ } & {\tiny $0$ } &  &  &  &  &  &  &  &  &  &  \\ \hline
$11$ &{\tiny $0$ } &  &  &  &  &  &  &  &  &  &  &  \\ \hline
\end{tabular}
\end{table}

\begin{table} [H]
\label{ref2ExmpTable}
\footnotesize
\caption{$N_3:\ \mathbb{Z}_3 \langle x,y \rangle /(x^3,y^4)$, Time: 97654.05 sec, Memory: 2783.16MB}
\begin{tabular} {|l||l|l|l|l|l|l|l|l|l|l|l|l|}
\hline
$(m,n)$ & $0$ & $1$ & $2$ & $3$ & $4$ & $5$ & $6$ & $7$ & $8$ & $9$ & $10$ & $11$ \\ \hline \hline
$0$ &{\tiny $0$ } & {\tiny $0$ } & {\tiny $0$ } & {\tiny $0$ } & {\tiny $0$ } & {\tiny $0$ } & {\tiny $0$ } & {\tiny $0$ } & {\tiny $0$ } & {\tiny $0$ } & {\tiny $0$ } & {\tiny $0$ } \\ \hline
$1$ &{\tiny $0$ } & {\tiny $0$ } & $1$ & $1$ & $1$ & {\tiny $0$ } & {\tiny $0$ } & {\tiny $0$ } & {\tiny $0$ } & {\tiny $0$ } & {\tiny $0$ } &  \\ \hline
$2$ &{\tiny $0$ } & $1$ & $3$ & $3$ & $2$ & {\tiny $0$ } & {\tiny $0$ } & {\tiny $0$ } & {\tiny $0$ } & {\tiny $0$ } &  &  \\ \hline
$3$ &{\tiny $0$ } & $1$ & $3$ & $3$ & $2$ & {\tiny $0$ } & {\tiny $0$ } & {\tiny $0$ } & {\tiny $0$ } &  &  &  \\ \hline
$4$ &{\tiny $0$ } & $1$ & $2$ & $2$ & $1$ & {\tiny $0$ } & {\tiny $0$ } & {\tiny $0$ } &  &  &  &  \\ \hline
$5$ &{\tiny $0$ } & {\tiny $0$ } & {\tiny $0$ } & {\tiny $0$ } & {\tiny $0$ } & {\tiny $0$ } & {\tiny $0$ } &  &  &  &  &  \\ \hline
$6$ &{\tiny $0$ } & {\tiny $0$ } & {\tiny $0$ } & {\tiny $0$ } & {\tiny $0$ } & {\tiny $0$ } &  &  &  &  &  &  \\ \hline
$7$ &{\tiny $0$ } & {\tiny $0$ } & {\tiny $0$ } & {\tiny $0$ } & {\tiny $0$ } &  &  &  &  &  &  &  \\ \hline
$8$ &{\tiny $0$ } & {\tiny $0$ } & {\tiny $0$ } & {\tiny $0$ } &  &  &  &  &  &  &  &  \\ \hline
$9$ &{\tiny $0$ } & {\tiny $0$ } & {\tiny $0$ } &  &  &  &  &  &  &  &  &  \\ \hline
$10$ &{\tiny $0$ } & {\tiny $0$ } &  &  &  &  &  &  &  &  &  &  \\ \hline
$11$ &{\tiny $0$ } &  &  &  &  &  &  &  &  &  &  &  \\ \hline
\end{tabular}
\end{table}

\begin{table} [H]
\caption{$N_3:\ \mathbb{Z} \langle x,y \rangle /(x^7,y^7)$, Time: 879.42 sec, Memory: 754.81MB}
\begin{tabular} {|l||l|l|l|l|l|l|l|l|l|l|l|l|}
\hline
$(m,n)$ & $0$ & $1$ & $2$ & $3$ & $4$ & $5$ & $6$ & $7$ & $8$ & $9$ & $10$ & $11$ \\ \hline \hline
$0$ &{\tiny $0$ } & {\tiny $0$ } & {\tiny $0$ } & {\tiny $0$ } & {\tiny $0$ } & {\tiny $0$ } & {\tiny $0$ } & {\tiny $0$ } & {\tiny $0$ } & {\tiny $0$ } & {\tiny $0$ } & {\tiny $0$ } \\ \hline
$1$ &{\tiny $0$ } & {\tiny $0$ } & $1$ & $1$ & $1$ & $1$ & $1$ & $1$ & {\tiny $0$ }{  $(7)$} & {\tiny $0$ } & {\tiny $0$ } &  \\ \hline
$2$ &{\tiny $0$ } & $1$ & $3$ & $3$ & $3$ & $3$ & $3$ & $2${  $(7)$} & {\tiny $0$ }{  $(7^{2})$} & {\tiny $0$ } &  &  \\ \hline
$3$ &{\tiny $0$ } & $1$ & $3$ & $3$ & $3$ & $3$ & $3$ & $2${  $(7)$} & {\tiny $0$ }{  $(7^{2})$} &  &  &  \\ \hline
$4$ &{\tiny $0$ } & $1$ & $3$ & $3$ & $3$ & $3$ & $3$ & $2${  $(7)$} &  &  &  &  \\ \hline
$5$ &{\tiny $0$ } & $1$ & $3$ & $3$ & $3$ & $3$ & $3$ &  &  &  &  &  \\ \hline
$6$ &{\tiny $0$ } & $1$ & $3$ & $3$ & $3$ & $3$ &  &  &  &  &  &  \\ \hline
$7$ &{\tiny $0$ } & $1$ & $2${  $(7)$} & $2${  $(7)$} & $2${  $(7)$} &  &  &  &  &  &  &  \\ \hline
$8$ &{\tiny $0$ } & {\tiny $0$ }{  $(7)$} & {\tiny $0$ }{  $(7^{2})$} & {\tiny $0$ }{  $(7^{2})$} &  &  &  &  &  &  &  &  \\ \hline
$9$ &{\tiny $0$ } & {\tiny $0$ } & {\tiny $0$ } &  &  &  &  &  &  &  &  &  \\ \hline
$10$ &{\tiny $0$ } & {\tiny $0$ } &  &  &  &  &  &  &  &  &  &  \\ \hline
$11$ &{\tiny $0$ } &  &  &  &  &  &  &  &  &  &  &  \\ \hline
\end{tabular}
\end{table}

\begin{table} [H]
\caption{$N_3:\ \mathbb{Z}_7 \langle x,y \rangle /(x^7,y^7)$, Time: 15927.51 sec, Memory: 4333.34MB}
\begin{tabular} {|l||l|l|l|l|l|l|l|l|l|l|l|l|}
\hline
$(m,n)$ & $0$ & $1$ & $2$ & $3$ & $4$ & $5$ & $6$ & $7$ & $8$ & $9$ & $10$ & $11$ \\ \hline \hline
$(0,n)$ &{\tiny $0$} & {\tiny $0$} & {\tiny $0$} & {\tiny $0$} & {\tiny $0$} & {\tiny $0$} & {\tiny $0$} & {\tiny $0$} & {\tiny $0$} & {\tiny $0$} & {\tiny $0$} & {\tiny $0$} \\ \hline
$(1,n)$ &{\tiny $0$} & {\tiny $0$} & $1$ & $1$ & $1$ & $1$ & $1$ & $1$ & $1$ & {\tiny $0$} & {\tiny $0$} & $$ \\ \hline
$(2,n)$ &{\tiny $0$} & $1$ & $3$ & $3$ & $3$ & $3$ & $3$ & $3$ & $2$ & {\tiny $0$} & $$ & $$ \\ \hline
$(3,n)$ &{\tiny $0$} & $1$ & $3$ & $3$ & $3$ & $3$ & $3$ & $3$ & $2$ & $$ & $$ & $$ \\ \hline
$(4,n)$ &{\tiny $0$} & $1$ & $3$ & $3$ & $3$ & $3$ & $3$ & $3$ & $$ & $$ & $$ & $$ \\ \hline
$(5,n)$ &{\tiny $0$} & $1$ & $3$ & $3$ & $3$ & $3$ & $3$ & $$ & $$ & $$ & $$ & $$ \\ \hline
$(6,n)$ &{\tiny $0$} & $1$ & $3$ & $3$ & $3$ & $3$ & $$ & $$ & $$ & $$ & $$ & $$ \\ \hline
$(7,n)$ &{\tiny $0$} & $1$ & $3$ & $3$ & $3$ & $$ & $$ & $$ & $$ & $$ & $$ & $$ \\ \hline
$(8,n)$ &{\tiny $0$} & $1$ & $2$ & $2$ & $$ & $$ & $$ & $$ & $$ & $$ & $$ & $$ \\ \hline
$(9,n)$ &{\tiny $0$} & {\tiny $0$} & {\tiny $0$} & $$ & $$ & $$ & $$ & $$ & $$ & $$ & $$ & $$ \\ \hline
$(10,n)$ &{\tiny $0$} & {\tiny $0$} & $$ & $$ & $$ & $$ & $$ & $$ & $$ & $$ & $$ & $$ \\ \hline
$(11,n)$ &{\tiny $0$} & $$ & $$ & $$ & $$ & $$ & $$ & $$ & $$ & $$ & $$ & $$ \\ \hline
\end{tabular}
\end{table}

\begin{table} [H]
\caption{$N_3:\ \mathbb{Z} \langle x,y \rangle /(x^8,y^8)$, Time: 876.37 sec, Memory: 754.19MB}
\begin{tabular} {|l||l|l|l|l|l|l|l|l|l|l|l|l|}
\hline
$(m,n)$ & $0$ & $1$ & $2$ & $3$ & $4$ & $5$ & $6$ & $7$ & $8$ & $9$ & $10$ & $11$ \\ \hline \hline
$0$ &{\tiny $0$ } & {\tiny $0$ } & {\tiny $0$ } & {\tiny $0$ } & {\tiny $0$ } & {\tiny $0$ } & {\tiny $0$ } & {\tiny $0$ } & {\tiny $0$ } & {\tiny $0$ } & {\tiny $0$ } & {\tiny $0$ } \\ \hline
$1$ &{\tiny $0$ } & {\tiny $0$ } & $1$ & $1$ & $1$ & $1$ & $1$ & $1$ & $1$ & {\tiny $0$ }{  $(4)$} & {\tiny $0$ } &  \\ \hline
$2$ &{\tiny $0$ } & $1$ & $3$ & $3$ & $3$ & $3$ & $3$ & $3$ & $2${  $(8)$} & {\tiny $0$ }{  $(4 \cdot 8)$} &  &  \\ \hline
$3$ &{\tiny $0$ } & $1$ & $3$ & $3$ & $3$ & $3$ & $3$ & $3$ & $2${  $(8)$} &  &  &  \\ \hline
$4$ &{\tiny $0$ } & $1$ & $3$ & $3$ & $3$ & $3$ & $3$ & $3$ &  &  &  &  \\ \hline
$5$ &{\tiny $0$ } & $1$ & $3$ & $3$ & $3$ & $3$ & $3$ &  &  &  &  &  \\ \hline
$6$ &{\tiny $0$ } & $1$ & $3$ & $3$ & $3$ & $3$ &  &  &  &  &  &  \\ \hline
$7$ &{\tiny $0$ } & $1$ & $3$ & $3$ & $3$ &  &  &  &  &  &  &  \\ \hline
$8$ &{\tiny $0$ } & $1$ & $2${  $(8)$} & $2${  $(8)$} &  &  &  &  &  &  &  &  \\ \hline
$9$ &{\tiny $0$ } & {\tiny $0$ }{  $(4)$} & {\tiny $0$ }{  $(4 \cdot 8)$} &  &  &  &  &  &  &  &  &  \\ \hline
$10$ &{\tiny $0$ } & {\tiny $0$ } &  &  &  &  &  &  &  &  &  &  \\ \hline
$11$ &{\tiny $0$ } &  &  &  &  &  &  &  &  &  &  &  \\ \hline
\end{tabular}
\end{table}

\begin{table} [H]
\caption{$N_3:\ \mathbb{Z} \langle x,y \rangle /(x^8,y^9)$, Time: 877.02 sec, Memory: 753.88MB}
\begin{tabular} {|l||l|l|l|l|l|l|l|l|l|l|l|l|}
\hline
$(m,n)$ & $0$ & $1$ & $2$ & $3$ & $4$ & $5$ & $6$ & $7$ & $8$ & $9$ & $10$ & $11$ \\ \hline \hline
$0$ &{\tiny $0$ } & {\tiny $0$ } & {\tiny $0$ } & {\tiny $0$ } & {\tiny $0$ } & {\tiny $0$ } & {\tiny $0$ } & {\tiny $0$ } & {\tiny $0$ } & {\tiny $0$ } & {\tiny $0$ } & {\tiny $0$ } \\ \hline
$1$ &{\tiny $0$ } & {\tiny $0$ } & $1$ & $1$ & $1$ & $1$ & $1$ & $1$ & $1$ & $1$ & {\tiny $0$ }{  $(9)$} &  \\ \hline
$2$ &{\tiny $0$ } & $1$ & $3$ & $3$ & $3$ & $3$ & $3$ & $3$ & $3$ & $2${  $(9)$} &  &  \\ \hline
$3$ &{\tiny $0$ } & $1$ & $3$ & $3$ & $3$ & $3$ & $3$ & $3$ & $3$ &  &  &  \\ \hline
$4$ &{\tiny $0$ } & $1$ & $3$ & $3$ & $3$ & $3$ & $3$ & $3$ &  &  &  &  \\ \hline
$5$ &{\tiny $0$ } & $1$ & $3$ & $3$ & $3$ & $3$ & $3$ &  &  &  &  &  \\ \hline
$6$ &{\tiny $0$ } & $1$ & $3$ & $3$ & $3$ & $3$ &  &  &  &  &  &  \\ \hline
$7$ &{\tiny $0$ } & $1$ & $3$ & $3$ & $3$ &  &  &  &  &  &  &  \\ \hline
$8$ &{\tiny $0$ } & $1$ & $2${  $(8)$} & $2${  $(8)$} &  &  &  &  &  &  &  &  \\ \hline
$9$ &{\tiny $0$ } & {\tiny $0$ }{  $(4)$} & {\tiny $0$ }{  $(4 \cdot 8)$} &  &  &  &  &  &  &  &  &  \\ \hline
$10$ &{\tiny $0$ } & {\tiny $0$ } &  &  &  &  &  &  &  &  &  &  \\ \hline
$11$ &{\tiny $0$ } &  &  &  &  &  &  &  &  &  &  &  \\ \hline
\end{tabular}
\end{table}

\begin{table} [H]
\caption{$N_4:\ \mathbb{Z} \langle x,y \rangle /(x^5,y^4)$, Time: 524.7 sec, Memory: 772.22MB}
\begin{tabular} {|l||l|l|l|l|l|l|l|l|l|l|l|l|}
\hline
$(m,n)$ & $0$ & $1$ & $2$ & $3$ & $4$ & $5$ & $6$ & $7$ & $8$ & $9$ & $10$ & $11$ \\ \hline \hline
$0$ &{\tiny $0$ } & {\tiny $0$ } & {\tiny $0$ } & {\tiny $0$ } & {\tiny $0$ } & {\tiny $0$ } & {\tiny $0$ } & {\tiny $0$ } & {\tiny $0$ } & {\tiny $0$ } & {\tiny $0$ } & {\tiny $0$ } \\ \hline
$1$ &{\tiny $0$ } & {\tiny $0$ } & {\tiny $0$ } & $1$ & $1$ & {\tiny $0$ }{\scriptsize $(2 \cdot 5)$} & {\tiny $0$ }{\scriptsize $(2)$} & {\tiny $0$ } & {\tiny $0$ } & {\tiny $0$ } & {\tiny $0$ } &  \\ \hline
$2$ &{\tiny $0$ } & {\tiny $0$ } & $1$ & $3$ & $3$ & {\tiny $0$ }{\scriptsize $(2^{2} \cdot 3 \cdot 4 \cdot 5)$} & {\tiny $0$ }{\scriptsize $(2^{2})$} & {\tiny $0$ } & {\tiny $0$ } & {\tiny $0$ } &  &  \\ \hline
$3$ &{\tiny $0$ } & $1$ & $3$ & $6$ & $5${\scriptsize $(4)$} & {\tiny $0$ }{\scriptsize $(2^{2} \cdot 4^{3} \cdot 3^{2})$} & {\tiny $0$ }{\scriptsize $(2^{2} \cdot 4)$} & {\tiny $0$ } & {\tiny $0$ } &  &  &  \\ \hline
$4$ &{\tiny $0$ } & $1$ & $3$ & $6$ & $5${\scriptsize $(4)$} & {\tiny $0$ }{\scriptsize $(2^{2} \cdot 4^{3} \cdot 3^{2})$} & {\tiny $0$ }{\scriptsize $(2^{2} \cdot 4)$} & {\tiny $0$ } &  &  &  &  \\ \hline
$5$ &{\tiny $0$ } & $1$ & $3$ & $5${\scriptsize $(5)$} & $4${\scriptsize $(4 \cdot 5)$} & {\tiny $0$ }{\scriptsize $(2 \cdot 4^{3} \cdot 3^{2})$} & {\tiny $0$ }{\scriptsize $(2 \cdot 4)$} &  &  &  &  &  \\ \hline
$6$ &{\tiny $0$ } & {\tiny $0$ }{\scriptsize $(2 \cdot 5)$} & {\tiny $0$ }{\scriptsize $(5^{3} \cdot 2 \cdot 4)$} & {\tiny $0$ }{\scriptsize $(5^{5} \cdot 2 \cdot 4^{2})$} & {\tiny $0$ }{\scriptsize $(5^{4} \cdot 2 \cdot 4^{2})$} & {\tiny $0$ }{\scriptsize $(4^{2})$} &  &  &  &  &  &  \\ \hline
$7$ &{\tiny $0$ } & {\tiny $0$ }{\scriptsize $(5)$} & {\tiny $0$ }{\scriptsize $(5^{2})$} & {\tiny $0$ }{\scriptsize $(5^{3})$} & {\tiny $0$ }{\scriptsize $(5^{2})$} &  &  &  &  &  &  &  \\ \hline
$8$ &{\tiny $0$ } & {\tiny $0$ } & {\tiny $0$ } & {\tiny $0$ } &  &  &  &  &  &  &  &  \\ \hline
$9$ &{\tiny $0$ } & {\tiny $0$ } & {\tiny $0$ } &  &  &  &  &  &  &  &  &  \\ \hline
$10$ &{\tiny $0$ } & {\tiny $0$ } &  &  &  &  &  &  &  &  &  &  \\ \hline
$11$ &{\tiny $0$ } &  &  &  &  &  &  &  &  &  &  &  \\ \hline
\end{tabular}
\end{table}

\begin{table} [H]
\caption{$N_4:\ \mathbb{Z} \langle x,y \rangle /(x^{101})$, Time: 878.2 sec, Memory: 753.88MB}
\begin{tabular} {|l||l|l|l|l|l|l|l|l|l|l|l|l|}
\hline
$(m,n)$ & $0$ & $1$ & $2$ & $3$ & $4$ & $5$ & $6$ & $7$ & $8$ & $9$ & $10$ & $11$ \\ \hline \hline
$0$ &{\tiny $0$ } & {\tiny $0$ } & {\tiny $0$ } & {\tiny $0$ } & {\tiny $0$ } & {\tiny $0$ } & {\tiny $0$ } & {\tiny $0$ } & {\tiny $0$ } & {\tiny $0$ } & {\tiny $0$ } & {\tiny $0$ } \\ \hline
$1$ &{\tiny $0$ } & {\tiny $0$ } & {\tiny $0$ } & $1$ & $1$ & $1$ & $1$ & $1$ & $1$ & $1$ & $1$ &  \\ \hline
$2$ &{\tiny $0$ } & {\tiny $0$ } & $1$ & $3$ & $3$ & $3$ & $3$ & $3$ & $3$ & $3$ &  &  \\ \hline
$3$ &{\tiny $0$ } & $1$ & $3$ & $6$ & $6$ & $6$ & $6$ & $6$ & $6$ &  &  &  \\ \hline
$4$ &{\tiny $0$ } & $1$ & $3$ & $6$ & $6$ & $6$ & $6$ & $6$ &  &  &  &  \\ \hline
$5$ &{\tiny $0$ } & $1$ & $3$ & $6$ & $6$ & $6$ & $6$ &  &  &  &  &  \\ \hline
$6$ &{\tiny $0$ } & $1$ & $3$ & $6$ & $6$ & $6$ &  &  &  &  &  &  \\ \hline
$7$ &{\tiny $0$ } & $1$ & $3$ & $6$ & $6$ &  &  &  &  &  &  &  \\ \hline
$8$ &{\tiny $0$ } & $1$ & $3$ & $6$ &  &  &  &  &  &  &  &  \\ \hline
$9$ &{\tiny $0$ } & $1$ & $3$ &  &  &  &  &  &  &  &  &  \\ \hline
$10$ &{\tiny $0$ } & $1$ &  &  &  &  &  &  &  &  &  &  \\ \hline
$11$ &{\tiny $0$ } &  &  &  &  &  &  &  &  &  &  &  \\ \hline
\end{tabular}
\end{table}

\begin{table} [H]
\caption{$N_4:\ \mathbb{Z} \langle x,y \rangle /(x^3,y^3)$, Time: 1730.05 sec, Memory: 1582.34MB}
\begin{tabular} {|l||l|l|l|l|l|l|l|l|l|l|l|l|}
\hline
$(m,n)$ & $0$ & $1$ & $2$ & $3$ & $4$ & $5$ & $6$ & $7$ & $8$ & $9$ & $10$ & $11$ \\ \hline \hline
$0$ &{\tiny $0$ } & {\tiny $0$ } & {\tiny $0$ } & {\tiny $0$ } & {\tiny $0$ } & {\tiny $0$ } & {\tiny $0$ } & {\tiny $0$ } & {\tiny $0$ } & {\tiny $0$ } & {\tiny $0$ } & {\tiny $0$ } \\ \hline
$1$ &{\tiny $0$ } & {\tiny $0$ } & {\tiny $0$ } & $1$ & {\tiny $0$ }{  $(2)$} & {\tiny $0$ } & {\tiny $0$ } & {\tiny $0$ } & {\tiny $0$ } & {\tiny $0$ } & {\tiny $0$ } &  \\ \hline
$2$ &{\tiny $0$ } & {\tiny $0$ } & $1$ & $3$ & {\tiny $0$ }{  $(2^{2} \cdot 3^{2})$} & {\tiny $0$ }{  $(3)$} & {\tiny $0$ } & {\tiny $0$ } & {\tiny $0$ } & {\tiny $0$ } &  &  \\ \hline
$3$ &{\tiny $0$ } & $1$ & $3$ & $4${  $(3)$} & {\tiny $0$ }{  $(3^{4} \cdot 2^{2})$} & {\tiny $0$ }{  $(3^{2})$} & {\tiny $0$ } & {\tiny $0$ } & {\tiny $0$ } &  &  &  \\ \hline
$4$ &{\tiny $0$ } & {\tiny $0$ }{  $(2)$} & {\tiny $0$ }{  $(2^{2} \cdot 3^{2})$} & {\tiny $0$ }{  $(3^{4} \cdot 2^{2})$} & {\tiny $0$ }{  $(3^{4} \cdot 2)$} & {\tiny $0$ }{  $(3^{2})$} & {\tiny $0$ } & {\tiny $0$ } &  &  &  &  \\ \hline
$5$ &{\tiny $0$ } & {\tiny $0$ } & {\tiny $0$ }{  $(3)$} & {\tiny $0$ }{  $(3^{2})$} & {\tiny $0$ }{  $(3^{2})$} & {\tiny $0$ }{  $(3)$} & {\tiny $0$ } &  &  &  &  &  \\ \hline
$6$ &{\tiny $0$ } & {\tiny $0$ } & {\tiny $0$ } & {\tiny $0$ } & {\tiny $0$ } & {\tiny $0$ } &  &  &  &  &  &  \\ \hline
$7$ &{\tiny $0$ } & {\tiny $0$ } & {\tiny $0$ } & {\tiny $0$ } & {\tiny $0$ } &  &  &  &  &  &  &  \\ \hline
$8$ &{\tiny $0$ } & {\tiny $0$ } & {\tiny $0$ } & {\tiny $0$ } &  &  &  &  &  &  &  &  \\ \hline
$9$ &{\tiny $0$ } & {\tiny $0$ } & {\tiny $0$ } &  &  &  &  &  &  &  &  &  \\ \hline
$10$ &{\tiny $0$ } & {\tiny $0$ } &  &  &  &  &  &  &  &  &  &  \\ \hline
$11$ &{\tiny $0$ } &  &  &  &  &  &  &  &  &  &  &  \\ \hline
\end{tabular}
\end{table}

\begin{table} [H]
\caption{$N_4:\ \mathbb{Z} \langle x,y \rangle /(x^3,y^4)$, Time: 912.87 sec, Memory: 789.53MB}
\begin{tabular} {|l||l|l|l|l|l|l|l|l|l|l|l|l|}
\hline
$(m,n)$ & $0$ & $1$ & $2$ & $3$ & $4$ & $5$ & $6$ & $7$ & $8$ & $9$ & $10$ & $11$ \\ \hline \hline
$0$ &{\tiny $0$ } & {\tiny $0$ } & {\tiny $0$ } & {\tiny $0$ } & {\tiny $0$ } & {\tiny $0$ } & {\tiny $0$ } & {\tiny $0$ } & {\tiny $0$ } & {\tiny $0$ } & {\tiny $0$ } & {\tiny $0$ } \\ \hline
$1$ &{\tiny $0$ } & {\tiny $0$ } & {\tiny $0$ } & $1$ & $1$ & {\tiny $0$ }{  $(2 \cdot 5)$} & {\tiny $0$ }{  $(2)$} & {\tiny $0$ } & {\tiny $0$ } & {\tiny $0$ } & {\tiny $0$ } &  \\ \hline
$2$ &{\tiny $0$ } & {\tiny $0$ } & $1$ & $3$ & $3$ & {\tiny $0$ }{  $(2^{2} \cdot 3 \cdot 4 \cdot 5)$} & {\tiny $0$ }{  $(2^{2})$} & {\tiny $0$ } & {\tiny $0$ } & {\tiny $0$ } &  &  \\ \hline
$3$ &{\tiny $0$ } & $1$ & $3$ & $5${  $(3)$} & $4${  $(3 \cdot 4)$} & {\tiny $0$ }{  $(2 \cdot 4^{3} \cdot 3^{2})$} & {\tiny $0$ }{  $(2 \cdot 4)$} & {\tiny $0$ } & {\tiny $0$ } &  &  &  \\ \hline
$4$ &{\tiny $0$ } & {\tiny $0$ }{  $(2)$} & {\tiny $0$ }{  $(2^{2} \cdot 3^{2})$} & {\tiny $0$ }{  $(3^{4} \cdot 2^{3})$} & {\tiny $0$ }{  $(3^{4} \cdot 2^{3})$} & {\tiny $0$ }{  $(2^{2} \cdot 3^{2})$} & {\tiny $0$ }{  $(2)$} & {\tiny $0$ } &  &  &  &  \\ \hline
$5$ &{\tiny $0$ } & {\tiny $0$ } & {\tiny $0$ }{  $(3)$} & {\tiny $0$ }{  $(3^{2})$} & {\tiny $0$ }{  $(3^{2})$} & {\tiny $0$ }{  $(3)$} & {\tiny $0$ } &  &  &  &  &  \\ \hline
$6$ &{\tiny $0$ } & {\tiny $0$ } & {\tiny $0$ } & {\tiny $0$ } & {\tiny $0$ } & {\tiny $0$ } &  &  &  &  &  &  \\ \hline
$7$ &{\tiny $0$ } & {\tiny $0$ } & {\tiny $0$ } & {\tiny $0$ } & {\tiny $0$ } &  &  &  &  &  &  &  \\ \hline
$8$ &{\tiny $0$ } & {\tiny $0$ } & {\tiny $0$ } & {\tiny $0$ } &  &  &  &  &  &  &  &  \\ \hline
$9$ &{\tiny $0$ } & {\tiny $0$ } & {\tiny $0$ } &  &  &  &  &  &  &  &  &  \\ \hline
$10$ &{\tiny $0$ } & {\tiny $0$ } &  &  &  &  &  &  &  &  &  &  \\ \hline
$11$ &{\tiny $0$ } &  &  &  &  &  &  &  &  &  &  &  \\ \hline
\end{tabular}
\end{table}

\begin{table} [H]
\footnotesize
\caption{$N_5:\ \mathbb{Z} \langle x,y \rangle /(x^3,y^4)$, Time: 912.87 sec, Memory: 789.53MB}
\begin{tabular} {|l||l|l|l|l|l|l|l|l|l|l|l|l|}
\hline
$(m,n)$ & $0$ & $1$ & $2$ & $3$ & $4$ & $5$ & $6$ & $7$ & $8$ & $9$ & $10$ & $11$ \\ \hline \hline
$0$ &{\tiny $0$ } & {\tiny $0$ } & {\tiny $0$ } & {\tiny $0$ } & {\tiny $0$ } & {\tiny $0$ } & {\tiny $0$ } & {\tiny $0$ } & {\tiny $0$ } & {\tiny $0$ } & {\tiny $0$ } & {\tiny $0$ } \\ \hline
$1$ &{\tiny $0$ } & {\tiny $0$ } & {\tiny $0$ } & {\tiny $0$ } & $1$ & $1$ & {\tiny $0$ }{  $(5)$} & {\tiny $0$ } & {\tiny $0$ } & {\tiny $0$ } & {\tiny $0$ } &  \\ \hline
$2$ &{\tiny $0$ } & {\tiny $0$ } & {\tiny $0$ } & $2$ & $5$ & $4${  $(2 \cdot 5)$} & {\tiny $0$ }{  $(2^{4})$} & {\tiny $0$ }{  $(2^{2})$} & {\tiny $0$ } & {\tiny $0$ } &  &  \\ \hline
$3$ &{\tiny $0$ } & {\tiny $0$ } & $2$ & $6$ & $9${  $(3)$} & $5${  $(2^{2} \cdot 3^{2} \cdot 4^{2})$} & {\tiny $0$ }{  $(2^{6} \cdot 3 \cdot 4 \cdot 5)$} & {\tiny $0$ }{  $(2^{3})$} & {\tiny $0$ } &  &  &  \\ \hline
$4$ &{\tiny $0$ } & $1$ & $4${  $(2)$} & $6${  $(2^{2} \cdot 3^{3})$} & $6${  $(2^{4} \cdot 3^{6})$} & $2${  $(2^{5} \cdot 3^{2} \cdot 4^{2})$} & {\tiny $0$ }{  $(2^{4} \cdot 3 \cdot 4)$} & {\tiny $0$ }{  $(2^{2})$} &  &  &  &  \\ \hline
$5$ &{\tiny $0$ } & {\tiny $0$ } & {\tiny $0$ }{  $(3^{2} \cdot 2)$} & {\tiny $0$ }{  $(3^{5} \cdot 2^{2})$} & {\tiny $0$ }{  $(3^{7} \cdot 2^{3})$} & {\tiny $0$ }{  $(3^{5} \cdot 2^{3})$} & {\tiny $0$ }{  $(2^{2} \cdot 3^{2})$} &  &  &  &  &  \\ \hline
$6$ &{\tiny $0$ } & {\tiny $0$ } & {\tiny $0$ }{  $(3)$} & {\tiny $0$ }{  $(3^{2})$} & {\tiny $0$ }{  $(3^{3})$} & {\tiny $0$ }{  $(3^{2})$} &  &  &  &  &  &  \\ \hline
$7$ &{\tiny $0$ } & {\tiny $0$ } & {\tiny $0$ } & {\tiny $0$ } & {\tiny $0$ } &  &  &  &  &  &  &  \\ \hline
$8$ &{\tiny $0$ } & {\tiny $0$ } & {\tiny $0$ } & {\tiny $0$ } &  &  &  &  &  &  &  &  \\ \hline
$9$ &{\tiny $0$ } & {\tiny $0$ } & {\tiny $0$ } &  &  &  &  &  &  &  &  &  \\ \hline
$10$ &{\tiny $0$ } & {\tiny $0$ } &  &  &  &  &  &  &  &  &  &  \\ \hline
$11$ &{\tiny $0$ } &  &  &  &  &  &  &  &  &  &  &  \\ \hline
\end{tabular}
\end{table}

\section{Acknowledgements}
We would like to thank P. Etingof for suggesting and advising the project and conjectures, and T. Khovanova for a careful reading of a draft of this paper and suggestions to improve the presentation. Y.F. acknowledges support from Marie Curie Grant IOF-hqsmcf-274032 and from the Max Planck Institute for Mathematics. This research was partly supported by the NCCR SwissMAP, funded by the Swiss National Science Foundation. I. X. thanks the PRIMES program.

\pagebreak
\bibliographystyle{alpha}
\bibliography{paper}

\end{document}